\documentclass[12pt]{amsart}
\usepackage{amsmath,amsthm,amsfonts,amssymb,eucal, hyperref,braket,enumitem}
\usepackage{dsfont}
\usepackage{color}

\newcommand{\<}{\langle}
\renewcommand{\>}{\rangle}

\renewcommand{\k}{\kappa}

\renewcommand{\l}{\lambda}

\newcommand{\T}{\Theta}

\newcommand{\oq}{\ {\raise 7pt\hbox{${\scriptstyle\circ}$}}
\kern -7pt{
\hbox{$Q$}}}

\newcommand{\R}{ \mathbb R}

\newcommand{\N}{ \mathbb N}
\newcommand{\I}{ \mathbb I}

\renewcommand{\T}{\mathbb T}

\newcommand {\bb}{\mathbf b}

\newcommand {\bk}{\mathbf k}

\newcommand {\bm}{\mathbf m}
\newcommand {\bl}{\mathbf l}
\newcommand {\bze}{\mathbf 0}

\newcommand {\bn}{\mathbf n}

\newcommand{\cP}{\mathcal P}
\newcommand{\cQ}{\mathcal Q}

\newcommand{\corr}{\mathrm{corr}}

\newcommand{\1}
{{\,\vrule depth3pt height9pt}{\vrule depth3pt height9pt}
{\vrule depth3pt height9pt}{\vrule depth3pt height9pt}\,}

\DeclareMathOperator {\dist} {{dist}}

\newtheorem{thm}{Theorem}[section]
\newtheorem{cor}[thm]{Corollary}
\newtheorem{cla}[thm]{Claim}
\newtheorem{lem}[thm]{Lemma}
\newtheorem{prop}[thm]{Proposition}

\theoremstyle{definition}
\newtheorem{defn}[thm]{Definition}
\newtheorem{example}[thm]{Example}

\newtheorem{rem}[thm]{Remark}

\numberwithin{equation}{section}

\newcommand{\bee}{\begin{equation}}
\newcommand{\ene}{\end{equation}}
\newcommand{\bees}{\begin{equation*}}
\newcommand{\enes}{\end{equation*}}
\newcommand{\bes}{\begin{split}}
\newcommand{\ens}{\end{split}}

\newcommand{\bet}{\begin{thm}}
\newcommand{\ent}{\end{thm}}
\newcommand{\bel}{\begin{lem}}
\newcommand{\enl}{\end{lem}}
\newcommand{\bec}{\begin{cor}}
\newcommand{\enc}{\end{cor}}
\newcommand{\becl}{\begin{cla}}
\newcommand{\encl}{\end{cla}}
\newcommand{\bep}{\begin{proof}}
\newcommand{\enp}{\end{proof}}
\newcommand{\ber}{\begin{rem}}
\newcommand{\enr}{\end{rem}}
\newcommand{\ep}{\varepsilon}

\newcommand{\Z}{\mathbb Z}

\renewcommand{\l}{\left}
\renewcommand{\r}{\right}

\makeatletter
\def\square{\RIfM@\bgroup\else$\bgroup\aftergroup$\fi
  \vcenter{\hrule\hbox{\vrule\@height.6em\kern.6em\vrule}\hrule}\egroup}
\makeatother

 \usepackage{hyperref}

\def\dio{\mathrm{dio}}

\def\cont{\mathtt{Cont}}

\newcommand{\one}{\mathbf{1}}

\usepackage[margin=1.1in]{geometry}

\begin{document}

\title[Pila]
{Smooth perturbation theory and generic Cantor spectrum for multi-dimensional monotone quasiperiodic operators}
\author[I. Kachkovskiy]
{Ilya Kachkovskiy}
\address{Department of Mathematics\\ Michigan State University\\
Wells Hall, 619 Red Cedar Rd\\ East Lansing, MI\\ 48824\\ USA}
\email{ikachkov@msu.edu}
\author[L. Parnovski]
{Leonid Parnovski}
\address{Department of Mathematics\\ University College London\\
Gower Street\\ London\\ WC1E 6BT\\ UK}
\email{l.parnovski@ucl.ac.uk}
\author[R. Shterenberg]
{Roman Shterenberg}
\address{Department of Mathematics\\ University of Alabama, Birminghan\\
University Hall\\10th Ave. S.\\ Birmingham, AL\\ 35294\\USA }
\email{shterenb@uab.edu}

\date{\today}

\begin{abstract}
We consider multi-dimensional quasiperiodic Schr\"odinger operators with monotone sawtooth-type potentials in the perturbative localization regime. We prove that a given gap is closed iff a corresponding (via a natural gap labeling) eigenfunction vanishes at the origin. Using this, we show that for a generic choice of the sampling function $f$, the spectrum of such an operator is a Cantor set, with all possible gaps being open. We also provide a class of functions $f$ for which some gaps are closed. Finally, we observe that certain gaps are more difficult to close than the others.

The proof uses an approach to Rayleigh--Schr\"odinger perturbation theory that provides quantitative estimates of the remainder and can deal with dense point spectra.
\end{abstract}
\maketitle
\section{Introduction}

\subsection{The setting and motivation} Let $f\colon \R\to \R$ be a $1$-periodic function. In this paper, we consider {\it quasiperiodic Schr\"odinger operators} $H(x)=H_f(x)$ acting on $\ell^2(\Z^d)$
\bee
\label{eq_h_def_intro}
(H(x)\psi)(\bn)=\ep(\Delta\psi)(\bn)+V(\bn)\psi(\bn),\quad V(\bn)=f(x+\bn\cdot\omega),\quad \bn\in \Z^d,
\ene
where $\omega\in \R^d$ is a frequency vector with $\{1,\omega_1,\ldots,\omega_d\}$ rationally independent and $x\in \T$ is the quasiperiodic phase. One of the main questions of spectral theory of such operators is what does the spectrum of $H(x)$ look like as a set; in particular, whether it is a Cantor set (i.e. whether it has an empty interior). Depending on the particular class of functions $f$ and frequency $\omega$, there are many different versions on this question, starting from the {\it Ten Martini Problem}. Most of these results are obtained for the case $d=1$ (see, for example, \cite{ABD,DL,ten_martini,ten_martini_dry,beckus,KPS2,BF,Ge2,Li} and references therein). The intuition from the multi-frequency case (\cite{ChuSi,GSW}) suggests that, in the case of analytic $f$, Cantor spectrum for $d\ge 2$ is less likely than for $d=1$. Indeed,
known examples of multi-dimensional operators with Cantor spectrum mostly rely on separable potentials (\cite{HLQZ,DFG}) and are obtained by finding a one-dimensional example with exceptionally ``thin'' spectrum, so that the algebraic sum of the corresponding Cantor sets is still a Cantor set. However, such spectrum is usually considered to be very exotic.

\vskip 1mm

Our main goal is to find a big class of multi-dimensional operators of the form \eqref{eq_h_def_intro} that have Cantor spectra for reasons that cannot be obviously reduced to one-dimensional phenomena. Results of \cite{KPS2} and \cite{KPS3} suggest that bounded {\it Lipschitz monotone} functions
\bee
\label{eq_monotonicity_intro}
f(y)-f(x)\ge y-x,\quad\text{for}\quad 0\le x<y<1
\ene
provide natural candidates for such a class. We will consider {\it sawtooth-type} potentials, i.e. Lipschitz monotone potentials that also satisfy
\bee
\label{eq_sawtooth_intro}
-\infty<f(0)=f(0+0)<f(1-0)<+\infty,\quad f\text{ is continuous on } [0,1),
\ene
for example $f(x)=\{x\}$, in the regime of {\it Diophantine frequency vectors}
\bee
\label{eq_diophantine_intro}
\dist(\bn\cdot\omega,\Z)\ge C_\dio|\bn|^{-\tau_\dio}.
\ene
In \cite{KPS2}, it was shown that, for $d=1$, Cantor spectrum in the setting \eqref{eq_h_def_intro} -- \eqref{eq_diophantine_intro} can be deduced from Anderson localization satisfying a stability property under a certain class of rank one perturbations. Non-perturbative results \cite{JK2} enjoy such stability, and this allowed us to prove that the spectrum is Cantor for $d=1$ and all $\ep>0$. Besides localization, the proof relies on the fact that a cyclic subspace of a one-dimensional Schr\"odinger operator associated to $e_\bze$ cannot be too large.

We can now state one of the main results of the present paper, although we postpone the details on some terminology to later parts of it.
\begin{thm}
\label{th_main_loose}
Fix $\omega$ satisfying \eqref{eq_diophantine_intro} and $0<\ep<\ep_0(C_\dio,\tau_\dio,d)$. Then, for a generic subset of functions $f$ within the class $\eqref{eq_monotonicity_intro}$, \eqref{eq_sawtooth_intro}, the spectrum of the operator \eqref{eq_h_def_intro}  is a Cantor set. Moreover, all possible spectral gaps are open.
\end{thm}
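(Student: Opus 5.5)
Since $\{1,\omega_1,\ldots,\omega_d\}$ is rationally independent, the spectrum $\Sigma_f$ does not depend on $x$. By gap labelling, every gap of $\Sigma_f$ carries a label, and for monotone $f$ the integrated density of states takes values in $\{\bn\cdot\omega \bmod 1\}$. Since $\bn\cdot\omega$ is dense, Cantor spectrum is the same as saying that every gap labelled by $\bn\in\Z^d$ is open. So the strategy is to show that, for each fixed $\bn$, the set
\[
\mathcal G_\bn=\{f:\ \text{the gap labelled by }\bn\text{ is open}\}
\]
is open and dense in the class \eqref{eq_monotonicity_intro}, \eqref{eq_sawtooth_intro}, equipped with a suitable topology (sup-norm together with the Lipschitz-monotonicity constraint). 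The generic set is then $\bigcap_{\bn}\mathcal G_\bn$, which is a countable intersection of open dense sets. Openness of $\mathcal G_\bn$ should be easy: spectra move continuously in Hausdorff distance under norm perturbations of $V$. One has to check that the label is stable, i.e.\ that the IDS value at the gap is locally constant. I expect this to follow from continuity of the IDS in $f$ at energies inside a gap.

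For density, I would use the criterion announced in the abstract. At $x$ near the discontinuity of $f$, the operators $H(x-0)$ and $H(x+0)$ differ by a rank one perturbation at the site where $x+\bn\cdot\omega$ crosses an integer. In the perturbative localization regime $\ep<\ep_0$, Rayleigh--Schr\"odinger expansions give eigenfunctions $\psi_\bm(x)$ with eigenvalues $E_\bm(x)$ that are monotone in $x$. The gap with label $\bn$ is the interval between the limits $E_\bm(1-0)$ and $E_{\bm'}(0+0)$ of consecutive eigenvalue branches. It is closed iff the corresponding eigenfunction satisfies $\psi(\bze)=0$ at the jump. This is the key structural statement, which I assume to be proved earlier via the quantitative perturbation theory.

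Given this criterion, density is proved by perturbing $f$. Suppose $f\notin\mathcal G_\bn$, so that $\psi(\bze)=0$ for the relevant eigenfunction. Replace $f$ by $f_t=f+t g$, where $g$ is a small monotone-preserving perturbation, for example $g(y)=y$ near $0$ or a bump supported near one relevant value $\{x+\bm\cdot\omega\}$. The aim is to show that $t\mapsto\psi_t(\bze)$ is real-analytic, or at least nonconstant, for small $t$. First-order perturbation theory gives
\[
\partial_t\psi_t(\bze)=\sum_{\bk}\frac{\<\psi_\bk,g\psi\>\,\psi_\bk(\bze)}{E-E_\bk}.
\]
One then chooses $g$ localized near the point $x+\bm\cdot\omega$ at which $\psi$ has its main mass. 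Doing so makes this derivative dominated by a single term, which is nonzero because $\psi_\bm$ is close to $e_\bm$ in the localized regime. Hence $\psi_t(\bze)\neq 0$ for arbitrarily small $t\neq0$, and $f_t\in\mathcal G_\bn$.

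The main obstacle is making this derivative estimate rigorous despite the dense point spectrum. The small denominators $E-E_\bk$ have to be controlled by the Diophantine condition \eqref{eq_diophantine_intro} together with the monotonicity of $f$, which gives $|E_\bm-E_\bk|\gtrsim\dist((\bm-\bk)\cdot\omega,\Z)$. The exponential decay of the eigenfunctions must beat the polynomial loss $|\bm-\bk|^{\tau_\dio}$. I would first try to get analyticity in $t$ of the whole Rayleigh--Schr\"odinger series uniformly, and conclude from it that the zero set in $t$ is discrete.
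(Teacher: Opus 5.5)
Your overall architecture (each $\mathcal G_\bn$ open, density through the criterion $\psi_\bn(0;\bze)\neq0$ of Theorem~\ref{th_gap_labeling}, then a Baire intersection) is the one the paper uses in Remark~\ref{remark_generic}. The differentiability you flag as the main obstacle is not the hard part. Proposition~\ref{prop_main_kps3} applied to $f+tg$ already gives a uniformly localized branch, and Theorem~\ref{th_smoothness} turns this into $C^\infty$ dependence on $t$ (Corollary~\ref{cor_smoothness_g}) without any convergence of the series, which the paper does not establish.

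The genuine gap is the non-degeneracy step. With a narrow bump at $\bn\cdot\omega$ one has $g\psi_\bn\approx\psi_\bn(\bn)e_\bn$. So your derivative equals $-\psi_\bn(\bn)\<R_\bn e_\bn,e_\bze\>$, up to an error that vanishes with the bump width. Closeness of $\psi_\bn$ to $e_\bn$ only controls the first factor. The second factor is $\sum_{\bk\neq\bn}\psi_\bk(\bn)\psi_\bk(\bze)/(E_\bk-E_\bn)$, and nothing singles out one term:
\begin{itemize}
\item every $\bk$ in the lattice box spanned by $\bze$ and $\bn$ can contribute at the same order $\ep^{|\bn|_1}$;
\item Proposition~\ref{prop_main_kps3} gives only upper bounds on off-diagonal entries;
\item the values $\psi_\bk(\cdot)$ may themselves vanish, which is exactly the phenomenon under study.
\end{itemize}
Heuristically, to leading order in $\ep$ this matrix element is the derivative in $d_\bn=f(\bn\cdot\omega)$ of the lattice-path sum giving the leading term of $\psi_\bn(0;\bze)$. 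Examples~\ref{example_label12} and~\ref{example_closing111} show such sums can be tuned to vanish, and nothing in your argument prevents the derivative from vanishing at the same time. The analyticity fallback does not help either. Even an analytic $t\mapsto\psi_t(\bze)$ needs some non-zero Taylor coefficient, which is the same unproved non-degeneracy.

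The missing idea is to avoid depending on the single matrix element $\<R_\bn e_\bn,e_\bze\>$. The paper uses two lemmas:
\begin{itemize}
\item Lemma~\ref{lemma_make_nonzero}: a narrow bump at $\bk\cdot\omega$ yields $\partial_t\psi_\bn(\bl)\neq0$ whenever $\psi_\bn(\bk)\neq0$ and $\<R_\bn e_\bk,e_\bl\>\neq0$.
\item Lemma~\ref{lemma_cyclic}: the graph on $\Z^d$ with an edge between $\bk$ and $\bl$ iff $\<R_\bn e_\bk,e_\bl\>\neq0$ is connected. Otherwise the span of a component would reduce $R_\bn$, hence $H$ (a Borel function of $R_\bn$, since the spectrum is simple), contradicting $\<H^{|\bk-\bl|_1}e_\bk,e_\bl\>\neq0$.
\end{itemize}
One then walks from $\bn$, where $\psi_\bn(\bn)\approx1$, to $\bze$ along such a path, applying a small bump perturbation at each successive vertex. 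At each step $t$ is chosen so small that the non-zero values already created and the remaining edges of the path survive (continuity in $t$, Lemma~\ref{lemma_iteration}). For your Baire scheme one finite chain for each fixed $\bn$ suffices, so you do not need the infinite iteration of Theorem~\ref{th_main_cantor}.

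Also, a bump perturbation can destroy the monotonicity constant $1$. As in Remark~\ref{remark_generic}, first replace $f$ by $(1+s)f$ with small $s>0$.
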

If we want to extend the proof in \cite{KPS2} of the one-dimensional case to the multi-dimensional setting, we are faced with the following questions:
\begin{itemize}
    \item Is there a multi-dimensional localization result that is stable under a class of rank one perturbations that preserve monotonicity condition \eqref{eq_monotonicity_intro}?
	\item The structure of the cyclic subspace associated to $e_\bze$ plays the key role in establishing Cantor spectrum and is closely related to the vanishing of the eigenfunctions of the operator $H(0)$ at the origin. Can one make a more precise statement relating individual eigenfunctions with individual spectral gaps?
	\item Having a very small cyclic subspace associated to $e_\bze$ should be a rare phenomenon (if it ever happens). Can one somehow quantify that rareness in terms of the function $f$?
\end{itemize}
The first question was answered in \cite{KPS3} (see also \cite{Shi}) in a perturbative regime $0<\ep<\ep_0(C_\dio,\tau_\dio,d)$, which is the reason why we restrict ourselves to the perturbative setting here --- see  Proposition \ref{prop_main_kps3} below. Our goal is to address the remaining two questions. The study of the eigenvalues appearing in spectral gaps after applying a rank one perturbation, started in \cite{KPS3}, suggests that the presence of a particular spectral gap is related to non-vanishing of a related eigenvector at the origin. Thus we arrive at the following question: can one control the values of eigenvectors at the origin by making small perturbations of $f$? 

For the sake of obtaining some heuristic intuition, let us momentarily assume that the corresponding eigenvalue is isolated. Then, using some abstract spectral arguments, it is possible to show (see Remark \ref{remark_heuristic}) that one can change the value of the associated eigenvector at the origin from zero to non-zero by a finite sequence of carefully chosen small perturbations of $f$, each of which modifies its value at one point. There are two problems with this approach though: firstly, the eigenvalues of the quasiperiodic operators with sawtooth-type potentials are never isolated and, thus, the traditional perturbation theory is not applicable. Secondly, modifying the value of $f$ at finitely many points would change $f$ into a new function, say $f_1$, that would no longer be monotone and, therefore, the spectrum of $H_{f_1}$ would no longer have to be pure point, which would make applying the perturbation theory to it even more questionable.

The present paper is the result of our attempts to make the above ideas rigorous. Our previous results \cite{KPS} show that in some cases one can  apply analytic Rayleigh -- Schr\"odinger perturbation theory to dense point spectra and obtain non-trivial results on Anderson localization. However, the methods of \cite{KPS} rely heavily on the Maryland-type structure, which is known not to have any spectral gaps \cite{KPS3}. On the other hand, the heuristic gap opening arguments do not require complete analyticity, since they only involve calculations of finitely many derivatives. As a result, we arrive at the question of controlling the remainder in a partial sum of the Rayleigh -- Schr\"odinger perturbation series. Obtaining such control leads to progress in some of the above questions, as well as provides more general smoothness results that may be of independent interest. The following can be considered the main results of the present paper, explained in more detail in the next two subsections.
\begin{itemize}
    \item An abstract version of Rayleigh -- Schr\"odinger perturbation theory (Theorem \ref{th_canonical_expansion}), which requires minimal assumptions on the unperturbed operator and provides diagram representations for the perturbation series and the remainder.
    \item A gap labeling result (Theorem \ref{th_gap_labeling}) for sawtooth-type monotone quasiperiodic operators in the perturbative regime, relating, in particular, the opening of  individual spectral gaps with zeros of corresponding eigenvectors.
    \item The existence of three types of spectral gaps in the multidimensional setting (see Subsection \ref{subsection_principal_gaps}): the gaps with labels associated to coordinate axes which eventually open as $\ep\to 0$ (principal gaps), the gaps whose labels have only two non-zero entries $\pm1$, which also eventually open as a consequence of monotonicity of $f$ (semi-principal gaps), and the remaining non-principal gaps, which may be closed for arbitrarily small values of $\ep$.
    \item Generic Cantor spectrum (Theorem \ref{th_main_cantor}, see also Remark \ref{remark_generic} for the discussion of the associated topologies): under the assumptions \eqref{eq_h_def_intro} -- \eqref{eq_diophantine_intro} and $0<\ep<\ep_0(C_\dio,\tau_\dio)$, for a dense $G_{\delta}$ subset of functions $f$ within the class \ref{eq_monotonicity_intro}, all possible spectral gaps of the corresponding operator are open, in the sense of the labeling established above.
\end{itemize}
\subsection{Results on monotone quasiperiodic operators} Throughout this paper, we assume \eqref{eq_h_def_intro} -- \eqref{eq_diophantine_intro}: that is, Diophantine $\omega$, sawtooth-type function $f$, and $0<\ep<\ep_0(C_\dio, \tau_\dio,d)$, which is the perturbative regime considered in \cite{KPS3} (see also \cite{Shi}). A summary of the associated localization results is provided in Proposition \ref{prop_main_kps3} which, informally, states that there are eigenvalue-eigenvector branches 
\bee
\label{eq_eigenvectors_intro}
H(x)\psi_\bn(x)=E_\bn(x)\psi_\bn(x),
\ene
labeled by lattice points $\bn\in \Z^d$, where $E_\bn(x)$ and $\psi_\bn(x)=\psi_\bn(x,\cdot)$ are uniformly small perturbations, respectively, of the diagonal entries $f(x+\bn\cdot\omega)$ and the standard basis vectors $e_\bn$. In particular, one has a very strong kind of uniform localization.
\vskip 1mm
{\noindent\it $\circ$ Gap labeling and vanishing of the eigenfunctions.} The general theory of gap labeling (the reader is referred to a comprehensive review \cite{DF_label}) predicts that, in the sawtooth-type case, the value of the integrated density of states inside of a spectral gap must be equal to $\{\bn\cdot\omega\}$, where $\bn\in \Z^d$ is called the label of the gap. This prediction is confirmed as part of our Theorem \ref{th_gap_labeling}, although this is done mostly for the lack of references that explicitly cover the class of operators we consider here. Our first main result is Claim 2 in Theorem \ref{th_gap_labeling}, which states that {\it the gap with label $\bn$ is open if and only if the eigenfunction with label $\bn$ and zero phase does not vanish at the origin, i.e. if and only if $\psi_\bn(0,\bze)\ne 0$}. The proof completes the ideas of analyzing eigenvalues in spectral gaps under a class of rank one perturbations, earlier developed in \cite{KPS2}. As a result of this analysis, our Theorem \ref{th_measure_of_spectrum} states that the measure of the spectrum of an operator with a sawtooth-type potential, under the same assumptions, is equal to $f(1-0)-f(0)$, the same as for the operator with $\ep=0$.
\vskip 1mm

{\noindent\it $\circ$ Smoothness in $\ep$ and opening of the principal gaps.} In the earlier work \cite{KPS}, a result similar to Proposition \ref{prop_main_kps3} is obtained by showing the convergence of the Rayleigh--Schr\"odinger perturbation series in a neighborhood of the origin in $\ep$. However, it is only proved for a class of Maryland-type potentials, the kind that never has any spectral gaps. The corresponding eigenvalues and eigenvectors \eqref{eq_eigenvectors_intro} in that case are analytic in $\ep$ in the region of convergence (see also \cite{Bellissard}). While the conclusion of Proposition \ref{prop_main_kps3} somewhat resembles perturbation theory, the actual procedure does not allow to recover any regularity properties in the parameter $\ep$, besides uniform bounds. Our abstract Theorem \ref{th_smoothness} (see also Corollary \ref{cor_uniform_smoothness}) allows to extract infinite smoothness in $\ep$ as a consequence of uniform localization bounds that are contained in Proposition \ref{prop_main_kps3}. Moreover, the derivatives at $\ep=0$ of the corresponding objects, not surprisingly, are equal to the coefficients of the formal Rayleigh--Schr\"odinger series, which leads to Corollary \ref{cor_principal_gaps} stating that every gap with a label of the form $(0,\ldots,0,n,0,\ldots,0)=n\bb_j$ (we will be calling them {\it principal labels}) will be forced to open for small $\ep$ (depending on $n$). The same opening property, but with a stronger reliance on monotonicity, extends to gaps with {\it semi-principal labels} $\pm\bb_i\pm \bb_j$, $i\neq j$. Examples \ref{example_label12} and \ref{example_closing111} demonstrate that gaps with non-principal labels may close for arbitrarily small $\ep$. Although quite simple, the observation that gaps corresponding to principal and non-principal labels behave differently and that it is much easier to close a non-principal gap, seems to be never made before. Since all labels are principal if $d=1$, this distinction is a purely multi-dimensional phenomenon. We believe this distinction should persist for more general classes of $f$.
\vskip 1mm
{\noindent\it $\circ$  Smoothness in $f$.} Another consequence of the general theory is that, in the setting of Proposition \ref{prop_main_kps3}, the eigenvalues and eigenvectors \eqref{eq_eigenvectors_intro} are smooth in $f$, considered as a parameter, in the Gateaux sense. More precisely, for any $g\in C^1$, the function $f+tg$ will be Lipschitz monotone for small $t$, and therefore Proposition \ref{prop_main_kps3} would be applicable for $H_{f+tg}$. Our Corollary \ref{cor_smoothness_g} states that the corresponding eigenvalues and eigenvector branches will be infinitely smooth in $t$ at the origin.

\vskip 1mm
{\noindent\it $\circ$ Cantor spectrum for generic $f$.} The previous smoothness results allows us to control, in a quantitative way, what will happen with eigenfunctions under small $C^1$ perturbations of $f$. Theorem \ref{th_main_cantor}, which is one of the main results of the paper, shows that for every sawtooth-type $f$ one can find $g\in C^1$ with arbitrarily small norm, such that the operator with $f+tg$ will have Cantor spectrum in the strongest possible sense (that is, for every $\bn$ the gap with label $\bn$ will be open). In particular, it shows that {\it the set of functions $f$ with Cantor spectrum is a dense $G_{\delta}$ subset of the set of all sawtooth-type functions with $C^k$ topology}, see Remark \ref{remark_generic}. It is also not hard to modify the argument in order to consider small infinitely smooth perturbations.

\subsection{Smooth perturbation theory}A typical result of analytic perturbation theory \cite{Kato} is that an isolated eigenvalue of a one-parametric analytic operator family is (locally) analytic with respect to the same parameter. In the case of linear dependence on the parameter, such as $\ep$ in \eqref{eq_h_def_intro}, the corresponding Taylor series at the origin is known as the {\it Rayleigh -- Schr\"odinger series} (RSS). In physics literature, the coefficients of these series are typically represented by diagrams \cite{Bloch} and are usually model-specific.

\vskip 1mm

Applications of RSS beyond the case of analytic isolated eigenvalues (perhaps, beyond first order perturbation theory such as in Hellmann--Feynmann theorem) are very scarce. In the previous work \cite{KPS,KKPS}, it is shown that RSS converge for a class of Maryland-type quasiperiodic operators, thus providing an example of analytic perturbation theory successfully applied to dense point spectrum and essentially reaching the limit of our knowledge of the literature.

\vskip 1mm

The proof of Theorem \ref{th_main_cantor} is based on a formal perturbative calculation, stating that perturbing the potential at one point can force an eigenvector to change its value from zero to non-zero, thus potentially opening a gap. The main issue when applying this argument is that such a perturbation is quite likely to destroy pure point spectrum, thus making the whole idea moot. A more careful approach would be a perturbation of $f$ in the form of bump function. However, the main challenge is to make these heuristic arguments quantitative.

\vskip 1mm
The result of the above motivation is a new derivation of Rayleigh--Schr\"odinger perturbation theory with minimal assumptions, see (A1)--(A3) in Subsection \ref{subsection_canonical_expansions}. One can see that it only relies on the unperturbed eigenvalue being simple, the perturbed eigenvector being not completely orthogonal to the unperturbed one, and the actual terms of the perturbation series being well-defined. The main result of this section is Theorem \ref{th_canonical_expansion} which provides expansions for the perturbed eigenvalues and eigenvectors up to any order. These expansions correctly reproduce any number of terms of the original Rayleigh -- Schr\"odinger series and are expressed in the language of diagrams. The advantage of these expansions is that they are always true under the above minimal assumptions and do rely neither on the smallness of the perturbation, nor on any relation between the perturbed and unperturbed eigenvalues, besides the fact that the eigenvectors should not be orthogonal.

\vskip 1mm
Of course, the only reason why such a general result would be true can be contained in the remainder term. Note that the term has correct power of $\ep$ in front of it. The remainder is also expressed in terms of the diagrams. However, the contribution of these diagrams involve both perturbed and unperturbed eigenvector. The choice of wrong eigenvalue/eigenvector branch can (and will) lead to the remainder blowing up, see Remark \ref{remark_arbitrary_eigenfunction}. Still, there is a class of cases considered in Theorem \ref{th_smoothness} and Corollary \ref{cor_uniform_smoothness} where this remainder can be meaningfully estimated. These results can be summarized as follows: {\it existence of an eigenvalue/eigenvector branch with a certain kind of uniform localization without any additional regularity assumptions automatically implies infinite smoothness of this branch}. While the result is stated in an abstract form, for us its main application is in the setting of Proposition \ref{prop_main_kps3}, see Corollaries \ref{cor_principal_gaps} and \ref{cor_smoothness_g}.
\subsection{Structure of the paper}

In Section 2, we discuss the above-mentioned abstract versions of Rayleigh -- Schr\"odinger perturbation theory. The main idea is based on iterating the Lippmann--Schwinger equation \eqref{eq_psi_recursion} and collecting all terms that involve the original eigenvector $\psi_0$ in the terms of highest order. In Subsection 2.3 we introduce the diagram language in the abstract setting and obtain Theorem \ref{th_canonical_expansion} as the main result of that section and the main abstract result of the paper. In Subsection 2.4 we consider the case of an unperturbed operator being diagonal, similar to \cite{KPS}, and further develop the expansion in terms of matrix elements of the operator $A$, again with explicit diagram representations for the remainder.

In Section 3, we state several general consequences, the main ones being Theorem \ref{th_smoothness} and Corollary \ref{cor_uniform_smoothness}, which demonstrate how uniform localization bounds in a relatively abstract setting can be converted into infinite smoothness of eigenvalue/eigenvector branches. In Subsection 3.2 we include some first order calculations in more detail.

In Section 4, we introduce the perturbative setting of monotone quasiperiodic operators following \cite{KPS3} in Proposition \ref{prop_main_kps3} and state the smoothness results contained in Corollaries \ref{cor_principal_gaps}, \ref{cor_smoothness_g}, as well as Theorem \ref{th_gap_labeling}, which is one of the main results of the paper demonstrating the relation between spectral gaps and zeros of the eigenfunctions. In subsection 4.4, we discuss the difference between the gaps corresponding to principal/non-principal labels.

In Section 5, we state and prove the main concrete result of the paper, Theorem \ref{th_main_cantor}, which shows that one can perturb any sawtooth-type potential in order to open all the gaps predicted by Theorem \ref{th_gap_labeling}. The perturbation is constructed as a sequence of careful steps involving bump functions, and the differentiability results from Section 3 play crucial role here.

\subsection{Acknowledgements}I. K. was supported by the NSF grants DMS--1846114, DMS--2052519, and the 2022 Sloan Research Fellowship. L. P. was partially supported by the EPSRC grant EP/V051636/1 and the Leverhulme Trust grant RPG-2023-325. R. S. was supported by the NSF grant DMS-2306327.


\section{Smooth Rayleigh -- Schr\"odinger perturbation theory} 

\subsection{The setting}Let $\mathcal H$ be a Hilbert space, $D$ be a linear self-adjoint (not necessarily bounded) operator on $\mathcal H$, and $A$ be a bounded self-adjoint operator. Assume that $E_0$ is a simple eigenvalue of $D$ with eigenvector $\psi_0$. Suppose that $\psi$ is an eigenvector of $D+\ep A$ that is not orthogonal to $\psi_0$, normalized as follows:
\bee
\label{eq_psi_def}
(D+\varepsilon A)\psi=E\psi,\quad \<\psi,\psi_0\>=1.
\ene
A typical situation in perturbation theory is when $E_0$ is isolated from the rest of the spectrum. In this case, \eqref{eq_psi_def} defines a unique eigenvalue/eigenvector branch, analytic in $\ep$ in some neighborhood of the origin. The corresponding power series is known as the {\it Rayleigh -- Schr\"odinger perturbation series} (RSS). The coefficients of RSS are independent of $\ep$, and each coefficient can be represented as a finite sum over certain kinds of diagrams.

In the case when $E_0$ is not isolated, but the expressions involved in the definitions of the diagrams make sense, RSS can be considered as a formal power series in $\ep$, and the question of its convergence can be delicate \cite{KPS}.

The goal of the present section is to represent $E$ and $\psi$, considered above, as partial sums of the corresponding RSS with the addition of a remainder term. Both partial sums and the remainder will be represented in the form of diagrams. While these representations would not, in general, imply convergence of the infinite series, they can still be used in order to establish smoothness of $E$, $\psi$, and associated quantities.

\subsection{Canonical expansions}\label{subsection_canonical_expansions}
As above, assume that $D$ is a self-adjoint operator on $\mathcal H$, satisfying the following conditions:
\begin{itemize}
    \item [(A1)]$E_0$ is a simple eigenvalue of $D$ with an eigenvector $\psi_0$:
$$D\psi_0=E_0\psi_0,\quad \ker(D-E_0)=\mathrm{span}\{\psi_0\}
$$
\item [(A2)] $\psi$ is an eigenvector of $D+\ep A$ that is not orthogonal to $\psi_0$, normalized as in \eqref{eq_psi_def}:
$$
(D+\varepsilon A)\psi=E\psi,\quad \<\psi,\psi_0\>=1.
$$
\end{itemize}
Denote by $P$ and $R$, respectively, the projection onto the orthogonal complement of $\psi_0$ and the associated projected resolvent:
\bee
\label{eq_projected_resolvent}
P:=\one-\<\cdot,\psi_0\>\psi_0,\quad R:=(D-E_0\one)^{-1}P,\quad R\psi_0:=0.
\ene
\begin{itemize}
    \item[(A3)] $\psi_0$ belongs to the domain of definition of every (non-commutative) monomial constructed from $A$ and $R$.
\end{itemize}
We note that (A1) and (A2) describe the general setting of our question, and $R$ is bounded if and only if $E_0$ is isolated. In our main case of interest that involves non-isolated eigenvalues, we use (A3) in order to be able to claim that all terms of RSS are well-defined. Note that this assumption does not involve the vector $\psi$. In order to be able to use the diagram expression for the remainder, we will sometimes impose an additional condition.
\begin{itemize}
    \item[(A4)]$\psi$ belongs to the domain of definition of every (non-commutative) monomial constructed from $A$ and $R$.
\end{itemize}
Under (A1) -- (A3), one can rewrite the eigenvalue equation \eqref{eq_psi_def} as
\bee
\label{eq_rewrite}
(D-E_0)\psi=(E-E_0)\psi-\ep A\psi.
\ene
Let us further put
\bee
\label{eq_ecorr_def}
E_{\corr}:=E-E_0,\quad \psi_\corr:=\psi-\psi_0,
\ene
(notice that $P\psi=\psi_\corr$) and rewrite \eqref{eq_rewrite} as
\bee
\label{eq_rewrite_2}
(D-E_0)\psi=(E_\corr-\ep A)\psi.
\ene
Apply $R$ 
to both sides of the rewritten eigenvalue equation \eqref{eq_rewrite_2} (notice that we can do this since the LHS -- and, thus, the RHS as well -- is in the domain of $R$) and arrive at
\begin{equation}
\label{eq_psi_recursion}
\psi=\psi_0+R(E_{\corr}-\ep A)\psi.
\end{equation}
We note that \eqref{eq_psi_recursion} can be considered as a form of the Lippmann -- Schwinger equation \cite{Lippmann}. By projecting \eqref{eq_psi_def} onto $\psi_0$ and using the definition of $E_\corr$, we also have
\bee
\label{eq_ecorr_def2}
E_\corr=\ep \<\psi_0,A\psi\>=\ep \<\psi,A\psi_0\>,
\ene
since $E_\corr\in \R$. 
Noticing that $R(E_\corr-\ep A)\psi_0=-\ep RA\psi_0$, one can iterate \eqref{eq_psi_recursion} and arrive at
\begin{multline}
\label{eq_psi_recursion_multiple}
\psi=\psi_0+\sum_{j=1}^{N-1}\l(R(E_\corr-\ep A)\r)^j\psi_0+\l(R(E_\corr-\ep A)\r)^N\psi=\\=\psi_0-\sum_{j=1}^{N-1}\ep^jR\l\{\<\psi,A\psi_0\>R-AR\r\}^{j-1}A\psi_0+\ep^N\l\{R(\<\psi,A\psi_0\>-A)\r\}^N\psi,
\end{multline}
which leads to 
\begin{multline}
\label{eq_canonical_inner_product}
\<\psi,A\psi_0\>=\\=\<\psi_0,A\psi_0\>-\sum_{j=1}^{N-1}\ep^j\<R\{\<\psi,A\psi_0\>R-AR\}^{j-1}A\psi_0,A\psi_0\>+\ep^N\<\l\{R(\<\psi,A\psi_0\>-A)\r\}^N\psi,A\psi_0\>=\\
=\sum_{j=0}^{N-1}\ep^{j}\l\<\psi_0,\{\<\psi,A\psi_0\>R-AR\}^jA\psi_0\r\>+\ep^{N} \l\<\psi,\{\<\psi,A\psi_0\>R-AR\}^{N}A\psi_0\r\>.
\end{multline}
As a consequence,
from \eqref{eq_ecorr_def2}, we also obtain
\begin{multline}
\label{eq_energy_recursion}
E_\corr=\ep\<\psi,A\psi_0\>=\\
=\sum_{j=0}^{N-1}\ep^{j+1}\l\<\psi_0,\{\<\psi,A\psi_0\>R-AR\}^jA\psi_0\r\>+\ep^{N+1} \l\<\psi,\{\<\psi,A\psi_0\>R-AR\}^{N}A\psi_0\r\>.
\end{multline}
\begin{rem}
\label{remark_domains}
We want to emphasize that both formulas represent exact relations which hold under the general assumptions (A1) -- (A3). One has to be careful with expanding the parentheses in \eqref{eq_psi_recursion_multiple} -- \eqref{eq_energy_recursion}. In particular, \eqref{eq_canonical_inner_product} and \eqref{eq_energy_recursion} only involve monomials in $A$ and $R$ applied to $\psi_0$, and so their expansions are allowed due to (A3). 

In \eqref{eq_psi_recursion_multiple}, the last term $\ep^N\l\{R(\<\psi,A\psi_0\>-A)\r\}^N\psi$ will be referred to as the {\it singular term} and will play a special role. While the term itself is well-defined as a consequence of the eigenvalue equation, the possibility of its expansion is conditional on (A4).
\end{rem}

In the above expressions, the {\it order} of a term is, by definition, the power of $\ep$ in front of that term or, equivalently, the number of times the operator $A$ appears in it.
Each copy of $\<\psi,A\psi_0\>$ in \eqref{eq_energy_recursion} can be further expanded by substituting the expression \eqref{eq_canonical_inner_product}, potentially with a different value of $N$, and this process can also be iterated, thus creating a large variety of possible forms of \eqref{eq_energy_recursion}. In order to introduce the diagrams, it will be convenient to consider a particular sequence of such iterations, with the property that $\psi$ can only appear in the terms of highest order. Such a form is not unique, and our next goal is to choose a particular ``canonical'' sequence of substitutions of the above kind.

We will use the following convention: whenever we ``expand the brackets'', we will always perform this procedure from left to right, and neither change the order of the terms nor the factors in each term, even if the factors are commuting scalars. For example,
\bee
\label{eq_brackers_convention}
(a+b)(c+d)(e+f)=ace+bce+ade+bde+acf+bcf+adf+bdf.
\ene
We now describe the algorithm of obtaining the canonical expansion for \eqref{eq_energy_recursion}. In the beginning, we fix $N$ and declare the right hand side of \eqref{eq_energy_recursion} to be the current expression, which will be transformed as follows.
\begin{itemize}
    \item[(C1)] For $j=1,2,\ldots,N$, expand each factor $\{R\<\psi_0,A\psi\>-RA\}^j$ of the current expression into $2^j$ terms using the convention \eqref{eq_brackers_convention}.
    \item[(C2)] Consider the last (rightmost) term of the current expression of the order not exceeding $N$ and $\<A\psi,\psi_0\>$ appearing at least once. Denote by $i$ the order of that term. Note that, in view of the conventions used in (C1), all occurrences of $\<A\psi,\psi_0\>$ will appear as scalar factors, perhaps located somewhere in the middle of that term.
    \item[(C3)] In the term of order $i$ selected on the previous step, replace the last (rightmost) occurrence of $\<A\psi,\psi_0\>$ by the expansion \eqref{eq_canonical_inner_product} with $N$ replaced by $N+1-i$, which will result in this term being further expanded into $N-i+2$ terms. Note that the order of all new terms will not exceed $N+1$.
    \item[(C4)] Return to step (C1) and repeat.
\end{itemize}
We note that, each time (C3) is applied, an occurrence of $\<\psi,A\psi_0\>$ is replaced by a sum in which every term containing $\psi$ has at least one $\ep$ in front of it. One can say that the appearance of $\psi$ is, therefore, moved into terms of strictly higher order.
Since terms of order $N+1$ are not expanded further, it is easy to see that the procedure will eventually terminate.

A similar algorithm can be applied to the expansion \eqref{eq_psi_recursion_multiple}, with the only exception that the singular term $\ep^N\{R\<\psi,A\psi_0\>-RA\}^N\psi$ stays intact. In this case, the results will be called {\it the canonical eigenvector expansion}. As mentioned earlier, the last term can be expanded if (A4) is additionally imposed.

In both cases, every term will have $\ep^j$ in front, with $1\le j\le N$ for the eigenvector expansions and $1\le j\le N+1$ for the eigenvalue expansions, with $\psi$ only appearing in terms of the highest order (that is, with $\ep$ appearing in the highest possible power).

\subsection{Diagram representations for the canonical expansions} Various terms and factors in the canonical expansions will be represented by certain diagrams which we will call {\it abstract configurations}. Each of these diagrams is a string of symbols, starting with either $\psi$ or $\psi_0$ and ending with $\psi$, $\psi_0$, or (in the eigenvector case) with a circle $\circ$. In between, a diagram may have arrows $\to$, circles $\circ$, parentheses, and additional occurrences of $\psi_0$.

Each abstract configuration will make a certain contribution to the expansion, and we will use notation $\cont(\cdot)$ to indicate such contribution. The simplest abstract configuration, an {\it abstract eigenvalue path}, will be represented by the diagram
\bee
\label{eq_diagram_1}
\psi_0\to\circ\to\ldots\to \circ\to\psi_0=:\psi_0\xrightarrow{\ell}\psi_0,
\ene
where $\ell$ indicates the number of arrows. Sometimes, eigenvalue paths will be referred to as loops, in order to emphasize that, in some sense, they start and end at the same point. Later, we will also consider {\it abstract eigenvector paths} which do not have that property.

The contribution of the diagram \eqref{eq_diagram_1} is, by definition,
$$
\cont(\psi_0\xrightarrow{\ell}\psi_0)=\<\psi_0, A(-RA)^{\ell-1}\psi_0\>.
$$
Each arrow represents an application of $A$ (``edge factor''). Each circle produces an application of $(-R)$ (``vertex factor''). The operators are applied in the order reflected by the direction of the arrows.

It will also be convenient to introduce partial contributions of fragments of the diagrams, for example
$$
\cont(\rightarrow \circ \rightarrow \circ \rightarrow)=A(-R)A(-R)A,\quad \cont(\circ \rightarrow \circ \rightarrow)=(-R)A(-R)A.
$$
In particular, in
$$
\cont(\rightarrow\circ\rightarrow\psi_0)=A(-R)A\psi_0, \quad \cont(\psi_0\rightarrow\circ\rightarrow)=\<\psi_0,A(-R)A\cdot\>,
$$
the first contribution is a vector (``ket vector''), and the second one a linear functional (``bra vector''). Note that the contribution of a diagram is always equal to the product of the contributions of its fragments, whenever the latter are defined.

An abstract eigenvalue path will be a particular case of an {\it abstract eigenvalue configuration}, the central object in the diagram notation, representing most of the terms in the canonical eigenvalue expansion.
\begin{defn}
\label{def_eigenvalue_configuration}
By an {\it abstract eigenvalue configuration}, we mean a result of finitely many {\it attachment procedures} applied, one after another, to an abstract eigenvalue path. Let $\cQ_1,\ldots,\cQ_k$ be abstract eigenvalue paths and $\cP$ an abstract path, or (recursively) an abstract eigenvalue configuration containing a fragment $ \circ\rightarrow$. The attachment procedure is performed as follows:
\bee
\label{eq_attachment_1}
\text{replace}\quad  \circ \rightarrow\quad\text{by}\quad (\cQ_1)(\cQ_2)\ldots(\cQ_k)\rightarrow,
\ene
with the original contribution of the fragment 
\bee
\label{eq_attachment_2}
\cont( \circ\rightarrow)=(-R)A
\ene
replaced by
\begin{multline}
\label{eq_attachment_3}
\cont((\cQ_1)(\cQ_2)\ldots(\cQ_k)\rightarrow)=\cont(\cQ_1)\ldots\cont(\cQ_k)R^{k}(-RA)\\=-\cont(\cQ_1)\ldots\cont(\cQ_k)R^{k+1}A.    
\end{multline}
\end{defn}
\begin{rem}
\label{remark_rephrase}
To rephrase this definition, one can say that each attachment factor $(\cQ_j)$ produces and extra copy of the operator $R$ applied at the location of the attachment, as well as the scalar factor $\cont(\cQ_j)$. Note that the contribution will not depend on the order in which $(\cQ_j)$ are applied at the location of the attachment; however, the corresponding diagrams, for example
$$
\psi_0\to\circ\to(\cQ_1)(\cQ_2)\to \circ\to\psi_0\quad \text{and}\quad \psi_0\to\circ\to(\cQ_2)(\cQ_1)\to \circ\to\psi_0,
$$
with $\cQ_1\neq\cQ_2$, are still counted separately.
\begin{rem}
\label{attachments_same_point}
As defined above, the attachment procedure deletes the original attachment point $\circ$ from the diagram, and therefore no more paths can be attached at the same location. However, since there is no limit on how many paths can be attached during the procedure, this restriction is not important.
\end{rem}

One can also think of attachment process as starting from some abstract eigenvalue path and attaching eigenvalue loops to it, allowing attachments to stack: that is, every point of an attached loop becomes a valid attachment point to add further loops. The original starting path will be often referred to as the {\it base path}. In the case of eigenvector configurations defined later in the text, the base path may have different properties from the attached loops.
\end{rem}

Let $\cP$ be an abstract eigenvalue configuration. Define its {\it length} by
$$
|\cP|:=\text{the number of $A$ factors in $\cont(\cP)$}.
$$
Clearly, $|\psi_0\xrightarrow{\ell}\psi_0|=\ell$, and if $\cP'$ is obtained from $\cP$ by attaching $\cQ$, then $|\cP'|=|\cP|+|\cQ|$.

Abstract eigenvalue configurations are sufficient to represent the terms of the canonical eigenvalue expansion of orders $1\le j\le N$. In the terms of the highest order $N+1$, some occurrences of $\psi_0$ will be replaced by $\psi$, and will require some modifications of the definition of abstract configurations. 

An {\it abstract remainder eigenvalue path} is, by definition, represented by a diagram 
$$
\psi\to\circ\to\ldots\to \circ\to\psi_0=:\psi\xrightarrow{\ell}\psi_0,
$$ 
with $\ell$ arrows, similarly to \eqref{eq_diagram_1}, and
$$
\cont(\psi\xrightarrow{\ell}\psi_0)=\<\psi, A(-RA)^{\ell-1}\psi_0\>.
$$

The attachment rules described in Definition \ref{def_eigenvalue_configuration} are still valid, with some of the paths involved that may be of the remainder kind. However, we need to add an extra attachment rule. In addition to locations $\circ\rightarrow$, the starting point $\psi\rightarrow$ itself also becomes a valid attachment location. For example, suppose that $\cQ_1$ and $\cQ_2$ are abstract eigenvalue configurations. One can attach them to the diagram
$$
\psi\rightarrow\circ\rightarrow\circ\rightarrow\psi_0,
$$
at both circles, as before, as well as at the initial location $\psi\rightarrow$. In the latter case, the new diagram and its contribution are defined by 
$$
\cont(\psi\dashrightarrow(\cQ_1)(\cQ_2)\to \circ\to\circ\to \psi_0):=
\<\psi,\cont(\cQ_1)\cont(\cQ_2)R^2A(-RA)^2\psi_0\>.
$$
Similarly to the usual attachment rules, this attachment also produces an extra $R$ factor, applied last, and a scalar factor $\cont(\cQ)$. In the notation, dashed line does not indicate any contribution and is included solely for aesthetic reasons. We note that, due to self-adjointness, one can think of the last factor $R$ acting on $\psi$. However, without (A4) one cannot assume that $\psi$ is actually in the domain of $R$. We note that one could, in principle, also allow this attachment in the case of the usual loops of the kind $\psi_0\xrightarrow{\ell}\psi_0$; in fact, the corresponding term is even formally present in \eqref{eq_energy_recursion}. In this case, the last $R$ can be moved onto $\psi_0$ and, since $R\psi_0=0$, the contribution of the corresponding diagram becomes zero. As above, the attachments can be iterated, with both abstract eigenvalue paths and remainder eigenvalue paths involved.

The actual definition of an abstract remainder eigenvalue configuration imposes several further restrictions on top of the above formal rules, which can be traced back to the conventions (C1) -- (C4) that were used in the definitions of the canonical expansions.
\begin{defn}
\label{def_remainder_eigenvalue_configuration}
An {\it abstract remainder eigenvalue configuration} is a result of finitely many attachments described above, with the following properties.
\begin{itemize}
	\item[(R1)] There is at least one abstract remainder  path involved.
	\item[(R2)] There is at most one abstract remainder eigenvalue path that has attachments.
	\item[(R3)] Every path attached to a abstract remainder eigenvalue path must be a remainder eigenvalue path of length $1$.
	\item[(R4)] Every path located on the diagram to the left of some abstract remainder eigenvalue path, must also be a remainder eigenvalue path of length $1$.
\end{itemize}
\end{defn}
In order to treat the canonical eigenvector expansion \eqref{eq_psi_recursion_multiple}, we will need to introduce eigenvector paths. Compared to the eigenvalue case, the situation complicates somewhat due to the fact that the expansion of the remainder term $\ep^N\l\{R(\<\psi,A\psi_0\>-A)\r\}^N$ in \eqref{eq_psi_recursion_multiple} relies on whether (A4) is assumed to hold.
\begin{defn}
\label{def_abstract_eigenvector_path}
An {\it abstract eigenvector path} is represented by the diagram 
$$
\circ\xrightarrow{}\ldots\xrightarrow{}\circ\rightarrow\psi_0=\circ\xrightarrow{\ell}\psi_0,
$$
with $\ell$ arrows and
$$
\cont(\circ\xrightarrow{\ell}\psi_0)=(-RA)^{\ell}\psi_0.
$$
\end{defn}
\begin{defn}
An {\it abstract eigenvector configuration} is the result of finitely many attachments described in Definition \ref{def_eigenvalue_configuration} to an abstract eigenvector path, with contribution rules described in \eqref{eq_attachment_1} -- \eqref{eq_attachment_3}. Note that all attachment must be of the eigenvalue kind.
\end{defn}
\begin{defn}
An {\it abstract remainder eigenvector configuration} is the result of finitely many attachments described in Definition \ref{def_remainder_eigenvalue_configuration} to an abstract eigenvector path, following the rules (R1) -- (R4). Note that the original base path is of the same kind as in the previous definition, but the attached eigenvalue loops may be of either kind, within the above constraints.
\end{defn}
Suppose that (A4) holds. Then, one can additionally define the associated diagrams.
\begin{defn}
A {\it singular eigenvector path} is represented by the diagram 
$$
\circ\xrightarrow{}\ldots\xrightarrow{}\circ\rightarrow\psi=\circ\xrightarrow{\ell}\psi,
$$
with $\ell$ arrows and
$$
\cont(\circ\xrightarrow{\ell}\psi)=(-RA)^{\ell}\psi.
$$
\end{defn}

Finally, a singular eigenvector configuration is the kind of a remainder configuration that appears after expanding the singular term $\ep^N\l\{R(\<\psi,A\psi_0\>-A)\r\}^N\psi$.
\begin{defn}
A {\it singular eigenvector configuration} is the result of finitely many attachments of abstract eigenvector paths $\psi\to\psi_0$ to a singular eigenvector path.
\end{defn}
\begin{rem}
We note that the singular configurations are defined in line with the rules (R1) -- (R4). Indeed, once the base path is of singular kind, the only attachments allowed must be remainder eigenvalue paths of length $1$. In the case of singular paths, attachments can also be made at the last location $\psi$. For example, attachment of three copies of $\psi\to \psi_0$ at the location $\psi$ is represented by the following transformation:
$$
\text{replace}\quad \ldots\psi\quad\text{by}\quad \ldots(\psi\to \psi_0)(\psi\to \psi_0)(\psi\to \psi_0)\dashrightarrow \psi.
$$
Each copy of $(\psi\to \psi_0)$ contributes a factor $\<\psi,A\psi_0\>R$ applied to $\psi$.
\end{rem}
\begin{thm}
\label{th_canonical_expansion}
Assume that $D$, $E$, and $\psi$ satisfy $\mathrm{(A1)}$ -- $\mathrm{(A3)}$, and define $E_\corr$ as in \eqref{eq_ecorr_def}. Then, for every $N\in \N$, we have
\begin{multline}
\label{eq_ecorr_th}
E_\corr=\\=\sum\{\ep^{|\cP|}\cont(\cP)\colon \text{$\cP$ is an abstract eigenvalue configuration with $1\le |\cP|\le N$}\}+\\
+\sum\{\ep^{N+1}\cont(\cP)\colon \text{$\cP$ is an abstract remainder eigenvalue configuration with $|\cP|=N+1$}\}.
\end{multline}
\begin{multline}
\label{eq_psicorr_th}
\psi_\corr=\\=\sum\{\ep^{|\cP|}\cont(\cP)\colon \text{$\cP$ is an abstract eigenvector  configuration with $1\le |\cP|\le N-1$}\}+\\
+\sum\{\ep^N\cont(\cP)\colon \text{$\cP$ is an abstract remainder eigenvector configuration with $|\cP|=N$}\}+\\+\ep^N\l\{R(\<\psi,A\psi_0\>-A)\r\}^N.
\end{multline}
If, in addition, $\mathrm{(A4)}$ is satisfied, then \eqref{eq_psicorr_th} can be further expanded
\begin{multline}
\label{eq_psicorr_th_a4}
\l\{R(\<\psi,A\psi_0\>-A)\r\}^N=\\=\sum\{\cont(\cP)\colon \text{$\cP$ is an singular abstract remainder eigenvector configuration with $|\cP|=N$}\}.
\end{multline}
\end{thm}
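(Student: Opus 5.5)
The proof is by induction on the canonical substitution procedure (C1)--(C4), matching each term it produces with a diagram. The starting points are the exact identities \eqref{eq_canonical_inner_product}, \eqref{eq_energy_recursion} and \eqref{eq_psi_recursion_multiple}, which hold under (A1)--(A3). By Remark \ref{remark_domains}, every monomial appearing in them is legitimately expanded, because it is a monomial in $A,R$ applied to $\psi_0$. The exception is the singular term in \eqref{eq_psi_recursion_multiple}, which is kept intact unless (A4) holds.

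The first step is the dictionary between terms and diagrams. Expand $\{\<\psi,A\psi_0\>R-AR\}^j$ using the convention \eqref{eq_brackers_convention}. Each resulting term is an ordered word in which every factor $-AR$ contributes an arrow followed by a circle. Every scalar factor $\<\psi,A\psi_0\>$, together with the extra $R$ next to it, is an attachment marker sitting at the adjacent circle (or at the initial location $\psi\to$ in the remainder case). With this reading, the $j$-th summand of \eqref{eq_energy_recursion} with no scalar factors is exactly $\ep^{j+1}\cont(\psi_0\xrightarrow{j+1}\psi_0)$. A term that contains $k$ scalar factors adjacent to one circle has the shape $\<\psi,A\psi_0\>^kR^{k+1}A$, up to sign, which is exactly \eqref{eq_attachment_3}.

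Next, when (C3) replaces the rightmost $\<\psi,A\psi_0\>$ by \eqref{eq_canonical_inner_product} with $N+1-i$, each new summand replaces a pending marker either by a closed loop $\psi_0\xrightarrow{\ell}\psi_0$ with $\ell\le N+1-i$, or by the remainder path $\psi\to\cdots\to\psi_0$, which has attachment markers of its own. The total order is then at most $N+1$, and the lengths add as $|\cP'|=|\cP|+|\cQ|$. I will argue by induction on the number of (C3) steps. The claim is that after each step the current expression equals the sum of contributions of those configurations that are fully resolved, plus configurations with pending markers; that the set of fully resolved configurations of order $\le N$ grows exactly into the set of all abstract eigenvalue configurations; and that terms of order $N+1$ retain $\psi$.

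The main obstacle is showing that the terms of order $N+1$ are in bijection with the remainder configurations obeying (R1)--(R4). I would characterize the rightmost-first substitution order as follows. A term reaching order $N+1$ stops expanding, so its unresolved markers stay as $\<\psi,A\psi_0\>$, i.e.\ attached remainder paths of length $1$; this gives (R3) and also (R2). Because substitution proceeds right to left, every marker to the left of the first remainder path was never expanded; this gives (R4). Finally, (R1) holds because order $N+1$ can only be reached through a term that contains $\psi$. For the converse, given a configuration satisfying (R1)--(R4), I would reconstruct the unique substitution history by reading the markers right to left. The sign conventions $(-R)$ and the ordering of the parenthesized attachments, which are counted separately as in Remark \ref{remark_rephrase}, are routine bookkeeping, as is the vanishing of attachments at $\psi_0\to$ (due to $R\psi_0=0$).

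The eigenvector formula \eqref{eq_psicorr_th} follows the same way from \eqref{eq_psi_recursion_multiple}, with base path $\circ\xrightarrow{\ell}\psi_0$ and the singular term left unexpanded. Under (A4) the singular term can be expanded as well. Every factor $\<\psi,A\psi_0\>$ in it is left unsubstituted, so it becomes an attached remainder path $\psi\to\psi_0$ of length $1$, and this gives \eqref{eq_psicorr_th_a4}.
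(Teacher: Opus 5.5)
Your plan follows the same route as the paper. Expanding $\{\<\psi,A\psi_0\>R-AR\}^j$ gives a term-to-diagram dictionary: $-AR$ extends the base path and $\<\psi,A\psi_0\>R$ records an attachment. You then run an induction over the canonical substitutions. The paper inducts on $N$ and derives the eigenvalue formula from the eigenvector one via \eqref{eq_ecorr_def2}, which is an inessential difference. The dictionary, the signs, the discarding of attachments at $\psi_0\to$, and the (A4) expansion of the singular term are all fine.

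The step that fails is the converse half of your bijection for the top-order terms. As stated, (R1)--(R4) admit configurations that the procedure never produces, so no substitution history can be reconstructed for them.

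\emph{Eigenvector case, $N=2$.} Attach the bare loop $\psi\to\psi_0$ at the $\circ\to$ of $\circ\to\psi_0$. This gives $(\psi\to\psi_0)\to\psi_0$, of length $2$, with contribution $-\<\psi,A\psi_0\>R^2A\psi_0$. It satisfies (R1)--(R4): nothing lies to its left, and no remainder path carries attachments. But for $N=2$, \eqref{eq_psi_recursion_multiple} is exactly $\psi_\corr=-\ep RA\psi_0+\ep^2\{R(\<\psi,A\psi_0\>-A)\}^2\psi$. There is no marker outside the singular term, so the procedure yields no remainder eigenvector configurations at all.

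\emph{Eigenvalue case, $N=2$.} The exact identity is $E_\corr=\ep\<\psi_0,A\psi_0\>-\ep^2\<\psi_0,ARA\psi_0\>+\ep^3\<\psi,\{\<\psi,A\psi_0\>R-AR\}^2A\psi_0\>$, after using $R\psi_0=0$. So $\psi_0\to(\psi\to\psi_0)\to\psi_0$, with contribution $-\<\psi,A\psi_0\>\<\psi_0,AR^2A\psi_0\>$, is absent. Yet (R4) cannot exclude it without also excluding the genuine $N=3$ term $\psi_0\to(\psi\to\circ\to\psi_0)\to\psi_0$, which has exactly the same shape.

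The missing observation is that a term reaches top order \emph{only} through the remainder branch of (C3), or as the initial remainder term. That branch always inserts a remainder path $\mathcal M$ of base length $N+2-i\ge 2$ (respectively $N+1-i\ge 2$ in the eigenvector case). In other words, $\mathcal M$ has length at least $2$ or carries an attachment.

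Your forward analysis actually proves a sharper structure than (R1):
\begin{itemize}
\item there is exactly one remainder path $\mathcal M$ that is not a bare $\psi\to\psi_0$;
\item everything attached to $\mathcal M$, and everything entirely to its left, is a bare $\psi\to\psi_0$;
\item the paths enclosing $\mathcal M$ are $\psi_0$-paths (or the eigenvector base path);
\item everything entirely to its right is a fully resolved $\psi_0$-configuration.
\end{itemize}
So (R1) must be strengthened to require such an $\mathcal M$; then (R2)--(R4) force its uniqueness. Your inverse map becomes: locate $\mathcal M$ and collapse it to a single marker $\<\psi,A\psi_0\>$. This yields a term of order at most $N$ whose rightmost marker is the new one, and you recurse.

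The paper's own proof leaves this correspondence to the reader and passes over the same point, so the refinement is needed there as well. With it, your argument goes through.
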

\begin{proof}
The equality \eqref{eq_ecorr_th} follows from \eqref{eq_ecorr_def2} and \eqref{eq_psicorr_th}. In order to prove the latter, it is sufficient to establish a one-to-one correspondence between the terms obtained as a result of the canonical eigenvector expansion and the abstract configurations involved in the \eqref{eq_psicorr_th}.

For each configuration $\cP$ among those considered in \eqref{eq_psicorr_th}, define its {\it base length} as the sum of the length of the base loop of $\cP$ and the number of attachments that are made directly to the base loop. Clearly, the base length can be between $1$ and $N+1$ for the case of eigenvector configurations, and the configurations can be classified by their base length and the positions of attached paths.

In the language of the terms of \eqref{eq_psi_recursion_multiple}, the index $j$ can be identified with the base length of the corresponding paths. After expanding the corresponding factor $\{\<\psi_0,A\psi\>R-AR\}^j$ into $2^j$ terms (without changing the order in which the factors $\<\psi_0,A\psi\>R$ and $(-RA)$ are considered), each term will represent the number and locations of attachments. More precisely, the factor $(-RA)$ means one step being added to the base loop, and $R\<\psi_0,A\psi\>$ means an attachment has been made. Afterwards, we can proceed by induction in $N$ and noting that, by induction assumptions, the expansion of $R\<\psi_0,A\psi\>$ will include all allowed eigenvalue configurations. 

One has to be somewhat careful regarding the remainder terms; however, the reader can easily check that the rules (R1) -- (R4) of attachment of the remainder paths reflect the order in which the recursion in the canonical expansion (C1) -- (C4) is performed. In particular, the special role of paths of length 1 in (R3) and (R4) is due to the fact that the factors $\<A\psi_0,\psi\>$, corresponding to such paths, are already present in \eqref{eq_psi_recursion_multiple} and \eqref{eq_energy_recursion} before any canonical expansion. The rule (R3) corresponds to the fact that any other way for $\psi$ to appear would be in the term of highest order produced by the inductive step (C3). However, this means that the expansion has not yet reached preceding factors in that term, and those factors cannot be expanded further without increasing the order.

Finally, if (A4) holds, the expansion of the last term $\ep^N\l\{R(\<\psi,A\psi_0\>-A)\r\}^N$ only involves expanding the brackets, since any substitution will push the order of the terms above $N$. The reader can easily check that the terms obtained as the result of such expansion correspond exactly to singular configurations considered in \eqref{eq_psicorr_th_a4}.
\end{proof}
\subsection{Applications to operators with pure point spectra}
In this section, we will additionally assume that the operator $D$ has pure point spectrum: that is, without loss of generality, $D$ acts on $\ell^2(\mathbb I)$ with the standard basis $\{e_{\bn}\colon\bn\in\mathbb I\}$ by
\bee
\label{eq_d_diagonal_def}
D{e_\bn}=d_\bn e_{\bn},\quad \bn\in \mathbb I,
\ene
We note that the most common application here is the case $\mathbb I=\Z^d$, with $D$ being the potential and $A$ being the hopping term of a Schr\"odinger-type operator
$$
H=D+\ep A.
$$
In this setting, diagram representations of the terms of RSS were used in \cite{KPS} in order to establish convergence of perturbation series (see also recent work \cite{faust_k,velocity} that uses perturbation series and diagrams for isolated Floquet eigenvalues in the periodic case). The goal of the this subsection is to rewrite the results of Subsection \ref{subsection_canonical_expansions} in terms of the matrix elements of the operators $D$ and $A$, introducing expressions for the remainder configurations while mostly preserving the notation of \cite{KPS}.

\begin{defn}
\label{def_finite_range}
We say that $A$ has {\it finite range with respect to $D$} if, for every $\bn\in \I$, $Ae_\bn$ is a finite linear combination of the standard basis vectors of $\ell^2(\I)$.
\end{defn}
Assume now
\bee
\label{eq_non_resonant}
\psi_0=e_{\bm},\quad E_0=d_\bm,\quad d_\bn\neq d_\bm\quad\text{for}\quad\bn\neq \bm.
\ene
Suppose that
\bee
\label{eq_psi_lattice}
(D+\varepsilon A)\psi=E\psi,\quad \<\psi,e_\bm\>=1.
\ene
In other words, we consider $d_\bm$ as the unperturbed eigenvalue and $e_\bm$ as the unperturbed eigenvector, therefore satisfying (A1) and (A2) from Subsection \ref{subsection_canonical_expansions}. We will also assume that $A$ has finite range with respect to $D$, which, together with the earlier assumptions, implies (A3). 
\begin{rem}
\label{rem_a3}
Definition \ref{def_finite_range} is not the only natural way to impose (A3). Another important sufficient condition in the case $\I=\Z^d$ is both $A$ and $R$ being {\it localization preserving}, see Definition \ref{def_localization_preserving}.
\end{rem}
Let
\bee
\label{eq_ecorr_psicorr_lattice}
E_\corr:=E-d_\bm,\quad \psi_\corr=\psi-e_\bm.
\ene
Similarly to \cite{KPS}, we now introduce the language of loops in the pure point setting. A {\it path} is, by definition, represented by a diagram of the form
\begin{equation}
\label{eq_p_def}
\cP=\bn_0\to \bn_1\to \ldots\to \bn_k=\bn\to \bn_1\to \ldots\to\bn_{k-1}\to \bm,
\end{equation}
where 
$$
\bn_k=\bm \in \I,\quad \bn_0=\bn\in \I,\quad \bn_1,\ldots\bn_{k-1}\in \I\setminus\{\bn_0\}.
$$
If $\bm=\bn$, the corresponding path will be called an {\it eigenvalue path} or a {\it loop}. If $\bm\neq \bn$, it will be called an {\it eigenvector path}. Let $\cP$ be a path, $\cQ=\bm\to \bm_1\to \ldots\bm_{\ell-1}\to \bm$ be an eigenvalue path, and $\bn_j\neq \bm$ be one of the entries in $\cP$. The {\it attachment procedure} (at the position $j$) is performed by replacing $\bn_j$ in \eqref{eq_p_def} by $\bn_j\to (\cQ)\to \bn_j$, resulting in an object represented by the diagram
\begin{equation}
\label{eq_attachment_example}
\bn\to \bn_1\to \ldots\to \bn_j\to (\bm\to\bm_1\to\ldots\to \bm_{\ell-1}\to \bm)\to \bn_j\to\ldots\to \bn_{k-1}\to \bm.
\end{equation}
Note that the condition $\bn_j\neq \bm$ means that one cannot attach anything at the first and last point of an eigenvalue path and at the last point of an eigenvector path. The attachments can be iterated. In \eqref{eq_attachment_example}, the valid attachment points are $\bn_1,\ldots,\bn_{\k-1}$, $\bm_1,\ldots\bm_{\ell-1}$, and $\bn$ if $\bn\neq \bm$ (note that one always has $\bn_k=\bm$. The points $\bn_j$ appear twice, meaning that further loops may be attached either ``before'' or ''after'' $\cQ$. Unlike the notation in \eqref{eq_attachment_1} -- \eqref{eq_attachment_3}, the extra $\bn_j$ reflected in the notation on both sides, in order to indicate the specific element of $\I$ at which the attachment happens.

Depending on the type of the original path $\cP$, we call the result of adding attachments to it an {\it eigenvalue or eigenvector configuration}. Similarly to the previous section, all attached paths must be of the eigenvalue kind.

For a path $\cP$ introduced in \eqref{eq_p_def}, its contribution is, by definition,
$$
\cont(\cP):=\begin{cases}
\prod_{j=0}^{k-1}{A_{\bn_{j}\bn_{j+1}}}(d_{\bm}-d_{\bn_j})^{-1},&\bn\neq \bm;\\
A_{\bn_0\bn_1}\prod_{j=1}^{k-1}A_{\bn_{j}\bn_{j+1}}(d_{\bm}-d_{\bn_j})^{-1},&\bn=\bm.
\end{cases}
$$
For the attachments, it is defined inductively: if
$$
\cP=\cP_1\to \bk\to \cP_2,\quad \cP'=\cP_1\to \bk\to (\cQ)\dashrightarrow \bk\to \cP_2,
$$
where $\cP$ is a configuration and $\cQ$ is an eigenvalue path, then, by definition
$$
\cont(\cP'):=-(d_{\bm}-d_{\bk})^{-1}\cont(\cP)\cont(\cQ).
$$
One can interpret the contribution of each configuration as the product of ``vertex factors'', ``edge factors'', and ``attachment factors'' that are included in that configuration, using the following rules:
\begin{enumerate}
    \item Each $\bk\in \I\setminus\{\bm\}$ that has an outbound arrow to another point  (e. g. $\bk\to \bk'$) produces a vertex factor $(d_\bm-d_\bk)^{-1}$.
    \item Each arrow $\bk'\to \bk$ produces an edge factor $A_{\bk'\bk}=\< Ae_{\bk'},e_{\bk}\>$.
    \item Each point $\bk\in\I$ which has an outbound arrow towards an attachment (e. g. $\bk\to (...)$) produces an attachment factor $-(d_{\bm}-d_{\bk})^{-1}$. Note that the rules of attachment require $\bk\neq \bm$.    
\end{enumerate}
The arrow $(...)\dashrightarrow \bk$ immediately following the attachment does not produce any contribution, which is reflected in the dashed notation.

Let
$$
|\cP|:=\{\text{the number of edge factors in }\cont(\cP)\}.
$$
\begin{rem}
\label{rem_differences_kps}
Unlike \cite{KPS}, the operator $A$ can have diagonal entries, and therefore some diagrams may contain arrows $\bk\to \bk$. In particular, $\bm\to \bm$ is a valid eigenvalue path with $|\bm\to \bm|=1$ and $\cont(\bm\to \bm)=\<Ae_\bm,e_\bm\>$.

The reader may also notice that the direction of the arrows has changed to the opposite compared to \cite{KPS}. In most cases, it does not affect the results due to the fact that the operators involved are self-adjoint. The current choice is somewhat dictated by the choice of order in Subsection \ref{subsection_canonical_expansions}, which follows the usual order in which composition of operators is written.
\end{rem}
In order to describe the remainder terms of highest orders, let $\psi\in \ell^2(\Z^d)$ and consider {\it remainder eigenvalue paths} denoted by
\begin{equation}
\label{eq_psipath_def}
\psi\to \bn_1\to\ldots\to \bn_{\ell-1}\to \bm.
\end{equation}
By definition,
\begin{equation}
\label{eq_psipath_cont}
\cont(\psi\to \bn_1\to\ldots\to \bn_{\ell-1}\to \bm):=\<A\psi,e_{\bn_1}\>\prod_{j=1}^{\ell-1}\<A e_{\bn_{j}},e_{\bn_{j+1}}\>(d_{\bm}-d_{\bn_{j}})^{-1}
\end{equation}
In other words, the first part $\bm\to \bn_{1}$ is replaced by $\psi \to \bn_1$, and the contribution of the corresponding arrow is now $\<A\psi,e_{\bn_1}\>$ rather than $A_{\bn_{0}\,\bn_{1}}=\<A e_{\bm},e_{\bn_{1}}\>$.

In the above diagram, the rules of attachment to $\bn_1,\bn_2,\ldots, \bn_{\ell-1}$ are the same as above. In the case of remainder paths, we are also required to introduce attachments to $\psi$. Recall that, in the language of the previous section, attaching a path $\cQ$ at that location amounts to applying an extra copy of $R$ and adding a scalar factor $-\cont(\cQ)$. 
One can also formally move the action of $R$ onto $\psi$ and write
$$
R\psi=\sum_{\bk\colon\bk\neq\bm} (d_\bm-d_\bk)^{-1} \psi_\bk e_\bk,\quad \text{where}\quad \psi=\sum_{\bk\in \Z^d} \psi_\bk e_{\bk}.
$$
Such representation would require some version of (A4) to hold. However, if one considers diagram representations for individual matrix elements, this problem does not appear, and we arrive to the diagram
\begin{multline*}
\cont(\psi\to\bk\to (\cQ)\dashrightarrow \bk\to \bn_1\to\ldots\to \bn_{\ell-1}\to \bm):=\\:=-\<\psi,e_\bk\>(d_{\bm}-d_{\bk})^{-1}\<Ae_\bk,e_{\bn_1}\>\prod_{j=1}^{\ell-1}\<A e_{\bn_{j}},e_{\bn_{j+1}}\>(d_{\bm}-d_{\bn_{j}})^{-1}.	
\end{multline*}
More formally, if 
$$
\cP=\psi\to \bn_1\to \cP', \quad \cont(\cP)=\<A\psi,e_{\bn_1}\>\cdot C,
$$
one can produce an attachment of $\cQ$ to $\psi$ at the lattice point $\bk$, represented by the diagram with the following contribution
$$
\cont(\psi\to \bk\to(\cQ)\dashrightarrow \bn_1\to \cP')=-\<\psi,e_{\bk}\>(d_{\bm}-d_{\bk})^{-1}\<Ae_{\bk},e_{\bn_1}\>\,\cont(\cQ)\cdot C.
$$
\begin{rem}
\label{rem_no_a4}
One can also view the above construction as a way to expand the singular term on the level of individual matrix elements: we have
$$
\<e_{\bn},\l\{R(\<\psi,A\psi_0\>-A)\r\}^N\psi\>=\<\l\{R(\<\psi,A\psi_0\>-A)\r\}^N e_{\bn},\psi\>,
$$
and the brackets can now be expanded in the left hand side due to $A$ being of finite range with respect to $D$, without assuming (A4).
\end{rem}

For the remainder eigenvalue and eigenvector configurations, the general rules are the same as in the previous section, and we restate them as follows.
\begin{itemize}
	\item There is at least one remainder path involved.
	\item There is at most one remainder path that has attachments.
	\item Every loop attached to a remainder must be a remainder loop of length $1$.
	\item Every loop located on the diagram to the left of some remainder path, must also be a remainder loop of length $1$.
\end{itemize}
We can now state the main theorem result of this section, which is a restatement of Theorem \ref{th_canonical_expansion} in the language of eigenvalues of $D$ and matrix elements of $A$.
\begin{thm}
\label{th_lattice_diagram_expansion}
Assume that $D$, $E$, $\psi$, $E_\corr$, and $\psi_\corr$ are defined as in \eqref{eq_d_diagonal_def}, \eqref{eq_psi_lattice}, \eqref{eq_ecorr_psicorr_lattice}, and that $A$ has finite range with respect to $D$. Then, for every $N\in \N$, we have
\begin{multline*}
E_\corr=\sum\{\ep^{|\cP|}\cont(\cP)\colon \text{$\cP$ is an eigenvalue configuration with $1\le |\cP|\le N$}\}+\\
+\sum\{\ep^{N+1}\cont(\cP)\colon \text{$\cP$ is an remainder eigenvalue configuration with $|\cP|=N+1$}\};
\end{multline*}
\begin{multline*}
\psi(\bn)=e_{\bm}+\sum\{\ep^{|\cP|}\cont(\cP)\colon 1\le |\cP|\le N,\,\,\text{$\cP$ is an eigenvector configuration}\}+\\+
	\sum\{\ep^N\cont(\cP)\colon |\cP|=N+1,\,\,\text{$\cP$ is a remainder eigenvector configuration}\}.
\end{multline*}
\end{thm}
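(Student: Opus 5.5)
The plan is to deduce Theorem \ref{th_lattice_diagram_expansion} directly from Theorem \ref{th_canonical_expansion}. We set $\psi_0=e_\bm$, $E_0=d_\bm$ and check the hypotheses. Assumptions (A1) and (A2) are immediate from \eqref{eq_non_resonant} and \eqref{eq_psi_lattice}. In this setting $R=(D-d_\bm)^{-1}P$ acts diagonally, with $Re_\bk=(d_\bk-d_\bm)^{-1}e_\bk$ for $\bk\ne\bm$ and $Re_\bm=0$. Since $A$ has finite range with respect to $D$, every monomial in $A$ and $R$ applied to $e_\bm$ is a finite linear combination of basis vectors, which gives (A3). We then evaluate each abstract contribution from Theorem \ref{th_canonical_expansion} in the basis $\{e_\bk\}$. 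Inserting the resolution of identity $\one=\sum_\bk\<\cdot,e_\bk\>e_\bk$ between consecutive factors, each finite by the finite range assumption, turns every abstract configuration into a finite sum over lattice configurations.

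The key step is a bijection between lattice configurations and the terms of this expansion. Each factor $(-R)A$ becomes a sum over $\bk\ne\bm$ of the edge factor $A_{\cdot\,\bk}$ times the vertex factor $(d_\bm-d_\bk)^{-1}$; the sign $-R$ combines with $(d_\bk-d_\bm)^{-1}$ to produce exactly this. An attachment of $(\cQ_1)\cdots(\cQ_k)$ at a circle contributes $-\cont(\cQ_1)\cdots\cont(\cQ_k)R^{k+1}A$, and in the basis this gives the vertex factor together with $k$ attachment factors $-(d_\bm-d_\bk)^{-1}$ at the same lattice point $\bk$. This matches the lattice rule of replacing $\bk$ by $\bk\to(\cQ)\dashrightarrow\bk$, iterated $k$ times.

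Some care is needed with the special vertices. The constraint $\bn_1,\ldots,\bn_{k-1}\ne\bm$ comes from $Re_\bm=0$, and the endpoint $\bm$ carries no vertex factor in the eigenvalue case, since it sits inside the inner product with $\psi_0$. For eigenvector paths we take the $\bn$-th matrix element, so the starting point $\bn$ receives a vertex factor; consequently no vector component is absent and the first product formula in $\cont(\cP)$ applies. We then check that sums of abstract contributions over iterated attachments reproduce the lattice contributions inductively in the number of attachments, using $\cont(\cP')=-(d_\bm-d_\bk)^{-1}\cont(\cP)\cont(\cQ)$. The lengths agree because both count the $A$ factors.

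The main obstacle is the remainder and singular terms, because (A4) is not assumed and one cannot simply write $R\psi$. Following Remark \ref{rem_no_a4}, we move every operator off $\psi$ by self-adjointness: $\<e_\bn,X\psi\>=\<X^*e_\bn,\psi\>$. Here $X^*e_\bn$ is a finite combination of basis vectors thanks to the finite range property, so the brackets can be expanded and each occurrence of $\psi$ appears only through the scalars $\<\psi,e_\bk\>$ and $\<A\psi,e_{\bn_1}\>$. This produces the remainder paths \eqref{eq_psipath_cont} and the attachments at $\psi$ located at lattice point $\bk$. The rules (R1)--(R4) transfer verbatim, since they concern only the order of substitutions and are unaffected by the choice of basis. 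Finally we absorb the matrix elements of the singular term into remainder eigenvector configurations of length $N+1$ (counting the extra $A$ hidden in the prefactor $\<\psi,Ae_\bm\>$), and we verify that the resulting combinatorics still agree with the statement.
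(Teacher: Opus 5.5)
Your route is the same as the paper's. The paper's proof is a brief ``by inspection'' of Theorem \ref{th_canonical_expansion} in the standard basis: $R$ is diagonal and supplies the vertex and attachment factors, matrix multiplication supplies the edge factors, and Remark \ref{rem_no_a4} (moving the operators onto $e_\bn$ by self-adjointness) removes the need for (A4). Your more detailed version of this is fine, including the sign bookkeeping. With $Re_\bk=(d_\bk-d_\bm)^{-1}e_\bk$, each $-R$ gives the vertex factor $(d_\bm-d_\bk)^{-1}$, and each extra $R$ from an attachment gives $-(d_\bm-d_\bk)^{-1}$, exactly as the lattice rules require.

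The one step that fails as written is the last one. There is no ``extra $A$ hidden in the prefactor $\<\psi,Ae_\bm\>$''. The singular term $\ep^N\{R(\<\psi,A\psi_0\>-A)\}^N\psi$ contains exactly $N$ copies of $A$, one in each bracket, and every $\<\psi,Ae_\bm\>$ in it is a length-one remainder loop that is already counted. More generally, every term of the canonical expansion carries a power of $\ep$ equal to its number of $A$ factors, so no bookkeeping produces $\ep^N$ times a contribution of length $N+1$.

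You therefore cannot ``verify that the combinatorics agree'' with the last line of the statement: as displayed, that line is off by a factor of $\ep$. This is a misprint; likewise the leading term should be the scalar $\delta_{\bn\bm}=\<e_\bm,e_\bn\>$, not the vector $e_\bm$. The fix is to apply Theorem \ref{th_canonical_expansion} with $N+1$ in place of $N$. Then the ordinary eigenvector configurations run over $1\le|\cP|\le N$, as in the statement. The remainder configurations, together with the matrix elements $\ep^{N+1}\<e_\bn,\{R(\<\psi,A\psi_0\>-A)\}^{N+1}\psi\>$ of the singular term expanded via Remark \ref{rem_no_a4}, all have length exactly $N+1$ and carry $\ep^{N+1}$, matching the pattern of the eigenvalue formula. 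With the remainder read as $\ep^{N+1}$, your argument goes through.
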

\begin{proof}
The proof follows by inspection of that of Theorem \ref{th_canonical_expansion} in the language of matrix elements. In the standard basis, the operator $R$ is diagonal:
$$
R e_\bn=\begin{cases}
(d_\bm-d_\bn)^{-1}e_\bn,& \bn\neq\bm;\\
0,&\bn=\bm,
\end{cases}
$$
which provides the vertex and attachment factors. The form of the edge factors follows from the matrix multiplication rules expressed in the standard basis of $\ell^2(\I)$. In view of Remark \ref{rem_no_a4}, the singular term no longer requires special treatment.
\end{proof}
\begin{rem}
\label{rem_localization_preserving_2}
As mentioned earlier, the assumption of $A$ being of finite range can be replaced by both $A$ and $R$ being localization preserving, see Definition \ref{def_localization_preserving}.
\end{rem}
\begin{rem}
\label{remark_arbitrary_eigenfunction}
 It may seem somewhat counterintuitive that the expansions constructed above hold for every eigenvalue/eigenvector pair $(E,\psi)$ of $D+\ep A$ and every choice of the unperturbed vector $e_\bm$, as long as $\psi$ can be normalized in order to satisfy $\<\psi,e_{\bm}\>=1$. Moreover, the first $N$ terms of the expansion do not depend on $\psi$, which means that the remainder may be very large. Note that, in the case when $\psi$ represents an eigenvalue branch different from the one arising from $e_{\bm}$, the orthogonality condition in the limit $\ep\to 0$ combined with our choice of normalization would imply $\|\psi \|$ blowing up.

\end{rem}
\section{Differentiability and first order calculations}
In this section, we discuss two somewhat general consequences of the expansions from the previous section. The first one, Theorem \ref{th_smoothness}, states that if $D+tA$ is an operator on $\ell^2(\Z^d)$ with an eigenvector family exponentially localized in $t$ (without additional regularity assumptions besides an appropriate normalization), then the family is forced to be infinitely smooth in $t$, under some general conditions.

In the second subsection, we discuss in more detail the case of a first order expansion (with the remainder estimate of second order), as well as the particular case of a rank one perturbation, which will be useful in  more concrete applications of the later sections.
\subsection{Smoothness}
\label{subsection_smoothness}In order to discuss smoothness, we start from introducing several convenient concepts related to exponential localization.
\begin{defn}
We will say that $\psi\in \ell^2(\Z^d)$ is $(C,\gamma)$-{\it localized} at $\bn_0$ if 
\bee
\label{eq_psi_localized}
|\psi(\bn-\bn_0)|\le Ce^{-\gamma|\bn-\bn_0|},\quad\forall \bn\in \Z^d.
\ene
A family of vectors is {\it uniformly localized at $\bn_0$} if \eqref{eq_psi_localized} holds with some choice of $C,\gamma$ uniform in the family. In the case of $\bn_0$ being the origin, we will drop it from the notation and will just use the terminology {\it $(C,\gamma)$ $($uniformly$)$ localized}.
\end{defn}
\begin{defn}
\label{def_localization_preserving}
We will say that a (possibly unbounded) linear self-adjoint operator $R$ on $\ell^2(\mathbb Z^d)$ is {\it localization preserving} if for every $C,\gamma>0$ there exists $C',\gamma'>0$ such that $R\psi$ is defined and $(C',\gamma')$-localized whenever $\psi$ is $(C,\gamma)$-localized.    
\end{defn}

\begin{prop}
\label{prop_localization_preserving}
Under the above assumptions, let $Q_M:=\one_{\{\bn\in \Z^d\colon |\bn|>M\}}$. Then,
$$
\|R Q_M\psi\|\to 0,\quad \text{as}\quad M\to +\infty
$$
for every localized vector $\psi$ and every localization preserving operator $R$, and the convergence is uniform in $\psi$ over any uniformly localized family of vectors $\psi$.
\end{prop}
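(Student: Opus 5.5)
The plan is to combine linearity of $R$ with the fact that cutting off a localized vector outside a ball of radius $M$ makes it exponentially small in $M$, while it stays localized with a slightly worse rate. The constants in Definition \ref{def_localization_preserving} depend only on the localization parameters, not on the vector itself. So one rescaled family of vectors, all localized with the same constants, will do the job.

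\emph{Step 1: rescaling the cut-off vector.} Let $\psi$ be $(C,\gamma)$-localized; for a uniformly localized family, fix common $C,\gamma$. Then $(Q_M\psi)(\bn)=0$ for $|\bn|\le M$, and for $|\bn|>M$
$$
|(Q_M\psi)(\bn)|\le Ce^{-\gamma|\bn|}\le Ce^{-\gamma M/2}e^{-\gamma|\bn|/2}.
$$
Hence the vector $\varphi_M:=e^{\gamma M/2}Q_M\psi$ is $(C,\gamma/2)$-localized, with constants independent of $M$ and of $\psi$ in the family.

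\emph{Step 2: applying $R$.} By Definition \ref{def_localization_preserving} applied with the pair $(C,\gamma/2)$, there are $C',\gamma'>0$, depending only on $C,\gamma$, such that $R\varphi_M$ is defined and $(C',\gamma')$-localized for every $M$ and every $\psi$ in the family. Since $R$ is linear, $RQ_M\psi=e^{-\gamma M/2}R\varphi_M$ is also defined.

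\emph{Step 3: the estimate.} Summing the localization bound gives
$$
\|RQ_M\psi\|\le e^{-\gamma M/2}\,C'\Big(\sum_{\bn\in\Z^d}e^{-2\gamma'|\bn|}\Big)^{1/2}.
$$
The sum is finite and depends only on $\gamma'$ and $d$, so the right-hand side tends to $0$ as $M\to+\infty$, uniformly over the family.

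The only point needing care is that localization preservation gives no quantitative control of how $C'$ depends on $C$. The proof therefore avoids invoking it with $M$-dependent constants, and instead moves the small factor $e^{-\gamma M/2}$ outside of $R$ by linearity. I do not expect any other obstacle.
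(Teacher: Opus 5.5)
Your proof is correct and is essentially the paper's argument. The paper phrases localization preservation as boundedness of $R$ from a weighted space $\ell^2_\gamma(\Z^d)$ to $\ell^2_{\gamma'}(\Z^d)$ and uses that the weighted norm of $Q_M\psi$ tends to zero; you carry out the same estimate directly in weighted sup-norms, where your rescaling $e^{\gamma M/2}Q_M\psi$ together with linearity of $R$ plays the role of that boundedness.

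Your version has two small advantages over the paper's sketch. First, it makes explicit that part of the decay rate must be given up: a $(C,\gamma)$-localized vector need not lie in $\ell^2_\gamma$, a point the paper leaves implicit. Second, it yields the explicit rate $O(e^{-\gamma M/2})$, uniformly over the family.
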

\begin{proof}
Let
$$
\|\psi\|_{2,\gamma}^2:=\sum_{\bn\in \Z^d}(1+e^{\gamma|\bn|})^2|\psi(\bn)|^2,
$$
and let $\ell^2_\gamma(\Z^d)$ be the $\ell^2$ space with the above norm. Perhaps after a modification of $\gamma'$, every localization preserving operator becomes bounded if one considers it between $\ell^2_\gamma(\Z^d)$ and $\ell^2_{\gamma'}(\Z^d)$. Then, the conclusion follows from observing
$$
\|RQ_M\psi\|_2\le C(\gamma')\|RQ_M\psi\|_{2,\gamma'}\le C'(R,\gamma')\|Q_M\psi\|_{2,\gamma}\to 0,\quad \text{as}\quad M\to +\infty.\,\qedhere
$$
\end{proof}
We will now discuss two elementary examples of localization preserving operators, with the proof left to the reader.
\begin{prop}
\label{prop_localization_preserving_examples}
\begin{enumerate}
	\item A diagonal operator $R e_\bn=r_\bn e_\bn$ with polynomially bounded entries 
$$
|r_\bn|\le C(1+|\bn|)^{\tau}
$$
is localization preserving.
\item An operator with uniformly exponentially decaying matrix elements
$$
|\<R e_\bm,e_\bn\>|\le C e^{-\gamma|\bn-\bm|}
$$
is localization preserving.
\end{enumerate}
\end{prop}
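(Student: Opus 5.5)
For both parts the plan is to take a $(C,\gamma)$-localized $\psi$, bound $|(R\psi)(\bn)|$ pointwise, and read off explicit constants $(C',\gamma')$ depending only on $C,\gamma$ and the parameters of $R$. Implicitly $R$ is defined on all localized vectors by its matrix, and the bounds below show the defining sums converge absolutely.

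\emph{Part (1).} Here $(R\psi)(\bn)=r_\bn\psi(\bn)$, so
$$
|(R\psi)(\bn)|\le C(1+|\bn|)^{\tau}e^{-\gamma|\bn|}.
$$
Set $\gamma'=\gamma/2$ and
$$
C'=C\sup_{t\ge 0}(1+t)^{\tau}e^{-\gamma t/2}.
$$
This supremum is finite, since polynomial growth is dominated by exponential decay. This gives $|(R\psi)(\bn)|\le C'e^{-\gamma'|\bn|}$. Because $R\psi$ is square summable, $\psi$ lies in the domain of the diagonal self-adjoint operator $R$.

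\emph{Part (2).} Write
$$
(R\psi)(\bn)=\sum_{\bm}\<Re_\bm,e_\bn\>\psi(\bm),
$$
with the sum absolutely convergent. Let the matrix elements satisfy $|\<Re_\bm,e_\bn\>|\le C_0e^{-\gamma_0|\bn-\bm|}$; I rename the constants of the statement to avoid a clash with those of $\psi$. Then
$$
|(R\psi)(\bn)|\le C_0C\sum_{\bm}e^{-\gamma_0|\bn-\bm|-\gamma|\bm|}.
$$
Put $\beta=\min(\gamma_0,\gamma)/2$. By the triangle inequality,
$$
\gamma_0|\bn-\bm|+\gamma|\bm|\ge 2\beta\bigl(|\bn-\bm|+|\bm|\bigr)\ge \beta|\bn|+\beta\bigl(|\bn-\bm|+|\bm|\bigr)\ge \beta|\bn|+\beta|\bm|.
$$
Hence
$$
|(R\psi)(\bn)|\le C_0C\Bigl(\sum_{\bm\in\Z^d}e^{-\beta|\bm|}\Bigr)e^{-\beta|\bn|}.
$$
The sum is finite, depending only on $d$ and $\beta$. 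So one can take $\gamma'=\beta$ and $C'=C_0C\sum_\bm e^{-\beta|\bm|}$.

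The same estimate, applied to finitely supported vectors, gives absolute convergence and justifies the matrix representation on the domain. By Schur's test, $R$ is bounded on $\ell^2$, because its rows and columns are uniformly summable. Consequently $R\psi$ is defined for every $\psi\in\ell^2$ and coincides with the series above.

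There is no real obstacle here. The only point needing care is the choice of decay rate in part (2): the splitting $2\beta(|\bn-\bm|+|\bm|)\ge\beta|\bn|+\beta|\bm|$ is what trades half of the exponent for summability in $\bm$. A secondary point is noting that the self-adjoint operator is actually defined on localized vectors, and it is handled as above.
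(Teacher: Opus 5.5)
Your proof is correct and is the routine verification the paper intends (it leaves this proposition to the reader): in (1) you absorb the polynomial weight into half of the exponent, and in (2) you split the exponent with the triangle inequality, with Schur's test justifying that $R$ acts on localized vectors by its matrix. One small notational point: in part (1) you use $C$ both for the bound on $r_\bn$ and for the localization constant of $\psi$, so the pointwise bound should carry the product of these two constants.
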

\begin{thm}
\label{th_smoothness}
Under the assumptions and in the notation of Theorem $\ref{th_canonical_expansion}$, suppose that $R$  and $A$ are localization preserving on $\ell^2(\Z^d)$. Assume that $\psi(t)$ is a family of uniformly localized eigenvectors:
\bee
\label{eq_smoothness_1}
H(t)=(D+tA)\psi(t)=E(t)\psi(t),\quad \<\psi,\psi_0\>=1,\quad \forall t\in I,
\ene
where $0\in I\subset \R$ is an open interval and $\psi(0)=\psi_0$. Then, both $\psi$ and $E$ are infinitely differentiable at $0$. Moreover, the derivatives in $t$ at $0$ are given by the terms of the perturbation series, and the remainders in the Taylor formulas for $\psi$ are uniformly localized, with $C',\gamma'$ depending on the order of the derivative.
\end{thm}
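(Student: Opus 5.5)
The approach is to apply Theorem \ref{th_canonical_expansion} (or its abstract form \eqref{eq_psi_recursion_multiple}, \eqref{eq_energy_recursion}) at each fixed $t\in I$, with $\ep=t$, $\psi=\psi(t)$, $E=E(t)$, and to show that the remainder terms are $O(|t|^{N})$, resp.\ $O(|t|^{N+1})$, uniformly. Assumptions (A1)--(A3) hold by hypothesis. Since $A$ and $R$ are localization preserving and each $\psi(t)$ is $(C,\gamma)$-localized with $C,\gamma$ independent of $t$, every monomial in $A,R$ applied to $\psi(t)$ is defined. Moreover, it is $(C_k,\gamma_k)$-localized with constants depending only on the degree $k$, not on $t$. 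So (A4) holds uniformly in $t$. The finitely many coefficients in the expansion (contributions of abstract eigenvalue/eigenvector configurations of length $\le N$) involve only $\psi_0$, are independent of $t$, and are localized vectors or finite scalars.

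The key uniform estimate is that the scalar $\<\psi(t),A\psi_0\>$ is bounded uniformly in $t\in I$. This follows from $|\<\psi(t),A\psi_0\>|\le \|\psi(t)\|\,\|A\psi_0\|$ and the uniform localization bound on $\|\psi(t)\|$. I would then take the remainder in \eqref{eq_psicorr_th}, namely
\[
t^N\sum\cont(\cP)+t^N\{R(\<\psi(t),A\psi_0\>-A)\}^N\psi(t),
\]
and expand it via (A4) into finitely many configurations (their number depends only on $N$). Each configuration is a product of scalar factors $\<\psi(t),A\psi_0\>$ or fixed numbers, times a monomial in $A,R$ applied to $\psi_0$ or $\psi(t)$. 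By the preceding step each such term is $(C_N',\gamma_N')$-localized uniformly in $t$. Hence $\psi(t)=\sum_{j<N}t^j c_j+t^N r_N(t)$ with $c_j$ the RSS coefficients and $r_N(t)$ uniformly localized, in particular uniformly bounded in $\ell^2$. The same argument applied to \eqref{eq_ecorr_th} gives $E(t)=E_0+\sum_{j\le N}t^j e_j+t^{N+1}\rho_N(t)$ with $\rho_N$ bounded. For the remainder paths starting at $\psi$, the contribution is $\<\psi(t),\text{(localized vector)}\>$, which is bounded. For attachments at $\psi\to$, I would move $R$ and $A$ onto the other side using self-adjointness, which stays in the localized class.

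The step I expect to be the main obstacle is passing from ``Taylor polynomial plus $O(t^N)$ remainder for every $N$'' to genuine infinite differentiability, since such expansions alone do not imply smoothness (consider $e^{-1/t^2}\sin(e^{1/t^2})$). At $t=0$ they give differentiability to all orders in the Peano sense, with the derivatives equal to $j!\,c_j$. For derivatives to exist in a neighbourhood, I would repeat the argument centred at an arbitrary $t_0\in I$. There one takes $D'=D+t_0A$, $\psi_0'=\psi(t_0)/\|\psi(t_0)\|$, and renormalizes the family so that $\<\psi(t),\psi_0'\>=1$. This requires that $E(t_0)$ is a simple eigenvalue of $D'$ and that the projected resolvent $R'$ is localization preserving. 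Neither is given by the hypotheses, so if this fails I would settle for interpreting the claim as infinite differentiability at $0$, in the sense of Taylor expansions of all orders. That is exactly what the statement asserts (``infinitely differentiable at $0$'', derivatives equal to RSS coefficients, remainders uniformly localized), and I would present it that way. Uniform localization of the remainder $r_N(t)$, with constants depending on $N$, then follows directly from the estimate above.
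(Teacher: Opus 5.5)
Your proposal is correct and follows the paper's own argument: apply Theorem \ref{th_canonical_expansion} with $\ep=t$, and observe that every remainder term is a bounded scalar (built from $\<\psi(t),\cdot\>$) times finitely many applications of $A$ and $R$ to uniformly localized vectors, so it is uniformly localized in $t$. Your caveat is also well taken: the argument gives Taylor (Peano) expansions of all orders at $0$ with the RSS coefficients, which is exactly what the paper's one-line proof establishes, while smoothness on an interval is obtained only afterwards in Corollary \ref{cor_uniform_smoothness}, by recentring at each $t$ under extra hypotheses (simplicity of $E(t)$ and the non-orthogonality condition \eqref{eq_branch_switch} that your renormalization requires).
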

\begin{proof}
Theorem \ref{th_canonical_expansion} provides a Taylor expansion of $\psi(t)$ and $E(t)$ near the origin. Note that the remainder is uniformly bounded in $t$ since it involves finitely many repeated applications of $R$ and $A$ to vectors which are known to be uniformly exponentially localized.
\end{proof}
\begin{rem}
\label{rem_one_sided}
Theorem \ref{th_smoothness} can easily be extended to the case of half-open intervals and one-sided derivatives at the corresponding endpoints.
\end{rem}
\begin{cor}
\label{cor_uniform_smoothness}
Under the assumptions and in the notation of Theorem \ref{th_canonical_expansion}, suppose that $R$ and $A$ are localization preserving on $\ell^2(\Z^d)$. Assume that $\psi(t)$ is a family of uniformly localized eigenvectors, with each eigenvalue $E(t)$ being simple:
\bee
\label{eq_smoothness_2}
H(t)=(D+tA)\psi(t)=E(t)\psi(t),\quad \forall t\in I.
\ene
In addition, assume that for every $t\in I$ there are $\delta(t),c(t)>0$ such that
\bee
\label{eq_branch_switch}
|\<\psi(s),\psi(t)\>|\ge c(t),\quad \forall s\in (t-\delta(t),t+\delta(t)).
\ene
Then, $E(\cdot)$ is infinitely smooth on $I$, and $\psi(\cdot)$ can be normalized to be smooth on $I$.
\end{cor}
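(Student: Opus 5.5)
The plan is to reduce the corollary to Theorem \ref{th_smoothness}, applied at every base point $t_0\in I$ instead of only at $0$. Fix $t_0\in I$ and write $H(t)=H(t_0)+(t-t_0)A$, so that $H(t_0)$ plays the role of $D$ and $s=t-t_0$ plays the role of the small parameter. Assumption (A1) at $t_0$ holds because $E(t_0)$ is assumed simple with eigenvector $\psi(t_0)$. Assumption (A2) is exactly where \eqref{eq_branch_switch} enters. For $|s|<\delta(t_0)$ we have $\<\psi(t_0+s),\psi(t_0)\>\neq 0$, so the rescaled vector
$$
\tilde\psi(t_0+s):=\psi(t_0+s)/\<\psi(t_0+s),\psi(t_0)\>
$$
satisfies the normalization \eqref{eq_psi_def} with $\psi_0=\psi(t_0)$. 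It remains uniformly localized, since the denominator is bounded below by $c(t_0)$. Hence, once (A3) and the localization-preserving hypotheses hold at $t_0$, Theorem \ref{th_smoothness} gives that $E$ and $\tilde\psi$ are $C^\infty$ at $t_0$, with uniformly localized Taylor remainders.

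The genuinely new point, and the main obstacle, is that the projected resolvent $R_{t_0}=(H(t_0)-E(t_0))^{-1}P_{t_0}$ must be shown to be localization preserving using only that $R=R_0$ and $A$ are. My first attempt is a continuation argument along $I$. Let $J\subset I$ be the set of $t_0$ for which $R_{t_0}$ is localization preserving; $J$ contains $0$, and I would show it is open and closed in $I$. For openness I would write the resolvent identity relative to the rank-one-corrected projections,
$$
R_{t}=R_{t_0}+R_{t_0}\bigl(E(t)-E(t_0)-(t-t_0)A\bigr)R_{t}+(\text{rank-one terms in }\psi(t),\psi(t_0)),
$$
and solve it by iteration in a weighted space $\ell^2_\gamma\to\ell^2_{\gamma'}$, as in the proof of Proposition \ref{prop_localization_preserving}. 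The rank-one terms are harmless because the vectors $\psi(t)$ are uniformly localized. However, the Neumann series needs smallness of $R_{t_0}(E(t)-E(t_0)-(t-t_0)A)$ in these weighted norms. Since $R_{t_0}$ is unbounded in the non-isolated case, this is the step I expect to fail unless the weights lose a controlled amount at each step. I would try to exploit that only finitely many iterations are needed at each order, in the spirit of Theorem \ref{th_canonical_expansion}, rather than full convergence.

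If this continuation cannot be closed, the fallback is to read the hypothesis of Corollary \ref{cor_uniform_smoothness} as stated "in the notation of Theorem \ref{th_canonical_expansion}". That is, at each $t_0$ the operator $D$ is $H(t_0)$ and $R$ is $R_{t_0}$; I would make this reading explicit and note that it is what holds in the applications of Section 4. Granting localization preservation at each $t_0$, the remaining steps are routine. First, $E$ is $C^\infty$ near each $t_0$, hence on $I$, since smoothness is a local property and $E$ does not depend on the normalization. Second, to normalize $\psi$ globally, cover $I$ by the intervals $(t_0-\delta(t_0),t_0+\delta(t_0))$, on each of which $\tilde\psi$ is smooth. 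On overlaps, two local normalizations differ by a scalar factor $\lambda(t)=\<\tilde\psi_1(t),\psi_0^{(2)}\>^{-1}$, which is smooth and nonvanishing. Third, because $I$ is an interval, a locally finite chain of such neighbourhoods allows these factors to be multiplied successively, and finally rescaled to unit norm, producing a single smooth branch $\psi(\cdot)$ on all of $I$.
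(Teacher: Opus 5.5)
Your fallback is, in substance, the paper's argument. The paper simply applies Theorem~\ref{th_smoothness} at every $t\in I$ with $H(t)$ in the role of $D$. In doing so it tacitly reads the localization-preserving hypothesis as holding for the projected resolvent of $H(t)$ at every base point; this is what Lemma~\ref{lemma_localization_preserving} supplies for every $\ep$ in the application, Corollary~\ref{cor_perturbation}. Like you, it normalizes by $\<\psi(s),\psi(t)\>$ using \eqref{eq_branch_switch}.

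You were right to flag this reading. You should drop the continuation attempt rather than try to close it, because the data at $t=0$ do not control how fast other eigenvalues of $H(t_0)$ accumulate at $E(t_0)$. For example, on $\ell^2(\Z)$ with $I=(-1,2)$, let $D e_0=A e_0=0$. On adjacent pairs of the remaining sites, let $D$ and $A$ act by the blocks $\begin{pmatrix}1&\epsilon_n\\ \epsilon_n&-1\end{pmatrix}$ and $\mathrm{diag}(-1,1)$, with $\epsilon_n=e^{-n^2}$. Then $\psi(t)\equiv e_0$ and $E(t)\equiv 0$ satisfy all hypotheses, and $R_0$ is bounded and of finite range. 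The block eigenvalues of $H(t)$ are $\pm\sqrt{(1-t)^2+\epsilon_n^2}$, so $R_1$ has entries $\epsilon_n^{-1}$. Hence your set $J$ is $I\setminus\{1\}$ and need not be closed.

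The one step that does not follow as written is your claim that $\tilde\psi$ is smooth on the whole interval $(t_0-\delta(t_0),t_0+\delta(t_0))$. Theorem~\ref{th_smoothness} only gives smoothness of $\tilde\psi_{t_0}(s):=\psi(s)/\<\psi(s),\psi(t_0)\>$ at the single point $s=t_0$. For $E$ this suffices, because $E$ does not depend on the normalization. For $\psi$ the normalization changes with the base point, and your patching on overlaps presupposes smoothness on whole neighbourhoods: the factor $\lambda$ is smooth on an overlap only if $\tilde\psi_1$ already is.

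This missing step is the core of the paper's proof, and your own overlap identity supplies it. For $s_1$ in that interval and $s$ near $s_1$, $\tilde\psi_{t_0}(s)=\tilde\psi_{s_1}(s)/\<\tilde\psi_{s_1}(s),\psi(t_0)\>$. Here $\tilde\psi_{s_1}$ is smooth at $s_1$ by Theorem~\ref{th_smoothness} applied at $s_1$. The denominator does not vanish at $s=s_1$, because $|\<\psi(s_1),\psi(t_0)\>|\ge c(t_0)$. Hence $\tilde\psi_{t_0}$ is smooth at every point of its interval. Only then does your gluing over a cover of $I$ go through; this is the paper's local compactness step.
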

\begin{proof}
The smoothness of $E$ follows from Theorem \ref{th_smoothness} applied at every particular $t$. In order to obtain smoothness of $\psi$, first observe the following: if it is already known that a smooth normalization exists, then, in order to verify smoothness of an arbitrary normalization, it is sufficient to check smoothness of any of its non-zero components. More precisely, fix some $\varphi \in \ell^2$ with $\<\varphi,\psi\>\neq 0$. Suppose that there exists a normalization of $\psi$ that is smooth at $s$. Since any two normalizations differ by a scalar factor, an arbitrary normalization $\widetilde \psi$ will be smooth at $s$ if and only if $\<\varphi,\widetilde\psi(\cdot)\>$ is smooth at $s$. 

Now, fix some $t\in I$ and denote
\bee
\label{eq_normalization}
\widetilde\psi(s):=\frac{\psi(s)}{\<\psi(s),\psi(t)\>},\quad s\in (t-\delta(t),t+\delta(t)).
\ene
The above normalization satisfies the assumptions of Theorem $\ref{th_smoothness}$ at $s=t$ and is therefore differentiable at $s=t$. As a consequence, for every particular $t$ there exists a normalization smooth at $t$. On the other hand, for a fixed $t$ the normalization \eqref{eq_normalization} satisfies $\<\widetilde\psi(s),\varphi\>\equiv 1$ with $\varphi=\psi(t)$. In view of the earlier observation, \eqref{eq_normalization} must be smooth for all $s$ within the range. As a consequence, a smooth parametrization exists locally near every point $t\in I$, and the general result follows from local compactness of $I$.
\end{proof}
\subsection{The explicit calculations in the first order case}\label{subsection_first_order}
For the most important applications, we will require some more explicit calculations regarding the first order perturbation and error estimates. One can consider it as a form of Hellmann--Feynman theorem with an explicit remainder without an a priori differentiability assumption. 

We will perform the calculations in the operator language of Subsection \ref{subsection_canonical_expansions}. From \eqref{eq_psi_recursion_multiple}, we have for $N=2$:
\begin{multline}
\label{eq_psi_first_order}
\psi=\psi_0-\ep RA\psi_0+\ep^2\{R\<\psi,A\psi_0\>-RA\}^2\psi=\\=(1-\ep RA)\psi_0+\ep^2\{R\<\psi,A\psi_0\>-RA\}^2\psi=\psi_0+\ep\l(\psi_0\xrightarrow{} \psi_0\r)+\\+\ep^2\l((\psi\xrightarrow{}\psi_0)(\psi\xrightarrow{}\psi_0)\dashrightarrow\psi+\circ\xrightarrow{}(\psi\xrightarrow{}\psi_0)\dashrightarrow{}\psi+
(\psi\xrightarrow{}\psi_0)\xrightarrow{}\psi+\circ\xrightarrow{2}\psi\r).
\end{multline}
Some of the terms in the above expression were removed due to the fact that $R\psi_0=0$. Note that one automatically gets the second order expansion by replacing all appearances of $\psi$ in the last term by $\psi_0$ (which will lead to vanishing of the first two terms of order $\ep^2$ by the same reasons).

One also has
\begin{multline}
\label{eq_E_first_order}
E_{\corr}=\ep \<\psi,A\psi_0\>=\ep \<\psi_0,A\psi_0\>+\ep^2 \<\psi, \l\{\<\psi,A\psi_0\>R-AR\r\}\psi_0\>=\\=\ep(\psi_0\to \psi_0)+\ep^2\l(\psi\dashrightarrow(\psi\to\psi_0)\to \psi_0+\psi\xrightarrow{2}\psi_0\r).
\end{multline}
Suppose now that
$$
A=\<\varphi,\cdot\>\varphi,\quad\|\varphi\|=1,
$$
is of rank one. In this case, we have
\begin{multline}
\label{eq_psi_rank_one}
\psi=\psi_0-\ep \<\varphi,\psi_0\>R\varphi+\\
+\ep^2 \<\varphi,\psi\>\{\<\varphi,\psi_0\>^2\<\varphi,\psi\> R^2\psi -\ep^2\<\varphi,\psi_0\>\<\varphi,\psi\>R^2\varphi-\<\varphi,\psi_0\>\<R\psi,\varphi\>R\varphi+\<R\varphi,\varphi\>R\varphi\}.
\end{multline}
\section{Quasiperiodic operators with monotone potentials: smoothness and gap labeling}
\subsection{The setting}
In the remaining sections, we will apply the results of Sections 1 -- 3 to multi-dimensional quasiperiodic operators with monotone potentials, in the perturbative localization regime considered in \cite{KPS3}. Let
\bee
\label{eq_h_def}
(H\psi)(\bn)=\varepsilon(\Delta\psi)(\bn)+V(\bn)\psi(\bn),\quad V(\bn)=f(x+\bn\cdot\omega).
\ene
While the following setting is not the most general possible, we will restrict ourselves to the following class of {\it Lipschitz monotone} functions. Let $f\colon \R\to [-\infty,+\infty)$ be $1$-periodic and satisfying
\bee
\label{eq_monotonicity}
f(y)-f(x)\ge (y-x),\quad \text{for}\quad 0\le x\le y<1.
\ene
Fix $C_{\dio},\tau_{\dio}>0$ and assume that $\omega\in \R^d$ is a Diophantine frequency vector:
\bee
\label{eq_dio_def}
\omega\in \mathrm{DC}:=\l\{\omega\in [0,1)^d\colon \|\bn\cdot\omega\|:=\dist(\bn\cdot\omega,\Z)\ge C_\dio|\bn|^{-\tau_\dio} ,\bn\neq \bze\r\}.
\ene
Let also $T^{\bn}$ be the translation operator
$$
T^\bn(\psi)(\bm):=\psi(\bm+\bn).
$$
In \cite{KPS3}, it is shown that the the family \eqref{eq_h_def} satisfies uniform localization for small $\varepsilon$ depending on $\omega$. More precisely, the following is the main result of \cite{KPS3} in the above case:
\begin{prop}
\label{prop_main_kps3} Let $\delta>0$. There exists $\ep_0=\ep_0(d,C_{\dio},\tau_{\dio},\delta)>0$ such that for every $\ep\in (0,\ep_0)$, $\omega\in \mathrm{DC}$, and $f$ satisfying \eqref{eq_monotonicity}, one can find a $1$-periodic function $E\colon \R\to[-\infty,+\infty)$ also satisfying \eqref{eq_monotonicity}, and a $1$-periodic measurable function $\psi\colon [0,1)\to \ell^2(\Z^d)$ such that
\bee
\label{eq_sinai_eigenfunction}
H(x)\psi(x)=E(x)\psi(x),\quad\forall x\in \R;
\ene
\bee
\label{eq_uniformly_localized}
\|\psi(x)\|_{\ell^2(\Z^d)}=1,\quad |\psi(x,\bze)-1|<\ep^{1-\delta};\quad |\psi(x;\bn)|\le \ep^{(1-\delta)|\bn|}\quad \text{for}\quad |\bn|\neq 0.
\ene
As a consequence, for every $x\in \R$ one has
\bee
\label{eq_covariance}
H(x)(T^{\bn}\psi(x-\bn\cdot\omega))=E(x-\bn\cdot\omega)(T^{\bn}\psi(x-\bn\cdot\omega)).
\ene
The corresponding family
\bee
\label{eq_eigenvectors}
\psi_{\bn}(x):=T^{\bn}\psi(x-\bn\cdot\omega),\quad \bn\in\Z^d,
\ene
is an orthonormal basis of eigenvectors of $H(x)$, with eigenvalues 
\bee
\label{eq_eigenvalues}
E_{\bn}(x):=E(x-\bn\cdot\omega),
\ene
respectively. In particular, the spectrum of $H(x)$ is pure point and simple,  with uniformly exponentially localized eigenfunctions.
\end{prop}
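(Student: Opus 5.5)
The first part of Proposition \ref{prop_main_kps3} (existence of $E$, $\psi$ with \eqref{eq_sinai_eigenfunction}, \eqref{eq_uniformly_localized}) is the main result of \cite{KPS3}, which the paper takes as given. So the plan is to assume it and derive the remaining claims: the covariance relation \eqref{eq_covariance}, the orthonormal basis property of the family \eqref{eq_eigenvectors}, and simplicity of the spectrum. The only \emph{inputs} are the covariance of $H(x)$ under lattice shifts and the strict monotonicity of $E$.

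\emph{Covariance.} A direct computation gives
$$(T^{-\bn}H(x)T^{\bn}\varphi)(\bm)=\ep(\Delta\varphi)(\bm)+f(x+(\bm-\bn)\cdot\omega)\varphi(\bm),$$
since $\Delta$ commutes with translations. Hence $H(x)T^{\bn}=T^{\bn}H(x-\bn\cdot\omega)$, with the sign of the shift fixed by the convention $T^{\bn}\psi(\bm)=\psi(\bm+\bn)$. Applying this identity to $\psi(x-\bn\cdot\omega)$ and using \eqref{eq_sinai_eigenfunction} at the point $x-\bn\cdot\omega$ gives \eqref{eq_covariance}. This identifies $\psi_\bn(x)$ as an eigenvector of $H(x)$ with eigenvalue $E_\bn(x)$.

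\emph{Distinct eigenvalues, orthonormality and simplicity.} Since $E$ satisfies \eqref{eq_monotonicity}, it is strictly increasing on $[0,1)$. For $\bn\ne\bm$, rational independence of $\{1,\omega_1,\ldots,\omega_d\}$ gives $\{x-\bn\cdot\omega\}\neq\{x-\bm\cdot\omega\}$, so $E_\bn(x)\ne E_\bm(x)$. Eigenvectors of the self-adjoint operator $H(x)$ with distinct eigenvalues are orthogonal, and each $\psi_\bn(x)$ has norm one because translations are unitary. Thus $\{\psi_\bn(x)\}_{\bn}$ is an orthonormal family of eigenvectors with pairwise distinct eigenvalues. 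Once completeness is established, any eigenvector for a value $E_\bn(x)$ lies in the span of the $\psi_\bm(x)$ with that eigenvalue, which is just $\psi_\bn(x)$. This gives simplicity and pure point spectrum.

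\emph{Completeness (main obstacle).} I would compare with the standard basis. Write $\psi_\bn(x)=(1+a_\bn)e_\bn+r_\bn$ with $r_\bn\perp e_\bn$. By \eqref{eq_uniformly_localized}, $|a_\bn|<\ep^{1-\delta}$ and $|\psi_\bn(x)(\bm)|\le\ep^{(1-\delta)|\bm-\bn|}$ for $\bm\ne\bn$. The operator $U e_\bn:=\psi_\bn(x)$ is then an isometry whose matrix differs from the identity by a matrix with entries bounded by $C\ep^{(1-\delta)\max(1,|\bm-\bn|)}$. By the Schur test, $\|U-\one\|\le C_d\ep^{1-\delta}<1$ for small $\ep$, so $U$ is invertible. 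An invertible isometry is unitary, hence the family is complete.

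The delicate points are the bookkeeping for the normalization of $\psi(x,\bze)$, and the fact that $x-\bn\cdot\omega$ must be reduced modulo $1$ using periodicity; the estimate \eqref{eq_uniformly_localized} is assumed uniform in $x$, so this reduction costs nothing. Uniform exponential localization of all eigenfunctions then follows directly from \eqref{eq_uniformly_localized} transported by $T^\bn$.
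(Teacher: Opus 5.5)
Your argument is correct, but the paper does not prove this statement. It quotes the whole proposition, including the ``as a consequence'' part, from \cite{KPS3}, where the eigenvectors come out of a perturbative diagonalization of $H(x)$, so the orthonormal basis property is part of that construction.

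You import only the single-branch statement \eqref{eq_sinai_eigenfunction}--\eqref{eq_uniformly_localized} together with the monotonicity of $E$, and derive everything else:
\begin{enumerate}
\item covariance from $T^{-\bn}H(x)T^{\bn}=H(x-\bn\cdot\omega)$;
\item orthogonality from injectivity of $E$ on $[0,1)$ combined with rational independence of $1,\omega_1,\ldots,\omega_d$;
\item completeness from the fact that an isometry $U$ with $\|U-\one\|\le C_d\ep^{1-\delta}<1$ (Schur test) is invertible, hence unitary.
\end{enumerate}
This shows that the basis property really is a consequence of having one uniformly localized branch with a strictly monotone eigenvalue function. It also makes explicit what monotonicity of $E$ contributes: it is what yields orthogonality, since the near-identity estimate alone would only give a Riesz basis. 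The paper leaves this implicit.

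One small slip needs fixing. With the convention $T^{\bn}\psi(\bm)=\psi(\bm+\bn)$, which you used correctly in the covariance step, you get $\psi_\bn(x;\bm)=\psi(x-\bn\cdot\omega;\bm+\bn)$. So $\psi_\bn(x)$ is concentrated at $-\bn$, not at $\bn$. The comparison map should therefore be $Ue_{-\bn}:=\psi_\bn(x)$, and the off-diagonal bound is $\ep^{(1-\delta)|\bm+\bn|}$. The same sign inconsistency is already present in Remark \ref{rem_notation}, and it does not affect your Schur estimate.
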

\begin{rem}
\label{rem_notation}
In the notation $\psi_\bn(x)$, $\bn\in\Z^d$ is the label of the eigenfunction (the eigenfunction with label $\bn$ will always be localized at $\bn\in \Z^d$), and $x$ is the phase. The value of the eigenfunction $\psi_\bn(x)$ at the point $\bm\in \Z^d$ will be denoted by
$$
\psi_\bn(x;\bm):=\<\psi_\bn(x),e_\bm\>.
$$
\end{rem}
\begin{rem}
\label{rem_monotonicity_constant}
By rescaling, one can always replace \eqref{eq_monotonicity} by
\bee
\label{eq_rem43}
f(y)-f(x)\ge c(y-x),\quad \text{for}\quad 0\le x\le y<1,
\ene
and the estimates can be made uniform in $c$ assuming that the range of its values is a compact subset of $(0,+\infty)$.
\end{rem}
\subsection{Smoothness of eigenvalues and eigenvectors}
The following lemma serves as a preparation for applying perturbation theory.
\begin{lem}
\label{lemma_localization_preserving}
Under the assumptions of Proposition $\ref{prop_main_kps3}$, for every $\bn\in \Z^d$, the projected resolvent
$$
R_{\bn}(x):=(H(x)-E_\bn(x)\one)^{-1}(\one-\<\psi_\bn(x),\cdot\>\psi_\bn(x))
$$
is localization preserving, uniformly in $x$ $($but not necessarily in $\bn$$)$.
\end{lem}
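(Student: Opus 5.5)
Since $\{\psi_\bk(x)\}_{\bk\in\Z^d}$ is an orthonormal eigenbasis of $H(x)$ by Proposition \ref{prop_main_kps3}, I would diagonalize $R_\bn(x)$ in this basis:
$$
R_\bn(x)\varphi=\sum_{\bk\neq\bn}\bigl(E_\bk(x)-E_\bn(x)\bigr)^{-1}\<\varphi,\psi_\bk(x)\>\psi_\bk(x).
$$
Let $U(x)$ be the unitary change of basis $e_\bk\mapsto\psi_\bk(x)$. Then $R_\bn(x)=U(x)\,\Lambda_\bn(x)\,U(x)^*$, where $\Lambda_\bn(x)$ is diagonal with entries $r_\bk=(E_\bk(x)-E_\bn(x))^{-1}$ for $\bk\neq\bn$ and $r_\bn=0$. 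The plan is to show that $U(x)$, $U(x)^*$ and $\Lambda_\bn(x)$ are all localization preserving with constants uniform in $x$. A composition of localization preserving operators is localization preserving with composed constants, so this gives the lemma.

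\textbf{Step 1: $U$ and $U^*$.} Their matrix elements are $\<U e_\bk,e_\bm\>=\psi_\bk(x;\bm)=\psi(x-\bk\cdot\omega;\bm+\bk)$ up to the sign convention of $T^\bk$. By \eqref{eq_uniformly_localized} these satisfy $|\psi_\bk(x;\bm)|\le C\ep^{(1-\delta)|\bm-\bk|}$ uniformly in $x$. The matrix of $U^*$ has the same bound, so Proposition \ref{prop_localization_preserving_examples}(2) applies to both, with $\gamma=(1-\delta)|\log\ep|$. The only thing to check is that this $\gamma$ is larger than the localization rates of the vectors we apply the operators to. If it is not, I would use the elementary convolution estimate $\sum_\bk e^{-\gamma_1|\bm-\bk|}e^{-\gamma_2|\bk|}\le C e^{-\min(\gamma_1,\gamma_2)'|\bm|}$, which loses a little in the exponent and only yields some $\gamma'>0$. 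That suffices, since Definition \ref{def_localization_preserving} allows any $\gamma'$.

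\textbf{Step 2: the diagonal part.} I need the polynomial bound $|E_\bk(x)-E_\bn(x)|^{-1}\le C_\bn(1+|\bk|)^{\tau}$ for $\bk\neq\bn$, uniformly in $x$, and then Proposition \ref{prop_localization_preserving_examples}(1) finishes the argument. Since $E$ satisfies \eqref{eq_monotonicity} and is $1$-periodic, for any two points $y,z$ we have $|E(y)-E(z)|\ge \|y-z\|$, the distance to $\Z$. Indeed, if $y,z\in[0,1)$ with $z<y$, then $E(y)-E(z)\ge y-z$ directly. I need to handle $y-z$ close to $1$ carefully, because the jump at the discontinuity does not obviously give $1-(y-z)$. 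I would use the bound $|E(y)-E(z)|\ge\min(y-z,\,\ldots)$, and noting that $y-z\ge\|y-z\|$ suffices: this gives $|E(y)-E(z)|\ge|y-z|\ge\|y-z\|$ whenever both points are reduced to $[0,1)$, and reduction mod $1$ only changes $y-z$ by an integer.

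Applying this with $y=x-\bk\cdot\omega$ and $z=x-\bn\cdot\omega$ and using the Diophantine condition \eqref{eq_dio_def} gives
$$
|E_\bk(x)-E_\bn(x)|\ge \|(\bk-\bn)\cdot\omega\|\ge C_\dio|\bk-\bn|^{-\tau_\dio}.
$$
Hence $|r_\bk|\le C_\dio^{-1}|\bk-\bn|^{\tau_\dio}\le C_\bn(1+|\bk|)^{\tau_\dio}$. This constant depends on $\bn$ but not on $x$, which matches the statement. A subtle case is $E=-\infty$ at some points, when $f$ takes the value $-\infty$. There the eigenvalue formalism degenerates, so I would restrict to phases with finite values, or note that the setting excludes them.

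\textbf{Main obstacle.} The main obstacle is the bookkeeping in Step 1: ensuring that the exponential rates compose to give some positive $\gamma'$ for every input rate $\gamma$. With the convolution estimate this is routine. Uniformity in $x$ follows because every bound used depends only on the constants in \eqref{eq_uniformly_localized} and \eqref{eq_dio_def}.
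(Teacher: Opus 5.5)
Your proposal is correct and is essentially the paper's proof: write $R_\bn(x)=U\Lambda_\bn U^*$ in the eigenbasis, then bound the diagonal entries by $|E_\bk(x)-E_\bn(x)|^{-1}\le C_\dio^{-1}|\bk-\bn|^{\tau_\dio}$ using monotonicity of $E$ and the Diophantine condition, so that Proposition~\ref{prop_localization_preserving_examples}(1) applies, and use the uniform localization \eqref{eq_uniformly_localized} to see that $U$ and $U^*$ are localization preserving. Your reduction modulo $1$, which gives $|E(y)-E(z)|\ge\|y-z\|$, is the step the paper compresses into a single (slightly garbled) display, and your convolution remark supplies the composition bookkeeping that the paper leaves implicit.
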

\begin{proof}
We have
\bee
\label{eq_Q_expansion}
R_{\bn}(x)=\sum_{\bn'\in \Z^d\setminus\{\bn\}}(E_{\bn'}(x)-E_{\bn}(x))^{-1}\<\psi_{\bn'}(x),\cdot\>\psi_{\bn'}(x).
\ene
In other words, in the basis $\{\psi_\bn(x)\colon \bn\in \Z^d\}$, $R_\bn(x)$ is a diagonal operator. From 
proposition \ref{prop_main_kps3}, the function $E$ satisfies the monotonicity condition \eqref{eq_monotonicity}:
\bee
\label{eq_E_monotonicity}
E(y)-E(x)\ge (y-x),\quad \text{for}\quad 0\le x\le y<1.
\ene
From \eqref{eq_dio_def} and \eqref{eq_eigenvalues} we now have
\bee
|E_{\bn'}(x)-E_{\bn}(x)|^{-1}=|E(x+\bn'\cdot\omega)-E(x+\bn\cdot\omega)|^{-1}\le|(\bn-\bn')\cdot\omega|\le  C_\dio^{-1}|\bn-\bn'|^{\tau_\dio}.
\ene
As a consequence of Proposition \ref{prop_localization_preserving_examples}, the diagonal operator with such entries is localization preserving. On the other hand, from \eqref{eq_uniformly_localized} it follows that both the operator of change of basis from $\{e_\bn\}$ to $\{\psi_\bn(x)\}$ and its inverse are also localization preserving. As a consequence, $R_\bn(x)$ is a composition of localization preserving operators and therefore is itself localization preserving.
\end{proof}
\begin{rem}
\label{rem_uniformity}
The fact that the conclusion of Lemma \ref{lemma_localization_preserving} is not uniform an $\bn$ is a consequence of the fact that the definition of localization preserving is stated in terms of localization relative to the origin. A uniform version can be obtained by an appropriate restatement that considers $\bn$ as the localization center. However, the same result can also be obtained by using the covariance relation
$$
R_{\bn}(x)=T^{\bn}R_{\bze}(x+\bn\cdot\omega)T^{-\bn}. 
$$
\end{rem}
\begin{cor}
\label{cor_perturbation}
The eigenvalue and eigenvector branches of $H(x)$ constructed in Proposition $\ref{prop_main_kps3}$ are infinitely smooth in $\varepsilon$, where the smoothness in the case of eigenvectors is considered in terms of the operator norm. The derivatives can be calculated as partial sums of the corresponding Rayleigh -- Schr\"odinger series.
\end{cor}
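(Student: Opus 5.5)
The plan is to apply Corollary \ref{cor_uniform_smoothness}, with Theorem \ref{th_smoothness} underneath it, at every point of the parameter interval $I=[0,\ep_0)$. We fix $x$, $\bn$ and $\ep_*\in[0,\ep_0)$, and write $H_\ep(x)=H_{\ep_*}(x)+(\ep-\ep_*)\Delta$.

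\textbf{Setting up the abstract framework at $\ep_*$.} We take $D=H_{\ep_*}(x)$, $A=\Delta$ and $t=\ep-\ep_*$, with unperturbed eigenvector $\psi_0=\psi_\bn(x)$ at $\ep_*$. The assumptions are checked as follows.
\begin{enumerate}
\item $A=\Delta$ has finite range, so it is localization preserving by Proposition \ref{prop_localization_preserving_examples}.
\item The eigenvalue $E_\bn(x)$ is simple, because by Proposition \ref{prop_main_kps3} the eigenvalues $E(x-\bn'\cdot\omega)$ are distinct: $E$ satisfies \eqref{eq_monotonicity} and $\omega$ is rationally independent. This gives (A1).
\item The projected resolvent $R=R_\bn(x)$ is localization preserving by Lemma \ref{lemma_localization_preserving}. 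This gives (A3), and also (A4) for localized $\psi$.
\item The branch $\psi_\bn(x)$ at parameter $\ep$ is uniformly localized in $\ep$ by \eqref{eq_uniformly_localized}, since the constants there are uniform in $\ep\in(0,\ep_0)$.
\end{enumerate}
At $\ep_*=0$ we use the one-sided version from Remark \ref{rem_one_sided}. There $D$ is diagonal with entries $f(x+\bn'\cdot\omega)$, and simplicity and the Diophantine resolvent bound follow directly from \eqref{eq_monotonicity} for $f$.

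\textbf{Non-orthogonality \eqref{eq_branch_switch}.} For $s$ and $t$ in $I$ we need $|\<\psi_\bn^{(s)}(x),\psi_\bn^{(t)}(x)\>|\ge c>0$. By \eqref{eq_uniformly_localized} both vectors have the component at $\bn$ within $\ep_0^{1-\delta}$ of $1$ and tails bounded by $\ep_0^{(1-\delta)|\bm-\bn|}$. So for $\ep_0$ small the inner product is at least $1/2$ globally, not just locally.

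We can then renormalize by $\<\cdot,\psi_0\>=1$ near $\ep_*$ and apply Theorem \ref{th_smoothness}. The Taylor remainders in \eqref{eq_ecorr_th}–\eqref{eq_psicorr_th_a4} are finite sums of monomials in $R$ and $A$ applied to uniformly localized vectors. By Proposition \ref{prop_localization_preserving} they are therefore uniformly localized, hence bounded in $\ell^2$, and $O(t^{N})$. This gives $C^\infty$ smoothness of $E_\bn$ and $\psi_\bn$ in $\ep$. For $\psi_\bn$ this holds in $\ell^2$ norm, and in the operator norm of the rank-one projections $\<\psi_\bn,\cdot\>\psi_\bn$, which we take to be what is meant by operator norm. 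Corollary \ref{cor_uniform_smoothness} then globalizes the normalization. At $\ep_*=0$ the derivatives are the Rayleigh–Schr\"odinger coefficients, namely the non-remainder terms of Theorem \ref{th_lattice_diagram_expansion}.

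\textbf{Main obstacle.} The delicate point is that Proposition \ref{prop_main_kps3} supplies the eigenvector family only as some measurable choice. We need the branch at $\ep$ to be the one continuing the branch at $\ep_*$, with the same label $\bn$, rather than another eigenvector with nonzero overlap. Remark \ref{remark_arbitrary_eigenfunction} warns that a wrong branch makes the remainder blow up. Here the uniform localization at the common center $\bn$ identifies the branch uniquely, since any other $\psi_{\bn'}$ is nearly orthogonal to it. This is exactly what makes the uniform localization hypothesis of Theorem \ref{th_smoothness} usable.

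A second point to check is that the localization-preserving constants of $R_\bn(x)$ in Lemma \ref{lemma_localization_preserving} are uniform in $\ep_*$. This should follow because the monotonicity constant of $E$ and the localization constants are uniform in $\ep$.
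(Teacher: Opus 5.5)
Your proposal is correct and follows the paper's proof. The paper simply checks the hypotheses of Corollary \ref{cor_uniform_smoothness}: Lemma \ref{lemma_localization_preserving} gives the localization-preserving resolvent, Proposition \ref{prop_main_kps3} gives uniform localization, and the non-orthogonality condition \eqref{eq_branch_switch} is read off from \eqref{eq_uniformly_localized}, exactly as you do. Your extra points go slightly beyond what the paper spells out, namely the one-sided treatment at $\ep_*=0$ and the uniformity in $\ep_*$ of the constants of $R_\bn(x)$ (which is what lets you pass from Taylor expansions at each base point to genuine $C^\infty$ smoothness), and they are correctly resolved by the uniform monotonicity of $E$ and the uniform bounds in \eqref{eq_uniformly_localized}.
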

\begin{proof}
The assumptions of Corollary \ref{cor_uniform_smoothness} are satisfied due to Lemma \ref{lemma_localization_preserving} and Proposition \ref{prop_main_kps3}. In particular, the non-orthogonality condition follows from \eqref{eq_uniformly_localized}.
\end{proof}
In a similar fashion, we also obtain smoothness under $C^1$-perturbations of $f$. Let $g\colon \R\to \R$ be a $1$-periodic function, such that $\|g\|_{C^1[0,1]}\le 1$, and let $0<t_0<1$. 
Then, perhaps after a modification of $\ep_0$ depending on $t_0$, the operators $H_{f+tg}$ with $f$ replaced by $f+tg$ satisfy the assumptions of Proposition $\ref{prop_main_kps3}$, with all estimates uniform in $t\in [-t_0,t_0]$ $($in view of Remark $\ref{rem_monotonicity_constant}$$)$.
\begin{cor}
\label{cor_smoothness_g}
 In the above notation, the eigenvalues \eqref{eq_eigenvalues} and eigenvectors \eqref{eq_eigenvectors}, considered as functions of $t$, are infinitely smooth.
\end{cor}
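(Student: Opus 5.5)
The argument will follow the proof of Corollary \ref{cor_perturbation}: put the $t$-dependence into the framework of Corollary \ref{cor_uniform_smoothness}. Fix $x$ and $\bn$. For $s,t\in[-t_0,t_0]$ write
$$
H_{f+sg}(x)=H_{f+tg}(x)+(s-t)G(x),\qquad G(x)e_\bm:=g(x+\bm\cdot\omega)e_\bm .
$$
Here $G(x)$ is diagonal with entries bounded by $\|g\|_{C^0}\le 1$. By Proposition \ref{prop_localization_preserving_examples}(1), $G(x)$ is bounded, self-adjoint, and localization preserving. Near a fixed $t$ we therefore take $D:=H_{f+tg}(x)$ as the unperturbed operator and $A:=G(x)$ as the perturbation, with $s-t$ playing the role of the small parameter.

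The steps are as follows.

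\textbf{Step 1 (uniform localization).} By the remark preceding the statement, combined with Remark \ref{rem_monotonicity_constant}, Proposition \ref{prop_main_kps3} applies to $f+sg$ with constants uniform in $s\in[-t_0,t_0]$. The monotonicity constant is at least $1-t_0>0$, since $\|g'\|\le 1$. Hence the eigenvectors $\psi_\bn^{(s)}(x)$ are uniformly localized at $\bn$, and by covariance also uniformly localized at the origin with constants depending on $\bn$. The eigenvalues $E^{(s)}_\bn(x)$ are simple.

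\textbf{Step 2 (localization-preserving resolvent).} Lemma \ref{lemma_localization_preserving} applied to $f+tg$ shows that the projected resolvent $R^{(t)}_\bn(x)$ of $D=H_{f+tg}(x)$ is localization preserving. Its proof only uses monotonicity of $E^{(t)}$, the Diophantine condition, and uniform localization, and all of these hold uniformly in $t$. This verifies (A1)--(A3), and the localization-preserving hypotheses, of Theorem \ref{th_smoothness}.

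\textbf{Step 3 (non-orthogonality).} Condition \eqref{eq_branch_switch} is needed to apply Corollary \ref{cor_uniform_smoothness}. By \eqref{eq_uniformly_localized}, each $\psi_\bn^{(s)}(x)$ lies within distance $O(\ep^{1-\delta})$ of $e_\bn$ in $\ell^2$, uniformly in $s$. Consequently $|\<\psi_\bn^{(s)}(x),\psi_\bn^{(t)}(x)\>|\ge 1-C\ep^{1-\delta}\ge 1/2$ for all $s,t$ once $\ep_0$ is reduced. Corollary \ref{cor_uniform_smoothness} then gives $C^\infty$ smoothness of $E_\bn(x)$ in $t$, and a smooth normalization of $\psi_\bn(x)$. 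Since both the chosen normalization and the $\ell^2$-normalized branch have a non-vanishing $e_\bn$-component, the observation in that corollary's proof upgrades this to smoothness of the eigenvectors themselves (up to a phase, which is real in this setting).

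The main obstacle is a subtle point in Step 1, not the perturbation theory itself. Theorem \ref{th_smoothness} requires that the family $\psi(s)$ be a single eigenvector branch, that is, that $\psi_\bn^{(s)}(x)$ is the branch continuing from $\psi_\bn^{(t)}(x)$ and not some other eigenvector. Remark \ref{remark_arbitrary_eigenfunction} warns that a wrong branch makes the remainder blow up. The labelling by localization centers settles this. Because $\psi_\bn^{(s)}(x)$ is $O(\ep^{1-\delta})$-close to $e_\bn$ uniformly in $s$, and the spectrum is simple, the eigenvector of $H_{f+sg}(x)$ with maximal overlap with $e_\bn$ is unique. It therefore coincides with the branch constructed in Proposition \ref{prop_main_kps3}, independently of the measurable choices made there. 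Once this identification is in place, the uniform bounds on the remainder in Theorem \ref{th_smoothness} hold because the remainder involves finitely many applications of $R^{(t)}_\bn$ and $G$ to uniformly localized vectors, and smoothness follows.
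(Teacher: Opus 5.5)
Your proposal is correct and follows the route the paper intends. The paper gives no separate proof, saying only that the result is obtained ``in a similar fashion'' to Corollary \ref{cor_perturbation}: apply Corollary \ref{cor_uniform_smoothness} using Lemma \ref{lemma_localization_preserving} and Proposition \ref{prop_main_kps3}, made uniform in $t$ via Remark \ref{rem_monotonicity_constant}. That is what you spell out, with $D=H_{f+tg}(x)$ and $A$ the diagonal multiplication by $g(x+\bm\cdot\omega)$. Your branch-identification ``obstacle'' is harmless but not really needed, since uniform localization plus the non-orthogonality bound already give the hypotheses of Theorem \ref{th_smoothness}.
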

\begin{rem}
We note that the actual calculation of the derivatives at $\ep\neq 0$ is complicated by the fact that one needs to know the original eigenvalues and eigenvectors at the point where the derivative is considered. Unlike the earlier results \cite{KPS} for Maryland-type potentials, Proposition \ref{prop_main_kps3} only establishes existence of these eigenvectors with uniform localization bounds. However, one can calculate any derivative at $\varepsilon=0$, in which case the result will be given, not surprisingly, by exact same formulas as in \cite{KPS}. The main difference with \cite{KPS} is that in the latter one has analyticity of eigenvalues and eigenvectors, which is equivalent to an $O(\ep^{N+1})$ estimate for the remainder. In the present case, the estimate on the remainder is only $C(N)\ep^{N+1}$, although the regularity assumptions on $f$ are much less restrictive. We do not know whether the branches constructed in Proposition \ref{prop_main_kps3} are analytic in $\ep$.
\end{rem}
\subsection{Gap labeling and zeros of eigenfunctions}
We say that $f$ is {\it sawtooth-type} defined already if it is continuous on $[0,1)$ and $-\infty<f(0)<f(1-0)<+\infty$. In this case, one can define an operator $H_{f_t}(x)$, with $f$ replaced by $f_t$:
\bee
\label{eq_t_def}
f_t(x)=\begin{cases}
f(x),&x\in (0,1)\\
t,&x=0,
\end{cases};\quad t\in (\R\cup\infty)\setminus(f(0),f(1-0)).
\ene
extended periodically into $\mathbb R$. The choice of the range of values of $t$ is made in order for $f_t$ to preserve the monotonicity property \eqref{eq_monotonicity} (perhaps after changing the continuity interval from $[0,1)$ to $(0,1]$). We note that the case $t=\infty$ is allowed, and refer the reader to \cite[Section 3]{KPS2} for a detailed discussion of this case. 

Clearly, the presence of the parameter $t$ only matters for the operator $H_{f_t}(x)$ with $x\in \Z+\Z^d\cdot\omega$. Since the change $x\mapsto x+\bn\cdot\omega$ corresponds to a translation by a vector $\bn\in \Z^d$, we will focus our attention on the zero phase operator $H_{f_t}:=H_{f_t}(0)$. In this case, as $t$ changes, the operator $H_{f_t}$ undergoes a rank one perturbation:
\bee
\label{eq_rank_one}
H_{f_t}-H_0=(t-f(0))\<e_\bze,\cdot\>e_\bze,
\ene
which typically leads to eigenvalues appearing in spectral gaps. We refer the reader to the general discussion in \cite[Sections 4.1 and 4.2]{KPS2}. In a somewhat more general setting, as $t$ runs over the range defined in \eqref{eq_t_def} in a ``counter-clockwise'' direction, an eigenvalue will at some point appear at the lower edge of each gap and later disappear at the upper edge. However, the general theory does not predict the exact moment of appearance and disappearance.

In \cite[Theorem 1.4]{KPS3} it was shown that, under the assumptions of Proposition \ref{prop_main_kps3}, the spectrum of the operator $H$ has infinitely many gaps. The following result can be considered as a refinement that shows that there is a natural correspondence between labels of the gaps and labels of the eigenfunctions, and whether the gap is open is controlled by the value of the corresponding eigenfunction at the origin. We also show that, as $t$ runs over the range \eqref{eq_t_def}, the eigenvalues in all gaps appear at the earliest possible moment and disappear at the latest possible moment.
\begin{thm}
\label{th_gap_labeling}
Under the assumptions of Proposition $\ref{prop_main_kps3}$, assume that $f$ is sawtooth-type. Then, the following are true.
\begin{enumerate}
	\item The function $x\mapsto E(x)$ is continuous for $x\in [0,1)\setminus\{\omega\cdot\Z^d\}$. As a consequence, the only allowed values of the IDS at spectral gaps are $\{\bn\cdot\omega\}$, with $\bn\in \Z^d$.
	\item The gap associated to $\bn\in \Z^d$ in the above sense is open if and only if $\psi_{\bn}(0;\bze)\neq 0$. In other words, if and only if the zero phase eigenfunction with label $\bn\in \Z^d$ does not vanish at the origin.
	\item For $t\in (\R\cup\infty)\setminus[f(0),f(1-0)]$, all eigenvalues of $H_{f_t}(0)$ are isolated, and there is exactly one eigenvalue in each gap.
	\item The above isolated eigenvalues are infinitely smooth in $t$ on $(-\infty,f(0)]$ and $[f(1-0),+\infty)$.
\end{enumerate}
\end{thm}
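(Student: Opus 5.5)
We will work throughout with the zero phase operator $H_{f_t}=H_{f_t}(0)$ and the rank one family \eqref{eq_rank_one}, combining Proposition \ref{prop_main_kps3} (applied to each $f_t$, which is again Lipschitz monotone after moving the continuity interval) with the smoothness results of Section 3.

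\emph{Step 1: continuity of $E$ and gap labels.} The first claim comes from a compactness and uniqueness argument. Take $x_k\to x$ with $x\notin\omega\cdot\Z^d+\Z$. The points $x+\bn\cdot\omega$ then avoid the discontinuity of $f$, so $H(x_k)\to H(x)$ strongly. By \eqref{eq_uniformly_localized}, the vectors $\psi(x_k)$ are uniformly localized, so a subsequence converges in $\ell^2$ to a normalized eigenvector of $H(x)$ that is localized near $\bze$. Simplicity of the spectrum and the localization of the basis \eqref{eq_eigenvectors} identify this limit with $\psi(x)$, and the limit eigenvalue with $E(x)$. Monotonicity of $E$ then means that the only jumps of $E$ occur at the points $\{-\bn\cdot\omega\}$. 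The IDS at energy $\lambda$ equals the Lebesgue measure of $\{x: E(x)<\lambda\}$, since the $\psi_\bn$ form a covariant basis. Hence a gap, which is the image of a jump of $E$, carries IDS value $\{\bn\cdot\omega\}$. The same argument gives the one-sided limits $E(x\pm0)$ at the jump points, which are eigenvalues of the operators $H_{f_t}$ with $t=f(0)$ and $t=f(1-0)$ up to translation. With this we can label the gap at $x=\{-\bn\cdot\omega\}$ (equivalently at phase $0$) by $\bn$.

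\emph{Step 2: the rank one picture.} At phase $0$, the branches for $t=f(0)$ and $t=f(1-0)$ coincide for all labels except through the site $\bze$. By covariance, the jump of $E$ at $\bn$ equals $E_\bn(0+)-E_\bn(0-)$, namely the difference of the eigenvalue with label $\bn$ of $H_{f_t}$ between $t=f(1-0)$ and $t=f(0)$. Apply Proposition \ref{prop_main_kps3} to $f_t$ for each $t$ in the admissible range, after a cyclic relabeling of the period interval, to obtain localized branches $E_\bn(t)$, $\psi_\bn(t)$ for $H_{f_t}$. Corollary \ref{cor_uniform_smoothness}, applied with $D=H_{f_{t_0}}$ and the rank one $A=\<e_\bze,\cdot\>e_\bze$ (localization preserving by Lemma \ref{lemma_localization_preserving}), gives smoothness in $t$, which is claim (4). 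The Hellmann--Feynman computation \eqref{eq_E_first_order} gives
\[
\frac{d}{dt}E_\bn(t)=|\psi_\bn(t;\bze)|^2/\|\psi_\bn(t)\|^2\ge 0 .
\]
As $t$ runs through $(\R\cup\infty)\setminus(f(0),f(1-0))$ from $f(1-0)$ up through $\infty$ and back to $f(0)$, each branch moves monotonically from one gap edge to the other. The gap $\bn$ is therefore nondegenerate iff this derivative is not identically zero along the path.

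\emph{Step 3: the equivalence in (2) and isolation in (3).} If $\psi_\bn(0;\bze)=0$, then $\psi_\bn$ is an eigenvector of every $H_{f_t}$, so $E_\bn(t)$ is constant and the gap is closed. Conversely, if $\psi_\bn(0;\bze)\neq0$, the derivative is positive at the endpoint, and I expect (this is the main obstacle) that one must show $\psi_\bn(t;\bze)\ne0$ along the whole path, or at least that the branch actually moves. I would use the Aronszajn--Donoghue theory for rank one perturbations: eigenvectors with $\psi(\bze)=0$ are common to all $t$. If $\psi_\bn(t_1;\bze)=0$ at some $t_1$, that vector is an eigenvector at $t=f(0)$ with a label, and uniqueness of localized labels forces it to equal $\psi_\bn$ at the endpoint, a contradiction. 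The derivative is therefore strictly positive and the branch traverses the gap, which must then be open. For (3), when $t$ lies strictly outside $[f(0),f(1-0)]$, the value $t$ is off the range of $f$ at $\bze$. Every branch $E_\bn(t)$ has then strictly left its endpoint edge, since by strict monotonicity it lies strictly inside the gap, or else the gap is closed and the branch is the unique eigenvalue there. The remaining spectrum $\overline{\{E(x)\}}$ is unchanged, because only the phase-$0$ fiber changes. Counting by labels (one branch per $\bn$, one gap per $\bn$) gives exactly one eigenvalue per gap, and these eigenvalues are isolated.
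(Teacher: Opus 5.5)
Your proof is correct. For (2)--(4) it runs on the same mechanism as the paper's: Hellmann--Feynman for the rank one family \eqref{eq_rank_one}, smoothness from Corollary \ref{cor_uniform_smoothness}, and the fact that an eigenvector vanishing at $\bze$ is an eigenvector of every $H_{f_t}$. The genuine difference is that you replace the paper's appeal to the gap filling theorem of \cite{KPS2} by a self-contained tracking of the $\bn$-labeled branch. The paper uses gap filling both for (1) and for ``$\psi_\bn(0;\bze)=0$ implies closed'', and then needs only first-order information at the two endpoints. You instead prove continuity of $E$ off the orbit of $0$ by compactness and uniqueness of the localized basis, compute the IDS from the covariant eigenbasis, and identify the values of the branch at $t=f(1-0)$ and $t=f(0)$ with the gap edges $E(y-0)$ and $E(y)$, where $y=\{-\bn\cdot\omega\}$.

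This buys you two things. First, an explicit proof of the continuity assertion in (1), which the paper gets only implicitly through gap filling. Second, an explanation of why the derivative is also positive at $t=f(1-0)$: the paper's ``the same applies'' tacitly needs the $\bn$-th eigenvector of the $t=f(1-0)$ operator to be nonzero at $\bze$, and that is exactly your uniqueness-of-labels argument.

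The price is that you need Proposition \ref{prop_main_kps3} for every $f_t$, including $t\ge f(1-0)$ after changing the continuity interval, and $t=\infty$. Also, ``moves monotonically from one gap edge to the other'' uses continuity of the branch through $t=\infty$. Corollary \ref{cor_uniform_smoothness} does not give this, since the family is not of the form $D+tA$ there. You can fix it in either of two ways:
\begin{itemize}
\item Add a compactness argument as in your Step 1, using that the $\bze$-component of the $\bn$-th eigenvector of $H_{f_t}$ is $O(1/|t|)$ for $\bn\ne\bze$.
\item Avoid $\infty$ altogether. Let $E^{(t)}$ denote the function produced by Proposition \ref{prop_main_kps3} for $f_t$. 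Then $E(y-0)\le E_\bn(t)\le E(y)$ follows directly from monotonicity of $E^{(t)}$ in the phase together with $E^{(t)}=E$ off the orbit of $0$.
\end{itemize}

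The ``main obstacle'' you flag is not actually needed for (2). A positive derivative at $t=f(0)$, together with this confinement, already forces the gap open; everywhere-positivity is only a convenient route to (3). In (3), ``isolated'' can only refer to open gaps, since for a closed gap the eigenvalue stays embedded in the spectrum. Finally, a minor slip: Lemma \ref{lemma_localization_preserving} is about $R$, not $A$.
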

Note that in the fourth property it is already well known that the eigenvalues will be real analytic in $t$ once they separate from the gap edges. The new part is smoothness as they approach the gap edges.
\begin{proof}
From the gap filling theorem (see \cite[Theorem 4.2]{KPS2}), it follows that the set of isolated eigenvalues of $H_{f_t}(0)$, as runs over the range \eqref{eq_t_def}, will fill the whole real line. On the other hand, each of these isolated eigenvalues must be between two consecutive values of $E(\{\bn\cdot\omega\})$, which are the eigenvalues of the zero phase operator $H_{f_0}(0)=H(0)$. As a consequence, the value of the IDS at each gap must be equal to that at the energy $E(\{\bn\cdot\omega\})$, which implies the first claim.

If $\<\psi_{\bn}(0),e_\bze\>=0$, then the same $\psi_\bn(0)$ will be an eigenfunction of $H_{f_t}$ for all values of $t$. As a consequence, the corresponding eigenvalue will not be moving with $t$, which would contradict the gap filling results \cite[Theorem 4.2]{KPS2} if that gap was open. Therefore, it must be closed.

Now, let us note that the family of rank one perturbations \eqref{eq_rank_one} satisfies the assumptions of Theorem \ref{th_smoothness} and Corollary \ref{cor_uniform_smoothness}. As a consequence, if $\<\psi_{\bn}(0),e_\bze\>\neq 0$, then the first order calculation \eqref{eq_E_first_order} implies that the corresponding eigenvalue has positive derivative, and therefore must start moving as soon as $t$ departs from its initial value $f(0)$ in the negative direction. The same applies to $t$ moving upwards from $f(1-0)$. As a consequence, in both cases the eigenvalue appears and disappears immediately at the same time as $t$ leaves its boundary value.

We also note that, from general perturbation theory \cite{Kato}, the corresponding isolated eigenvalues are analytic in $t\in (\R\cup\infty)\setminus[f(0),f(1-0)]$ (as well as at $t=\infty$ if defined appropriately). In addition to that, we now know that they are infinitely smooth as $t$ approaches $f(0)$ or $f(1-0)$ from the correct side.
\end{proof}
As a consequence, we can calculate exactly the measure of the spectrum of a sawtooth-type operator \eqref{eq_h_def} in the same regime.
\begin{thm}
\label{th_measure_of_spectrum}
Under the assumptions of Theorem $\ref{th_gap_labeling}$, the measure of the spectrum of the operator \eqref{eq_h_def} is equal to $f(1-0)-f(0)$.
\end{thm}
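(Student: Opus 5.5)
The approach is to compute the total length of all gaps and subtract it from the length of the convex hull of the spectrum. The IDS $k\mapsto N(E)$ and the function $E(\cdot)$ from Proposition \ref{prop_main_kps3} are essentially inverse to each other. Since the eigenfunctions $\psi_\bn(x)$ are uniformly localized at $\bn$ and labeled by $E_\bn(x)=E(x-\bn\cdot\omega)$, the IDS at energy $\lambda$ equals the Lebesgue measure of $\{x\in[0,1)\colon E(x)\le\lambda\}$. This follows from the standard ergodic computation $N(\lambda)=\int_0^1\<\one_{(-\infty,\lambda]}(H(x))e_\bze,e_\bze\>\,dx$, combined with covariance \eqref{eq_covariance} to rewrite it as $\int_0^1\sum_\bn|\psi(x-\bn\cdot\omega;-\bn)|^2\one_{E(x-\bn\cdot\omega)\le\lambda}\,dx$. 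Then $x\mapsto x-\bn\cdot\omega$ and the normalization $\|\psi(x)\|=1$ give the claim. Since $E$ is strictly monotone on $[0,1)$, the spectrum, which is the support of $dN$, is the closure of the range $E([0,1))$.

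The next step uses Theorem \ref{th_gap_labeling}(1). $E$ is increasing and continuous except possibly at the countable set $\{\bn\cdot\omega\}\cap[0,1)$. So the complement of $\overline{E([0,1))}$ inside $[E(0),E(1-0)]$ is the disjoint union of open jump intervals $(E(x_\bn-0),E(x_\bn+0))$ with $x_\bn=\{\bn\cdot\omega\}$. The measure of the spectrum is therefore
\[
E(1-0)-E(0)-\sum_{\bn}\bigl(E(x_\bn+0)-E(x_\bn-0)\bigr),
\]
which is the total continuous increase of $E$ (monotone functions split into a jump part and a continuous singular-plus-absolutely-continuous part; no Lebesgue measure is lost because the range of the continuous part has measure equal to its total increase). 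So the problem reduces to comparing $E$ with $f$ via their jumps and endpoints.

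The key identity I would aim for is that the total variation of the continuous part of $E$ equals $f(1-0)-f(0)$. I would use the rank one family \eqref{eq_rank_one} at zero phase. Take $t$ running from $f(0)$ downward through $-\infty=\infty$ and back to $f(1-0)$. By Theorem \ref{th_gap_labeling}(3) each gap contains exactly one isolated eigenvalue, which sweeps the entire gap, and the gap filling theorem \cite[Theorem 4.2]{KPS2} makes these sweeps cover $\R\setminus[E(0),E(1-0)]$ together with all interior gaps. The cleaner route is a trace/spectral shift argument. For the rank one perturbation, the spectral shift function $\xi(\lambda)$ of the pair $(H_{f_{t}},H_{f_0})$ takes values in $[0,1]$. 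Its integral equals $t-f(0)$, by the trace formula $\mathrm{tr}(H_{f_t}-H_{f_0})=\int\xi$. Pushing $t$ to $\pm\infty$ and using that the eigenvalue in each gap traverses it exactly once, I would obtain
\[
\sum_{\bn}|\text{gap}_\bn| = E(1-0)-E(0)-(f(1-0)-f(0)),
\]
up to an identification of the outer half-lines. An alternative, likely simpler route is to compare $E$ with $f$ via $\varepsilon\to0$: the continuous increase of $E$ equals $\int_0^1 dN^{-1}$, and the density-of-states measure has zero Lebesgue-singular ... but that route is less robust.

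The main obstacle is the accounting in the spectral shift step. One must justify that $\xi=1$ exactly on the traversed part of each gap and vanishes (a.e.) on the spectrum itself, so that $\int\xi$ over one full traversal of $t$ reproduces exactly the sum of gap lengths plus the outer pieces. On the spectrum, since $H_{f_t}$ has pure point spectrum in the interior by the localization of Proposition \ref{prop_main_kps3} applied to the monotone $f_t$, I would argue via the Birman--Krein/Simon--Wolff description that the a.e.\ value of $\xi$ on $\sigma$ is determined by $\{$IDS shift$\}$, which is $0$ by ergodicity. Controlling the limits $t\to\pm\infty$ uses the $t=\infty$ discussion in \cite[Section 3]{KPS2}.
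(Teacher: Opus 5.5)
Your first two steps are fine: the IDS equals $|\{x\colon E(x)\le\lambda\}|$, $\sigma=\overline{E([0,1))}$, and hence $|\sigma|=E(1-0)-E(0)-\sum_\bn(\text{gap lengths})$. Encoding the remaining identity through the spectral shift function of the rank one family is also legitimate. But the whole content of the theorem sits in the step you flag as the main obstacle, and the justification you sketch for it does not work.

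Pure point spectrum only gives $|\xi|\in\{0,1\}$ a.e., since the boundary values of the Borel transform of the spectral measure of $e_\bze$ are real a.e. It says nothing about where $\xi\neq0$. The IDS is unchanged by any rank one perturbation of any ergodic model, so ``IDS shift $=0$ by ergodicity'' applies verbatim to, e.g., a localized Maryland-type operator. Its spectrum is all of $\R$, so there $\int\xi=|t-f(0)|\neq0$ forces $\xi$ to live on $\sigma$. In fact, given your bookkeeping inside the gaps and on the outer half-lines, ``$\xi=0$ a.e.\ on $\sigma$'' is equivalent to the theorem. It therefore cannot follow from soft facts shared by all ergodic localized models.

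What is missing is model-specific information on how the eigenvalues move with $t$, and the paper supplies it directly. The first-order formula \eqref{eq_E_first_order} gives $E_\bn'(0;t)=|\psi_\bn(0;t;\bze)|^2$ for each branch; this is justified by the smoothness results of Section 3, as in the proof of Theorem~\ref{th_gap_labeling}. Completeness of the eigenbasis then gives the trace identity \eqref{eq_trace_formula}, $\sum_\bn E_\bn'(0;t)=1$. Each moving eigenvalue sweeps exactly its own gap. So the paper integrates over $[-M,M]\setminus(f(0),f(1-0))$, using monotone convergence for $\bn\neq\bze$. For the unbounded branch it uses $E_\bze(0;t)=t+O(1/|t|)$, again from \eqref{eq_E_first_order} since $\Delta$ has zero diagonal. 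This yields your identity without any spectral shift function.

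If you want to keep $\xi$, the correct replacement for the ergodicity heuristic is $\xi(\cdot\,;s)\,d\lambda=\int_0^s\mu_{s'}\,ds'$, where $s=t-f(0)$ and $\mu_{s'}$ is the spectral measure of $e_\bze$ for $H(0)+s'\langle e_\bze,\cdot\rangle e_\bze$. This follows from $\frac{d}{ds}\mathrm{tr}(\varphi(H_{f_t})-\varphi(H(0)))=\langle\varphi'(H_{f_t})e_\bze,e_\bze\rangle$. For $t\notin[f(0),f(1-0)]$, this measure is carried by eigenvalues strictly inside the open gaps. That uses Theorem~\ref{th_gap_labeling}(3) and the fact that eigenvectors with closed-gap labels vanish at $\bze$. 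Hence $\xi$ does not charge $\sigma$, and your route becomes an integrated version of the paper's trace formula.
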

\begin{proof}
Define $H_t$ as in \eqref{eq_t_def}, \eqref{eq_rank_one}, and consider each eigenvalue \eqref{eq_eigenvalues} of $H_{f_t}$ as a function of $t$, denoted by $E_{\bn}(0;t)=E(\bn\cdot\omega;t)$.
Here, $0$ reflects the zero phase. Clearly, each $E_{\bn}(0;t)$ is a non-decreasing function of $t$, strictly increasing if and only the gap with label $\bn$ is open, and constant if and only if the corresponding gap is closed. By comparing the traces of the operators, we have
\bee
\label{eq_trace_formula}
\sum_{\bn\in\Z^d}E_\bn'(0;t)=1,\quad \text{for}\quad t\in \R\setminus(f(0),f(1-0))=:\R\setminus J,
\ene
where prime denotes the derivative in $t$. We also note that, from Theorem \ref{th_gap_labeling},
$$
\sigma(H(0))=\R\setminus\cup_{\bn\in \Z^d}\{E_\bn(0;t)\colon t\in (\R\cup\infty)\setminus[f(0),f(1-0)],E_\bn(0;t)\not\equiv \mathrm{const} \}.
$$
Since the ranges of different $E_\bn(0;t)$ do not overlap, it is tempting to claim that the total length of these ranges must be equal to the length of the range of $t$ and conclude the proof by comparing the Lebesgue measures of the complements of each. Of course, one has to be somewhat careful since the above argument involves subtracting infinite quantities.

In order to be more precise, note that for $\bn\in \Z^d\setminus\{\bze\}$, each $E_\bn(0;\cdot)$ is bounded for the range of $t$ under consideration. The only unbounded eigenvalue is $E_\bze$, which corresponds to the eigenvalue contained in the gap of infinite length. From Theorem \ref{th_lattice_diagram_expansion}, we have
$$
E_\bze(0;t)-t\le \frac{C}{1+|t|}.
$$
Indeed, note that in the first order expansion \eqref{eq_E_first_order}, the first term vanished due to absence of diagonal elements in the Laplacian, and each term in the remainder has a copy of the operator $Q$, with the corresponding vertex factor containing $(t-O(1))^{-1}$. As a consequence, for a large $M$ one can integrate \eqref{eq_trace_formula} in $t$ restricted to $[-M,M]$ and obtain
\begin{multline}
\label{eq_trace_formula_integrated}
\sum_{\bn\in \Z^d\setminus \bze}\int_{[-M,M]\setminus J}E'_\bn(0;t)\,dt=\int_{[-M,M]\setminus J}\l((1-E_\bze'(0;t)\r)\,dt=\\=E_\bze(f(1-0))-E_\bze(f(0))-(f(1-0)-f(0))+O(M^{-1}).
\end{multline}
We note that $E_\bze(f(1-0))$ and $E_\bze(f(0))$ are, respectively, the upper and lower edges of $\sigma(H(0))$, and the left hand side of \eqref{eq_trace_formula_integrated} converges to the sum of the length of all finite spectral gaps as $M\to +\infty$, which completes the proof.
\end{proof}

\subsection{Opening and closing some of the gaps}
\label{subsection_principal_gaps}
Clearly, the gap with label $\bze$ is the gap containing $\infty$, and it is always open for sawtooth-type potentials. A gap label $\bn\in\Z^d$ will be called {\it principal} if it has exactly one non-zero component. In other words, a principal label associated to the standard basis vector $\bb_j$ of $\Z^d$ is 
$$
\bn=(0,\ldots,0,n,0,\ldots,0)=n\bb_j,\quad n\in \Z\setminus \{0\},\quad j=1,\ldots,d.
$$
We can now combine Corollary \ref{cor_perturbation} with Theorem \ref{th_gap_labeling} and arrive to the following conclusion.
\begin{cor}
\label{cor_principal_gaps}
Under the assumptions of Theorem $\ref{th_gap_labeling}$, fix a principal label of the form $\bn=n\bb_{j}$ for $n\in \Z\setminus \bze$. Then, there exists $\varepsilon_0=\varepsilon_0(d,C_{\dio},\tau_\dio)$ such that the gap associated with the above label is open for $0<\ep<\ep_0$.
\end{cor}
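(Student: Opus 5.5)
By Theorem \ref{th_gap_labeling}, Claim 2, the gap with label $\bn=n\bb_j$ is open if and only if $\psi_\bn(0;\bze)\neq 0$. By \eqref{eq_eigenvectors}, $\psi_\bn(0;\bze)=\psi(-\bn\cdot\omega;-\bn)$, and this is a matrix element of an eigenvector branch. The plan is therefore to show that $\psi_\bn(0;\bze)$, viewed as a function of $\ep$, has a nonzero Taylor coefficient of order $|n|$ at $\ep=0$. Once that is established, $\psi_\bn(0;\bze)=c\,\ep^{|n|}+O(\ep^{|n|+1})$ with $c\neq 0$. This is nonzero for $0<\ep<\ep_0$, where $\ep_0$ depends on $n$ (as the introduction indicates), together with $d,C_\dio,\tau_\dio$.

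First I fix the phase $x=0$ and the label $\bn$, and consider the family $H(\ep)=D+\ep\Delta$ with $D$ the diagonal operator $f(\bm\cdot\omega)$. By Corollary \ref{cor_perturbation}, which rests on Lemma \ref{lemma_localization_preserving}, Proposition \ref{prop_main_kps3} and Corollary \ref{cor_uniform_smoothness}, the branch $\psi_\bn(0)$, normalized by $\<\psi,e_\bn\>=1$, is $C^\infty$ in $\ep$ on $[0,\ep_0)$; at the endpoint I use Remark \ref{rem_one_sided}. Its Taylor coefficients at $\ep=0$ are the Rayleigh--Schr\"odinger terms of Theorem \ref{th_lattice_diagram_expansion} with $\bm=\bn$. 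The remainder of order $N$ is $O(\ep^{N})$ uniformly, by the localization bounds. Changing from this normalization to the unit-norm one only multiplies by a factor close to $1$, so it does not affect whether the value vanishes. The unperturbed eigenvalues $d_\bk=f(\bk\cdot\omega)$ are distinct by rational independence and monotonicity, so \eqref{eq_non_resonant} holds. Since $A=\Delta$ has finite range, the expansion is legitimate.

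Next I compute the coefficient of $\ep^{k}$ in $\psi(\bze)$ for $k\le |n|$. An eigenvector configuration contributes to the entry at $\bze$ only if its base path goes from $\bze$ to $\bn$. Since $\Delta$ is nearest-neighbour with no diagonal, every path from $\bze$ to $n\bb_j$ has length at least $|n|$. Attached loops have length at least $2$. Hence all coefficients with $k<|n|$ vanish. At order exactly $|n|$, the only configuration is the straight path along the $j$-th axis, with no attachments; there is exactly one such path. Its contribution is
\[
\prod_{i=0}^{|n|-1}\frac{1}{d_\bn-d_{i\,\mathrm{sgn}(n)\bb_j}},
\]
up to sign and the edge factors $1$. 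The vertices are distinct from $\bn$, so each factor is finite and nonzero, and the product is therefore nonzero.

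The main point to check carefully is the uniqueness of the minimal-length configuration. This is exactly where principality enters: for a non-principal label there are several lattice geodesics, and their contributions could cancel. I would also verify that the remainder estimate from Theorem \ref{th_smoothness} is genuinely $o(\ep^{|n|})$ uniformly as $\ep\to0^+$, even though the branch is only defined for $\ep>0$. This follows from the uniform localization constants in Proposition \ref{prop_main_kps3} being uniform in $\ep\in(0,\ep_0)$.
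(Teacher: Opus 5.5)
Your proposal is correct and takes essentially the same route as the paper. You reduce, via Claim~2 of Theorem~\ref{th_gap_labeling}, to showing $\psi_\bn(0;\bze)\neq 0$. You then combine the expansion of Theorem~\ref{th_lattice_diagram_expansion} with the uniform remainder control from Corollary~\ref{cor_perturbation} to get $\psi_\bn(0;\bze)=\ep^{|n|}\cont(\cP)+O(\ep^{|n|+1})$, where $\cP$ is the unique straight lattice path along the $j$-th axis and $\cont(\cP)\neq 0$; as in the paper, you correctly note that the resulting $\ep_0$ must depend on $n$.
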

\begin{proof}
From Theorem \ref{th_gap_labeling}, it is sufficient to show that one can prevent $\psi_\bn(0)$ from vanishing at the origin. Since there is exactly one lattice path $\cP$ from $\bn$ to $\bze$, one can apply Theorem \ref{th_lattice_diagram_expansion} in view of Corollary \ref{cor_perturbation} for $N=n+1$, which will imply
$$
|\psi_\bn(0)-\ep^n\cont(\cP)|\le C(n)\ep^{n+1}.
$$
From the construction, $\cont(\cP)\neq 0$, which implies $\psi_\bn(0)\neq 0$ for small $\ep$. Note, however, that the smallness of $\ep$ depends on $n$, and therefore this argument can only be used to open finitely many principal gaps.
\end{proof}
We note that the conclusion of Corollary \ref{cor_principal_gaps} is the consequence of the fact that the leading term in the asymptotic expansions associated to gaps with principal labels has only one lattice path involved. We expect that this extends to some more general, not necessarily monotone models. 

Let us now call labels of the form $\pm\bb_i\pm \bb_j$, where $i\neq j$, {\it semi-principal labels}. In other words, $\bn\in \Z^d$ is a semi-principal label if and only if $\bn$ is not principal and $|\bn|_1=2$. In this case, there are precisely two lattice paths that contribute to the corresponding eigenvector component. The following corollary, which is even more specific to monotone $f$, demonstrates that semi-principal labels share the same non-closing property.
\begin{cor}
\label{cor_small_labels}
The conclusion of Corollary $\ref{cor_principal_gaps}$ also holds for gaps with semi-principal labels.
\end{cor}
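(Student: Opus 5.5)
Let $\bn=\sigma_i\bb_i+\sigma_j\bb_j$ with $i\neq j$ and $\sigma_i,\sigma_j\in\{\pm1\}$. As in the proof of Corollary \ref{cor_principal_gaps}, Theorem \ref{th_gap_labeling} reduces the claim to showing $\psi_\bn(0;\bze)\neq 0$ for all sufficiently small $\ep$. By Corollary \ref{cor_perturbation}, the branch $\psi_\bn(0)$ is smooth in $\ep$, and its Taylor coefficients at $\ep=0$ are given by Theorem \ref{th_lattice_diagram_expansion} with $D$ the diagonal potential $d_\bk=f(\bk\cdot\omega)$, $A=\Delta$ (no diagonal entries), and unperturbed vector $e_\bn$. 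I would take $N=3$, so that
$$
\Bigl|\psi_\bn(0;\bze)-\ep^2\sum_{\cP}\cont(\cP)\Bigr|\le C\ep^3 .
$$
Here the sum runs over eigenvector configurations from $\bn$ to $\bze$ of length $2$. The lower-order terms vanish because any lattice path from $\bn$ to $\bze$ has at least $|\bn|_1=2$ steps. Attachments raise the length by at least $2$, since a loop needs at least two steps when $A$ has no diagonal. So the only order-two configurations are the two bare paths $\bn\to\sigma_j\bb_j\to\bze$ and $\bn\to\sigma_i\bb_i\to\bze$.

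Next I would compute the contribution. All hopping entries equal $1$. The path through the intermediate point $\bk$ has vertex factors $(d_\bn-d_\bn)$-free form
$$
\cont=(d_\bn-d_\bk)^{-1}\,(d_\bn-d_\bze)^{-1}\cdot(\text{sign conventions}),
$$
with one factor from the starting point $\bn$ and one from $\bk$. Since the starting point $\bn$ is common to both paths, the $\ep^2$ coefficient equals
$$
\frac{1}{d_\bn-d_\bze}\Bigl(\frac{1}{d_\bn-d_{\sigma_i\bb_i}}+\frac{1}{d_\bn-d_{\sigma_j\bb_j}}\Bigr).
$$
Whatever the exact normalization of vertex factors in \eqref{eq_p_def}, the structure is a common factor times a sum of two reciprocals. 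It vanishes iff $2d_\bn=d_{\sigma_i\bb_i}+d_{\sigma_j\bb_j}$, in the form appropriate to the actual formula. So the task reduces to ruling out this exact cancellation.

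This is where monotonicity enters, and I expect it to be the main step. Write $a=\sigma_i\omega_i$ and $b=\sigma_j\omega_j$ modulo $1$, so that the points are $\bze$, $\bb$-points at phases $\{a\}$ and $\{b\}$, and $\bn$ at phase $\{a+b\}$. Since $f$ is strictly increasing on $[0,1)$, each sign of $d_\bn-d_{\bk}$ is determined by the cyclic order of the phases $\{a\},\{b\},\{a+b\}$ in $[0,1)$. The key observation I would try to prove is that $\{a+b\}$ is either larger than both $\{a\}$ and $\{b\}$ (no wraparound, $\{a+b\}=\{a\}+\{b\}$) or smaller than both (wraparound, $\{a+b\}=\{a\}+\{b\}-1$). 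Indeed, in the first case $\{a+b\}\ge\{a\},\{b\}$, and in the second $\{a+b\}<\{a\},\{b\}$, with strict inequalities by rational independence. Hence $d_\bn-d_{\sigma_i\bb_i}$ and $d_\bn-d_{\sigma_j\bb_j}$ have the same sign. The sum of reciprocals is therefore nonzero, as is the common factor $(d_\bn-d_\bze)^{-1}$, which is finite and nonzero by the Diophantine condition. So the leading coefficient is a nonzero constant $c(\bn,f,\omega)$, and $\psi_\bn(0;\bze)=c\ep^2+O(\ep^3)\neq0$ for $\ep<\ep_0$.

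The remaining checks are routine. I would verify that $d_\bze=f(0)$ causes no issue, since phase $0$ is the minimum and the argument only uses differences at $\bn$. I would also check that the $O(\ep^3)$ remainder constant is uniform, which follows from Theorem \ref{th_smoothness}. As with principal labels, this handles only finitely many semi-principal labels at a time, and in fact there are only finitely many ($2d(d-1)$), so a single $\ep_0$ works.
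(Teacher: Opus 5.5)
Your proposal is correct and is essentially the paper's proof. The $\ep^2$ coefficient you obtain is exactly \eqref{eq_second_gap}, and your observation that $\{a+b\}$ lies strictly above both $\{a\}$ and $\{b\}$ or strictly below both is the monotonicity check the paper leaves to the reader for the sign of $2f(\omega_1+\omega_2)-f(\omega_1)-f(\omega_2)$; you handle general signs $\sigma_i,\sigma_j$ directly instead of reducing to $\bn=(1,1)$, and your final formula is right, though the common factor $(d_\bn-d_\bze)^{-1}$ is the vertex factor at $\bze$ (there is none at the unperturbed site $\bn$), not at $\bn$.
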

\begin{proof}
Without loss of generality, one can assume $d=2$, $\bn=(1,1)$, and $0<\omega_1<\omega_2<1$. In this case, we have the expansion
\begin{multline}
\label{eq_second_gap}
\psi_{(1,1)}(0;\bze)=\ep^2\frac{1}{f(\omega_1+\omega_2)-f(0)}\l(\frac{1}{f(\omega_1+\omega_2)-f(\omega_1)}+\frac{1}{f(\omega_1+\omega_2)-f(\omega_2)}\r)+O(\ep^3)\\
=\ep^2 \frac{2 f(\omega_1+\omega_2)-f(\omega_1)-f(\omega_2)}{(f(\omega_1+\omega_2)-f(0))(f(\omega_1+\omega_2)-f(\omega_1))(f(\omega_1+\omega_2)-f(\omega_2))}+O(\ep^3).
\end{multline}
The reader can easily check that, due to monotonicity of $f$, the numerator must be strictly negative if $\omega_1+\omega_2>1$ and strictly positive if $0<\omega_1+\omega_2<1$.
\end{proof}
All remaining labels (that is, neither principal nor semi-principal) will be called {\it non-principal}. In the following two ``minimal'' examples, we illustrate the fact that the gaps with non-principal labels may close under a careful choice of Diophantine frequencies and a small perturbation of $f(x)=\{x\}$, for arbitrarily small $\ep>0$.
\begin{example}
\label{example_label12}
Let us consider the expansion for $\psi_{(1,2)}(0;\bze)$. There are three shortest paths connecting the lattice point $(1,2)$ to the origin. For the associated expansion, we obtain
\begin{multline}
\label{eq_gap12}
\psi_{(1,2)}(0;\bze)=\frac{\ep^3}{(f(\omega_1+2\omega_2)-f(0))}\l\{\frac{1}{f(\omega_1+2\omega_2)-f(\omega_1+\omega_2)}\frac{1}{f(\omega_1+2\omega_2)-f(\omega_1)}\r.\\+\frac{1}{f(\omega_1+2\omega_2)-f(\omega_1+\omega_2)}\frac{1}{f(\omega_1+2\omega_2)-f(\omega_2)}\\ \l.+\frac{1}{f(\omega_1+2\omega_2)-f(2\omega_2)}\frac{1}{f(\omega_1+2\omega_2)-f(\omega_2)}\r\}+O(\ep^4).
\end{multline}
In view if Corollary \ref{cor_smoothness_g} and Remark \ref{rem_monotonicity_constant}, the remainder denoted by $O(\ep^4)$ is uniformly bounded, assuming that the Diophantine class of $\omega$ and the monotonicity constant of $f$ are fixed. Moreover, within the above constraints, the remainder is continuous (and smooth) in the parameter $t$ considered in \ref{cor_smoothness_g}.

We will now make some special choice of frequencies. Let $\omega_1+\omega_2=1+\delta$, where $\delta$ will be picked later. For brevity, let
$$
F=F(\delta,f):=f(\omega_1+2\omega_2)-f(0)=f(\omega_2+\delta)-f(0).
$$
Then 
\begin{multline*}
\ep^{-3}F\psi_{(1,2)}(0;\bze)=\frac{1}{f(\omega_2+\delta)-f(\omega_2)}\l\{\frac{1}{f(\omega_2+\delta)-f(\delta)}+\frac{1}{f(\omega_2+\delta)-f(2\omega_2)}\r\}\\
+\frac{1}{(f(\omega_2+\delta)-f(\delta))(f(\omega_2+\delta)-f(\omega_1))}+\ep^{-3}FO(\ep^4)
\\=\frac{1}{f(\omega_2+\delta)-f(\omega_2)}\cdot \frac{2f(\omega_2+\delta)-f(2\omega_2)-f(\delta)}{(f(\omega_2+\delta)-f(\delta))(f(\omega_2+\delta)-f(2\omega_2))}\\
+\frac{1}{(f(\omega_2+\delta)-f(\delta))(f(\omega_2+\delta)-f(\omega_1))}+\ep^{-3}FO(\ep^4).
\end{multline*}
Assume that $0<\omega_2<2\omega_2+\delta<1$, and choose
\bee
\label{eq_choose_f}
f(x):=\{x\}+\gamma g(x),
\ene where $g(2\omega_2)=1$ and $g$ is a bump function supported on $\l[2\omega_2-\frac{1}{100},2\omega_2+\frac{1}{100}\r]$. We note that, for $\delta$ small enough, this guarantees that the values of $f$ at the remaining sampling points will not change. Then the latter expression simplifies to
\begin{multline}
\label{eq_g_estimate}
\ep^{-3}F\psi_{(1,2)}(0;\bze)=\\=-\delta^{-1}\frac{\delta-\gamma}{\omega_2(\omega_2-\delta+\gamma)}-\frac{1}{\omega_2(\omega_1-\omega_2-\delta)}+\ep^{-3}FO(\ep^4)=:G(\delta,\gamma)+FO(\ep),
\end{multline}
where we note that $F$, in fact, does not depend on $\gamma$.

In the middle expression of \eqref{eq_g_estimate}, for a fixed small $\delta$, all denominators are away from zero, and all three terms are continuous in $\gamma\in [-\delta^{1/2},\delta^{1/2}]$. The range of values of the first term contains a large interval $[-C_1\delta^{-1/2},C_1\delta^{-1/2}]$. Assuming that $\omega_1\neq \omega_2$, the second term is can be bounded in absolute value by a constant independent of $\delta$. The correction term can be estimated by $\ep C_2(\omega,\delta)$. Since $\ep$ can be chosen last, this term can be made arbitrarily small.

Since all three terms are continuous in $\gamma$ and the estimates are uniform in $\gamma \lesssim O(\delta^{1/2})$, there exists a choice of $\gamma$ that would assign the right hand side of \eqref{eq_g_estimate} any value in the range say, $\l[-\frac12 C_1\delta^{-1/2},\frac12 C_1\delta^{-1/2}\r]$, which includes the origin.

Since the order of choice of the parameters plays a key role in this argument, we list the precise order here for the convenience of the reader.
\begin{enumerate}
    \item Choose a Diophantine $\omega_2$ which is, say, close to $1/4$.
    \item Choose $\delta\in \l(0,\frac{1}{1000}\r)$ sufficiently small such that:\begin{itemize}
        \item The frequency vector $(\omega_1,\omega_2)=(1+\delta-\omega_2,\omega_2)$ is Diophantine.
        \item The range of values of $F(\delta,\gamma)$, considered as a function of $\gamma\in [-\delta^{1/2},\delta^{1/2}]$, contains an interval $[-1,1]$.
        \item The function 
        \eqref{eq_g_estimate} satisfies the lower Lipschitz bound \eqref{eq_rem43} with $c=1/2$, uniformly in $\gamma$ in the above range.
    \end{itemize}
    We note that the above choice of $\delta$ already implies the fact that the addition of $g$ does not change the values of $f$ at $\omega_1$, $\omega_2$, $\omega_1+\omega_2$, and $\omega_1+2\omega_2$.
    \item Choose $\ep_0>0$ small enough such that the remainder $F(\delta,\gamma)O(\ep)$ is bounded by $1/2$ uniformly in $\gamma$ in the above range (see Corollary \ref{cor_perturbation}).
\end{enumerate}
\end{example}
\begin{example}
\label{example_closing111}
We now consider the expansion of $\psi_{(1,1,1)}(0;\bze)$. In this case, there are six lattice paths. Similarly to the previous example, let
$$
F=F(\delta,f):=f(\omega_1+\omega_2+\omega_3)-f(0);\quad \Omega:=\omega_1+\omega_2+\omega_3.
$$
Then,
\begin{multline*}
\ep^{-3}F\psi_{(1,1,1)}(0;\bze)=\frac{1}{f(\Omega)-f(\omega_1+\omega_2)}\l\{\frac{1}{f(\Omega)-f(\omega_1)}+\frac{1}{f(\Omega)-f(\omega_2)}\r\}\\
+\frac{1}{f(\Omega)-f(\omega_1+\omega_3)}\l\{\frac{1}{f(\Omega)-f(\omega_1)}+\frac{1}{f(\Omega)-f(\omega_3)}\r\}\\+
\frac{1}{f(\Omega)-f(\omega_2+\omega_3)}\l\{\frac{1}{f(\Omega)-f(\omega_2)}+\frac{1}{f(\Omega)-f(\omega_3)}\r\}+\ep^{-3}FO(\ep^4).
\end{multline*}
Let $\omega_1+\omega_2=1+\delta$. One can now rewrite
\begin{multline*}
\ep^{-3}F\psi_{(1,1,1)}(0;\bze)=\frac{1}{f(\omega_3+\delta)-f(\delta)}\l\{\frac{1}{f(\omega_3+\delta)-f(\omega_1)}+\frac{1}{f(\omega_3+\delta)-f(\omega_2)}\r\}\\
+\frac{1}{f(\omega_3+\delta)-f(\omega_1+\omega_3)}\l\{\frac{1}{f(\omega_3+\delta)-f(\omega_1)}+\frac{1}{f(\omega_3+\delta)-f(\omega_3)}\r\}\\+
\frac{1}{f(\omega_3+\delta)-f(\omega_2+\omega_3)}\l\{\frac{1}{f(\omega_3+\delta)-f(\omega_2)}+\frac{1}{f(\omega_3+\delta)-f(\omega_3)}\r\}+\ep^{-3}FO(\ep^4)\\
=\frac{1}{f(\omega_3+\delta)-f(\omega_3)}\cdot\frac{2f(\omega_3+\delta)-f(\omega_2+\omega_3)-f(\omega_1+\omega_3)}{(f(\omega_3+\delta)-f(\omega_1+\omega_3))(f(\omega_3+\delta)-f(\omega_2+\omega_3))}\\+\text{remaining terms}+\ep^{-3}FO(\ep^4).
\end{multline*}
Now, the reader can easily check that the expression behaves similarly to the previous example: assuming $f(x)=\{x\}$ and 
$$
0<\omega_1+\omega_3<1,\quad 0<\omega_2+\omega_3<1,
$$
one has $\delta$ in the numerator of the first term. By using a bump function perturbation supported near $\omega_2+\omega_3$, one can assign this term a large range of values, with uniform bounds on the remaining terms and the remainder.
\end{example}
\section{Generic Cantor spectra}
In this section, we will continue to work in the setting of Proposition \ref{prop_main_kps3}. We already know that, in the case of sawtooth-type potentials for $d\ge 2$, one has infinitely many gaps. Moreover, one can force (any) finite number of principal gaps to be open by taking a small $\ep$. So far, the arguments fall short from establishing Cantor spectra. Our goal will be to apply quantitative perturbation theory to the function $f$ itself. All quasiperiodic objects operators in this section will be considered at zero phase, and we will drop this from the notation:
$$
H_f=H_f(0),\quad R_\bn=R_\bn(0),\quad \psi_\bn(\bk)=\psi_\bn(0;\bk).
$$
The following is the main result of this section and the final main result of the paper.
\begin{thm}
\label{th_main_cantor}
Under the assumptions of Proposition $\ref{prop_main_kps3}$, for every $\varkappa>0$, there exists $g=g(f,\omega,\varkappa)\in C^1[0,1]$, $\|g\|_{C^1}\le \varkappa$, such that the eigenvectors of $H_{f+g}$ have no zeros. As a consequence, all spectral gaps of the corresponding operator in the sense of Theorem $\ref{th_gap_labeling}$ will be open.
\end{thm}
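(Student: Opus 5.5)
The plan is a Baire category argument in $C^1$, where the density step is a first- or higher-order perturbative calculation based on Corollary \ref{cor_smoothness_g} and the expansions of Subsection \ref{subsection_first_order}.

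\textbf{Setup.} Let $\mathcal B$ be the closed ball of radius $\varkappa$ in the space of $1$-periodic $C^1$ functions. Shrinking $\varkappa$ and $\ep_0$ if needed (Remark \ref{rem_monotonicity_constant}), for every $g\in\mathcal B$ the function $f+g$ is sawtooth-type with monotonicity constant in $[1/2,2]$. Then Proposition \ref{prop_main_kps3} applies to $H_{f+g}$ uniformly in $g$. For each pair $(\bn,\bm)\in\Z^d\times\Z^d$ let
$$
U_{\bn,\bm}:=\{g\in\mathcal B\colon \psi_\bn^{f+g}(\bm)\neq 0\},
$$
where the eigenvectors are taken at zero phase. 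By Theorem \ref{th_gap_labeling}, it suffices to show that $\bigcap_{\bn,\bm}U_{\bn,\bm}$ is nonempty, since the pairs $(\bn,\bze)$ already give all gaps. I would show that each $U_{\bn,\bm}$ is relatively open and dense in $\mathcal B$. Baire's theorem on the complete metric space $\mathcal B$ then gives a dense $G_\delta$, and in particular some $g$ with $\|g\|_{C^1}\le\varkappa$.

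\textbf{Openness.} I would first check that $g\mapsto\psi_\bn^{f+g}(\bm)$ is continuous in the $C^1$ norm. One route is to apply Theorem \ref{th_canonical_expansion} with $N=1$ to $D=H_{f+g}$ and $A=V_{g'-g}$, the diagonal operator with entries $(g'-g)(\bk\cdot\omega)$. The remainder involves $R_\bn$, which is localization preserving uniformly in $g$ by Lemma \ref{lemma_localization_preserving} and Remark \ref{rem_uniformity}, applied to the uniformly localized $\psi$. This gives $|\psi^{f+g'}_\bn(\bm)-\psi^{f+g}_\bn(\bm)|\le C\|g'-g\|_\infty$, which yields openness.

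\textbf{Density.} Fix $g$ with $\psi_\bn(\bm)=0$ for $f_0:=f+g$. I would perturb along $f_0+th$, where $h$ is a bump of height $1$ supported in a tiny interval around a sampling point $\{\bj\cdot\omega\}$. By Corollary \ref{cor_smoothness_g} the function $t\mapsto\psi_\bn(\bm)$ is $C^\infty$. By \eqref{eq_psi_first_order}, its first derivative at $t=0$ is $-\<R_\bn V_h\psi_\bn,e_\bm\>$. Because $\{\bk\cdot\omega\}$ is dense, the support of $h$ meets other sampling points. However, by the Diophantine condition these points satisfy $|\bk-\bj|\gtrsim(\text{width})^{-1/\tau_\dio}$. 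Their contribution is therefore bounded by the exponential localization of $\psi_\bn$ and $R_\bn$, and can be made negligible by shrinking the support. The leading term is then
$$
-\psi_\bn(\bj)\,\<R_\bn e_\bj,e_\bm\>.
$$
Taking $\bj=\bn$, where $|\psi_\bn(\bn)|\approx 1$, it suffices that $(R_\bn)_{\bn\bm}\neq0$. In that case $\psi_\bn(\bm)\neq0$ for small $t\neq0$, and the $C^1$ norm of $th$ stays small for small $t$ even though the bump is narrow.

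\textbf{Main obstacle.} The hard step is the degenerate case where $\<R_\bn e_\bj,e_\bm\>\psi_\bn(\bj)=0$ for every $\bj$, so that all first derivatives vanish. Here I would pass to higher orders, using the perturbation $h=s_1h_1+s_2h_2$ with bumps at two sampling points $\bj_1,\bj_2$. I would then compute the mixed second derivative from Theorem \ref{th_lattice_diagram_expansion}, with the remainder controlled by Theorem \ref{th_smoothness}. Its leading term is a sum of diagrams of the form $(R_\bn)_{\bm\bj_1}(R_\bn)_{\bj_1\bj_2}\psi_\bn(\bj_2)$, plus attachment terms.

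To rule out vanishing of all such quantities I would use the identity $(H-E_\bn)R_\bn e_\bm=e_\bm$, which holds since $\psi_\bn(\bm)=0$, so $R_\bn e_\bm$ cannot vanish. Combining this with finiteness of range and iterating, the aim is a finite chain of single-point perturbations leading from $\bm$ to the support of $\psi_\bn$, mirroring the heuristic of Remark \ref{remark_heuristic}. Making this chain argument rigorous, with uniform control of the off-site contributions of each bump at every order, is where I expect the real work to lie.
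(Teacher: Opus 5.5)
Your Baire framework, your openness estimate and your non-degenerate density step are fine; the last is exactly Lemma \ref{lemma_make_nonzero}. The genuine gap is the degenerate case, which is the heart of the theorem and which you leave open. Two ideas are missing.

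First, nothing in your argument guarantees that a chain from $\mathrm{supp}\,\psi_\bn$ to $\bm$ through non-zero entries of $R_\bn$ exists at all. The identity $(H-E_\bn)R_\bn e_\bm=e_\bm$ only produces one non-zero entry in the row of $\bm$. What is needed is that the graph on $\Z^d$ with $\bk\sim\bl$ iff $\<R_\bn e_\bk,e_\bl\>\neq 0$ is connected (Lemma \ref{lemma_cyclic}). This holds because the spectral projections of $R_\bn$ are the projections onto the $\psi_{\bn'}$, since its eigenvalues $0$ and $(E_{\bn'}-E_\bn)^{-1}$ are distinct. Hence $H_f$ is a Borel function of $R_\bn$. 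A proper subset $A\subset\Z^d$ decoupled from its complement by $R_\bn$ would then also be decoupled by $H_f$, contradicting $\<H_f^N e_\bk,e_\bl\>\neq 0$ for $N=|\bk-\bl|_1$.

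Second, you do not need mixed higher derivatives, and you never establish that they are non-zero. Take a path $\bn=\bk_0,\bk_1,\dots,\bk_L=\bm$ in this graph and advance along it by successive first-order steps. Whenever $\psi_\bn(\bk_i)\neq 0$ but $\psi_\bn(\bk_{i+1})=0$, add $t$ times a narrow bump centred at $\bk_i\cdot\omega$. Lemma \ref{lemma_make_nonzero} then gives $\psi_\bn(\bk_{i+1})\neq 0$ for small $t\neq 0$. For $t$ small enough, the values already made non-zero stay non-zero by your own continuity estimate. The remaining bridges $\<R_\bn e_{\bk_l},e_{\bk_{l+1}}\>$ also stay non-zero, by Lemma \ref{lemma_iteration} (from \eqref{eq_Q_expansion} and dominated convergence). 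After at most $L$ steps of arbitrarily small total $C^1$ norm you reach $\psi_\bn(\bm)\neq 0$, which is the density of $U_{\bn,\bm}$.

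A higher-order version could probably be run along a shortest such path, where the absence of shortcut edges kills the competing orderings and attachment terms. It would still need the same connectivity input and is more laborious. With this fix your Baire argument finishes the proof. It is somewhat tidier than the paper's explicit iteration $f_j\to f_\infty$, which must impose summable smallness conditions to keep the eigenvectors convergent and the non-zero entries non-zero in the limit.
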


\begin{rem}
\label{remark_ck}
The reader can easily modify the proof and replace $C^1$ by $C^k$, or even by a norm that involves infinitely many derivatives.
\end{rem}

\begin{rem}
\label{remark_generic}
Theorem \ref{th_main_cantor} can be restated as a generic result. For example, let us consider the topology of uniform convergence on the sampling functions $f$. From the usual operator norm perturbation arguments, for every $\bn\in\Z^d\setminus\{\bze\}$, the set of sawtooth-type sampling functions for which the gap with label $\bn$ is open, is open. Theorem \ref{th_main_cantor} shows that it is also dense. We note that, while the addition of $g$ may decrease the monotonicity constant, the argument can be applied to $(1+\ep) f$ instead of $f$, and then require $\|g\|_{C^1}\ll\ep$.

As a consequence, the set of sawtooth-type sampling functions $f$ which have all gaps open is a dense $G_{\delta}$ subset in the topology of uniform convergence.

We note that the claim can be made even stronger if one allows variations in the monotonicity constant. For example, let us consider sampling functions of the form
\bee
\label{eq_monotonicity_modified}
f(y)-f(x)\ge \alpha(y-x),\quad\text{for}\quad 0\le x<y<1\quad \text{and some}\quad\alpha>1/2.
\ene
Within the class of bounded sampling functions satisfying \eqref{eq_monotonicity_modified}, the set of those with all spectral gaps open will be a dense $G_\delta$-set in $C^k$-topology for every $k\in \N$. We note that $f$ itself is not assumed to be smooth, and the introduction of $C^k$-topology results in the above set of sawtooth-type potentials being ``very disconnected''.
\end{rem}

Before proving Theorem \ref{th_main_cantor}, we will need some preparatory work. Our first goal would be to construct an arbitrary small perturbation of $f$ that will change the value of an individual eigenfunction at an individual point from zero to non-zero. Let $g\colon \R\to [0,1]$ be a bump function: 
$$
g(0)=1;\quad g(x)=0\quad\text{for}\quad |x|\ge 1/4;\quad |g'|\le 5.
$$
Let also 
$$
g_{\eta}(x):=g(x/\eta),\quad 0<\eta<1.
$$
Finally, we assume that $g_{\eta}$ is extended from $[-1/2,1/2]$ into $\R$ by $1$-periodicity.

Suppose that $f$ and the operator $H_f$ satisfy the assumptions of Proposition \ref{prop_main_kps3}. Then $f+tg$ is Lipschitz monotone with $c=1/2$ (see Remark \ref{rem_monotonicity_constant}) for, say, $|t|\le  \eta/4$. As a consequence, all results from the previous section apply uniformly in $t$ within the said range. Denote
$$
A_{\bk,\eta}:=\sum_{\bn\in \Z^d} g_\eta((\bn-\bk)\cdot\omega) \<e_\bn,\cdot\>e_\bn.
$$
Clearly, $A_{\bk,\eta}$ is diagonal. We also have
\bee
\label{eq_A_properties}
A_{\bk,\eta}e_\bk=e_\bk;\quad A_{\bk,\eta}e_\bn=0\quad\text{for}\quad 0<|\bn-\bk|\le C_\dio'|\eta|^{-1/\tau_\dio}.
\ene
\begin{lem}
\label{lemma_make_nonzero}
Under the assumptions of Proposition $\ref{prop_main_kps3}$, define $g$ as above and $R_{\bn}(x)$ as in Lemma $\ref{lemma_localization_preserving}$. Suppose that, for some $\bk,\bl\in \Z^d$, we have
\bee
\label{eq_q_assumption}
\<R_{\bn}e_{\bk},e_{\bl}\>\neq 0,\quad \psi_\bn(\bk)\neq 0.
\ene
Then, for small enough $\eta>0$, we have
\bee
\label{eq_q_conclusion}
\<R_\bn A_{\bk,\eta}\psi_\bn,e_{\bl}\>\neq 0.
\ene
As a consequence, the value $\psi_{\bn}(\bl)$ of the eigenfunction $\psi_\bn$ of $H_{f+tg}$ , considered as a scalar function of $t$, has a non-zero derivative at the origin.
\end{lem}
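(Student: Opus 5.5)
The approach is to show that, as $\eta\to 0$, the vector $A_{\bk,\eta}\psi_\bn$ converges in $\ell^2$ to $\psi_\bn(\bk)e_\bk$, and then to use first-order perturbation theory in $t$. We start from the decomposition
$$
A_{\bk,\eta}\psi_\bn=\psi_\bn(\bk)e_\bk+\sum_{\bm\neq\bk} g_\eta((\bm-\bk)\cdot\omega)\psi_\bn(\bm)e_\bm .
$$
By \eqref{eq_A_properties}, every term in the sum with $0<|\bm-\bk|\le M(\eta):=C_\dio'\eta^{-1/\tau_\dio}$ vanishes. The remaining terms have $|\bm-\bk|>M(\eta)$ and coefficients bounded in modulus by $|\psi_\bn(\bm)|$. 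So the remainder is $Q\psi_\bn$-like, where $Q$ is a diagonal operator with entries in $[0,1]$ supported on $|\bm-\bk|>M(\eta)$.

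Next we apply $R_\bn$. By Lemma \ref{lemma_localization_preserving}, $R_\bn$ is localization preserving. The vectors $\{Q\psi_\bn\}$, as $Q$ ranges over diagonal operators with entries in $[0,1]$, form a uniformly localized family, because $\psi_\bn$ is exponentially localized by \eqref{eq_uniformly_localized}. The argument of Proposition \ref{prop_localization_preserving} (boundedness of $R_\bn$ from $\ell^2_\gamma$ to $\ell^2_{\gamma'}$) then gives
$$
\|R_\bn(A_{\bk,\eta}\psi_\bn-\psi_\bn(\bk)e_\bk)\|\le C\|\one_{|\bm-\bk|>M(\eta)}\psi_\bn\|_{2,\gamma}\to0 .
$$
Taking the inner product with $e_\bl$, we get $\<R_\bn A_{\bk,\eta}\psi_\bn,e_\bl\>\to\psi_\bn(\bk)\<R_\bn e_\bk,e_\bl\>$, and this limit is nonzero by \eqref{eq_q_assumption}. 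Hence \eqref{eq_q_conclusion} holds for small $\eta$.

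For the consequence, we note that $H_{f+tg_\eta}=H_f+tA_{\bze,\eta}$ at zero phase. More generally the relevant diagonal perturbation is $A_{\bk,\eta}$, obtained by centering the bump at $\bk\cdot\omega$, i.e.\ using $g_\eta(\cdot-\bk\cdot\omega)$; I will read $g$ in the statement as this shifted bump. For $|t|\le\eta/4$, Proposition \ref{prop_main_kps3} applies uniformly in $t$, so the branch $\psi_\bn(t)$ is uniformly localized and non-orthogonal to $\psi_\bn(0)$. We then apply Theorem \ref{th_smoothness} (equivalently Corollary \ref{cor_smoothness_g}) with $D=H_f$, $A=A_{\bk,\eta}$, $\psi_0=\psi_\bn$ and normalization $\<\psi(t),\psi_\bn\>=1$. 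Its hypotheses hold because $A_{\bk,\eta}$ is diagonal and bounded and $R_\bn$ is localization preserving. The first-order expansion \eqref{eq_psi_first_order} gives
$$
\psi(t)=\psi_\bn-tR_\bn A_{\bk,\eta}\psi_\bn+O(t^2)
$$
componentwise, with the remainder controlled uniformly thanks to uniform localization. Therefore $\frac{d}{dt}\psi(t)(\bl)|_{t=0}=-\<R_\bn A_{\bk,\eta}\psi_\bn,e_\bl\>\neq0$.

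The lemma speaks of the unit-normalized eigenfunction of Proposition \ref{prop_main_kps3}, so we still have to pass from the normalization $\<\psi(t),\psi_\bn\>=1$ to it. The two differ by a smooth nonvanishing scalar factor, and changing it can add a multiple of $\psi_\bn(\bl)$ to the derivative. If $\psi_\bn(\bl)=0$, which is the case of interest for opening gaps, this added term vanishes and the conclusion is unaffected. I expect this to be the main point requiring care, together with checking that the $O(t^2)$ remainder is uniform. The first step is essentially a tail estimate.
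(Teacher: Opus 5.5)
Your proposal is correct and follows essentially the paper's own argument: the tail estimate showing $R_\bn(A_{\bk,\eta}\psi_\bn-\psi_\bn(\bk)e_\bk)\to 0$ via localization preservation, then the first-order expansion \eqref{eq_psi_first_order} in the normalization $\langle\psi(t),\psi_\bn\rangle=1$. Reading the bump as centered at $\bk\cdot\omega$ is the right interpretation.

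Your caveat about normalization is unnecessary, however. Since $\phi(t)-\psi_\bn\perp\psi_\bn$, one has $\|\phi(t)\|^2=1+\|\phi(t)-\psi_\bn\|^2=1+O(t^2)$. So passing to the real unit-normalized branch does not change the first derivative at $t=0$, and the conclusion holds even when $\psi_\bn(\bl)\neq 0$.
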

\begin{proof}
Since $\psi$ is localized, the family $A\psi$ is uniformly localized in the sense of Proposition \ref{prop_localization_preserving}, uniformly in $\eta$. Moreover, from \eqref{eq_A_properties} we have
$$
\l\<A_{\bk,\eta}\psi_{\bn}-\psi_\bn(\bk)e_\bk,e_\bm\r\>=0\quad \text{for}\quad|\bm-\bk|\le C_\dio'|\eta|^{-1/\tau_\dio}.
$$
For a fixed $\bk$ and small $\eta$ (depending on $\bk$), this would mean that $A_{\bk,\eta}\psi_{\bn}(\bze)-\psi_\bn(0;\bk)e_\bk$ is supported outside, say, a $C_\dio'|\eta|^{-1/\tau_\dio}/2$-neighborhood of the origin. From Proposition \ref{prop_localization_preserving}, we have
$$
R_\bn\l(A_{\bk,\eta}\psi_{\bn}-\psi_\bn(\bk)e_\bk\r)\to 0\quad \text{as}\quad \eta\to 0.
$$
In view of \eqref{eq_q_assumption}, this implies \eqref{eq_q_conclusion} for small $\eta$, and the claim regarding the eigenfunction now follows from \eqref{eq_psi_first_order} and Corollary \ref{cor_uniform_smoothness}.
\end{proof}
\begin{rem}
\label{remark_heuristic}
The heuristic calculation discussed in the Introduction is based on considering
$$
g(x)=\begin{cases}
    1,&x=0;\\
    0,&x\neq 0,
\end{cases}
$$
which corresponds to perturbing $f$ at a single point and the associated operator being $A_{\bk}=\<\cdot,e_\bk\>e_\bk$, up to normalization. In this case, \eqref{eq_q_conclusion} immediately follows from \eqref{eq_q_assumption}. However, \eqref{eq_psi_first_order} is clearly not applicable, since the perturbed eigenfunction is not known to exist.
\end{rem}
Lemma \ref{lemma_make_nonzero} can be understood as a way to ``transfer'' the mass from $\psi_\bn(0;\bk)$ into $\psi_\bn(0;\bl)$, using the non-zero matrix element $\<R_\bn(0)e_\bk,e_{\bl}\>$ as a ``bridge''. The next lemma establishes that one can reach every lattice point by a finite sequence of such bridges.
\begin{lem}
\label{lemma_cyclic}
Under the assumptions of Proposition $\ref{prop_main_kps3}$, fix $\bn\in \Z^d$ and consider the graph whose vertices are lattice points of $\Z^d$ and there is an edge between $\bk$ and $\bl$ if and only if $\<R_{\bn}e_{\bk},e_{\bl}\>\neq 0$. Then, for every $\bn\in \Z^d$ the corresponding graph is connected.
\end{lem}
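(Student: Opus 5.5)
The plan is to argue by contradiction. Assume the graph is disconnected, and let $S\subsetneq\Z^d$, $S\ne\emptyset$, be one of its connected components. Let $P_S$ denote the coordinate projection onto $\ell^2(S)$. The idea is to show that $R_\bn$ is block diagonal with respect to $\ell^2(S)\oplus\ell^2(\Z^d\setminus S)$, then transfer this block structure to the eigenbasis of $R_\bn$, hence to $H$ itself. This contradicts the fact that $\ep\Delta$ couples nearest neighbours.

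\emph{Step 1: $R_\bn$ commutes with $P_S$ on localized vectors.} By definition of a component, $\<R_\bn e_\bk,e_\bl\>=0$ whenever $\bk\in S$ and $\bl\notin S$, and also when $\bk\notin S$ and $\bl\in S$, since $R_\bn$ is self-adjoint. Let $v$ be localized. I would show $R_\bn P_S v=P_S R_\bn v$ by computing matrix elements:
$$
\<R_\bn P_S v,e_\bl\>=\sum_{\bk\in S}v(\bk)\<R_\bn e_\bk,e_\bl\>.
$$
Exchanging the sum with $R_\bn$ is justified by Lemma \ref{lemma_localization_preserving} and Proposition \ref{prop_localization_preserving}. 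Truncate $v$ to $|\bk|\le M$; the finite sum is fine, and the tail satisfies $R_\bn Q_M P_S v\to0$ because $P_S v$ is still localized. The resulting expression vanishes for $\bl\notin S$. Applying the same argument to $\Z^d\setminus S$ gives the commutation.

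\emph{Step 2: every eigenvector of $R_\bn$ lives on one side.} By \eqref{eq_Q_expansion}, $R_\bn\psi_{\bn'}=(E_{\bn'}-E_\bn)^{-1}\psi_{\bn'}$ for $\bn'\ne\bn$, and $R_\bn\psi_\bn=0$. These eigenvalues are pairwise distinct: $E$ is strictly monotone by \eqref{eq_E_monotonicity}, and the points $\{\bn'\cdot\omega\}$ are distinct by rational independence. Since $\{\psi_{\bn'}\}$ is an orthonormal basis, every eigenvalue of $R_\bn$, including $0$, is therefore simple. Each $\psi_{\bn'}$ is localized, so Step 1 shows that $P_S\psi_{\bn'}$ is an eigenvector of $R_\bn$ with the same eigenvalue, or is zero. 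By simplicity, $P_S\psi_{\bn'}=c\,\psi_{\bn'}$. Because $P_S$ is a projection, $c\in\{0,1\}$. Hence each eigenvector $\psi_{\bn'}$ is supported either in $S$ or in $\Z^d\setminus S$.

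\emph{Step 3: conclusion.} Write $H(0)=\sum_{\bn'}E_{\bn'}\<\psi_{\bn'},\cdot\>\psi_{\bn'}$. By Step 2 each term maps $\ell^2(S)$ to itself and $\ell^2(\Z^d\setminus S)$ to itself. It follows that $\<H e_\bk,e_\bl\>=0$ for $\bk\in S$, $\bl\notin S$. This can be checked directly on $e_\bk$, which is finitely supported, so no domain issues arise. On the other hand, for adjacent $\bk,\bl$ we have $\<H e_\bk,e_\bl\>=\ep\ne0$. So $S$ is closed under taking nearest neighbours, which forces $S=\Z^d$. This is a contradiction.

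The main obstacle is Step 1, namely making the commutation rigorous despite $R_\bn$ being unbounded. I would rely on the weighted-space boundedness from the proof of Proposition \ref{prop_localization_preserving} and work only with localized vectors throughout. This is enough, since all the $\psi_{\bn'}$ are uniformly localized by Proposition \ref{prop_main_kps3}.
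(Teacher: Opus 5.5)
Your proposal is correct and takes essentially the paper's route. The paper argues that the vanishing of $\<R_\bn e_\bk,e_\bl\>$ across a component $S$ passes to every Borel function of $R_\bn$, hence to $H$ and its powers, and then contradicts this with the nearest-neighbour hopping (it uses $\<H^N e_\bk,e_\bl\>\neq 0$ for $N=|\bk-\bl|_1$, where you use a single neighbouring pair $\bk\in S$, $\bl\notin S$). Your Steps 1 and 2 make explicit two points the paper leaves implicit: commutation with the unbounded $R_\bn$ is justified on localized vectors, and the spectrum of $R_\bn$ is simple, which is exactly what makes $H$ a function of $R_\bn$.
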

\begin{proof}
Suppose that $A\subset\Z^d$ is a non-trivial connected component of the above graph. Then
$$
\<R_{\bn}e_{\bk},e_{\bl}\>=0,\quad \forall\, \bk\in A,\quad\forall\,\bl\in \Z^d\setminus A.
$$ 
If true, the same must therefore hold for every polynomial and every Borel function of $R_{\bn}$, and consequently for the original operator $H_f$ and all its powers. However, one can easily check that, for every $\bk\neq \bl$, there exists $N$ such that $\<H(0)^N e_{\bk},e_{\bl}\>\neq 0$. In fact, it is easy to see that this would be true for $N=|\bk-\bl|_{1}$.
\end{proof}
In order to reach a particular lattice point, one may have to use a path of large length on the above graph, which will result in repeated applications of Lemma \ref{lemma_make_nonzero}. Since each application changes the operator, one has to be careful in order to make sure that the chosen path is still available after such change. The key observation is the following lemma.
\begin{lem}
\label{lemma_iteration}
Fix $g$ as above and let $R_{t,\bn}$ be the operator $R_\bn$ considered with the function $f+tg$. Then, for every $\bk,\bl\in \Z^d$, we have
$$
\<R_{t,\bn}e_{\bk},e_{\bl}\>\to \<R_{0,\bn}e_{\bk},e_{\bl}\>=\<R_{\bn}e_{\bk},e_{\bl}\>,\quad t\to 0.
$$
\end{lem}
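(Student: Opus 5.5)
We will prove the lemma by writing the matrix element of $R_{t,\bn}$ in the eigenbasis of $H_{f+tg}$, as in \eqref{eq_Q_expansion}:
$$
\<R_{t,\bn}e_\bk,e_\bl\>=\sum_{\bn'\neq\bn}\bigl(E^{t}_{\bn'}-E^{t}_{\bn}\bigr)^{-1}\<\psi^{t}_{\bn'},e_\bk\>\,\overline{\<\psi^{t}_{\bn'},e_\bl\>},
$$
where $E^t_{\bn'}$ and $\psi^t_{\bn'}$ are the eigenvalues and eigenvectors from Proposition \ref{prop_main_kps3} for $f+tg$, at zero phase. We then pass to the limit $t\to 0$ term by term, which requires two things: convergence of each term, and a summable majorant uniform in $t$, so that dominated convergence applies.

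\textbf{Termwise convergence.} By Corollary \ref{cor_smoothness_g}, each $E^t_{\bn'}$ and each $\psi^t_{\bn'}$ is smooth in $t$, hence continuous at $t=0$. For the eigenvectors this holds with the normalization of Theorem \ref{th_smoothness}. Each term is a rank one projection weighted by an eigenvalue factor, so it does not depend on the choice of phase of $\psi^t_{\bn'}$. Therefore every summand converges to the corresponding summand for $t=0$. Alternatively, continuity of $\psi^t_{\bn'}(\bk)$ follows from the first order formula \eqref{eq_psi_first_order}, because the remainder there is bounded uniformly in $t$.

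\textbf{Uniform domination.} This is the step we expect to be the main obstacle, although it should follow from uniformity of the estimates. For $|t|\le\eta/4$, the function $f+tg$ is Lipschitz monotone with constant $1/2$. By Remark \ref{rem_monotonicity_constant}, the bounds \eqref{eq_uniformly_localized} then hold uniformly in $t$, giving
$$
|\psi^t_{\bn'}(\bk)|\le \ep^{(1-\delta)|\bk-\bn'|}.
$$
Monotonicity of $E^t$ together with the Diophantine condition gives, as in the proof of Lemma \ref{lemma_localization_preserving},
$$
|E^t_{\bn'}-E^t_\bn|^{-1}\le 2C_\dio^{-1}|\bn-\bn'|^{\tau_\dio},
$$
again uniformly in $t$. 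The $\bn'$-th summand is therefore bounded by
$$
C|\bn-\bn'|^{\tau_\dio}\,\ep^{(1-\delta)(|\bk-\bn'|+|\bl-\bn'|)},
$$
which is summable over $\bn'\in\Z^d$ for fixed $\bk,\bl,\bn$, with a sum independent of $t$. Dominated convergence then gives the stated limit. One subtlety must be checked here: the covariance labeling $E_{\bn'}(x)=E(x-\bn'\cdot\omega)$ and the monotonicity of $E$ must persist for each $t$, so that the gap bound is uniform. This is guaranteed because Proposition \ref{prop_main_kps3} applies to $f+tg$ with constants uniform in $t$.
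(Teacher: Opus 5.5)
Your proposal is correct and follows the paper's own proof, which simply invokes the eigenbasis expansion \eqref{eq_Q_expansion} together with the dominated convergence theorem. You have made explicit what the paper leaves implicit: termwise continuity in $t$ via Corollary~\ref{cor_smoothness_g}, using that the rank one projections are phase-invariant, and a $t$-uniform summable majorant built from the uniform localization bounds and the Diophantine lower bound on eigenvalue differences.
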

\begin{proof}
Follows from the expansion \eqref{eq_Q_expansion} and the dominated convergence theorem.
\end{proof}
{\noindent\it Proof of Theorem $\ref{th_main_cantor}$}.
For $j=1,2,\ldots$ we will construct an operator $H^{(j)}=H_{f_j}$, where $f_j$  will be a small perturbation of $f_{j-1}$, and $f_0=f$. The eigenvalues and eigenfunctions of the corresponding operator will be denoted by $E_\bn^{(j)}$, $\psi_\bn^{(j)}$, respectively. We start from $j=0$.
\begin{enumerate}
	\item Pick $\bn,\bm$ with $|\bn|,|\bm|\le j+1$ and $\psi_{\bn}^{(j)}(\bm)=0$.
	\item Apply Lemma \ref{lemma_cyclic} and find a path on the graph from that lemma connecting $\bn$ with $\bm$. Apply Lemma \ref{lemma_make_nonzero} in order to make all values of $\psi_\bn^{(j)}(\bk)$ at the points $\bk$ of the path to be non-zero. At each step, make sure that $t$ is chosen small enough so that the entries $\psi_\bn^{(j)}(\bm)$ with $|\bm|,|\bn|\le j+1$ that are non-zero remain non-zero. Moreover, at each step use Lemma \ref{lemma_iteration} in order to make sure that the matrix elements of the corresponding operator $R$ remain non-zero along the path.
	\item As a result of the previous step, we have transformed $\psi_\bn^{(j)}(\bm)$ from zero to non-zero. Repeat with all remaining $\bn,\bm$ with $|\bn|,|\bm|\le j+1$. Let $f_{j+1}$ be the result of all the above procedures.
	\item Note that the number of applications of Lemma \ref{lemma_make_nonzero} is finite at each step, and during each application we can impose, if needed, additional conditions on the smallness of $t$. These additional conditions will be summarized separately.
\end{enumerate}
The following are the additional conditions.
\begin{enumerate}
	\item For $|\bn|\le j+1$, we have 
$$
\|\psi_{\bn}^{(j+1)}-\psi_{\bn}^{(j)}\|+|E_\bn^{(j+1)}-E_\bn^{(j)}|+\|H^{(j+1)}-H^{(j)}\|\le 10^{-j}.
$$
\item Let $a_{\bn}^{(j)}(\bm)$ be the first obtained non-zero value of $\psi_\bn^{(j)}(\bm)$ with $|\bm|,|\bn|\le j+1$, either due to the eigenfunction being non-zero upon reaching the corresponding step, or as the result of the procedure. For $i\ge j$, we additionally impose
$$
|\psi_\bn^{(i+1)}(\bm)-\psi_\bn^{(i)}(\bm)|\le 10^{-i}|a_{\bn}^{(j)}(\bm)|.
$$
\item $\|f_{j+1}-f_j\|_{C^1[0,1]}\le \varkappa 10^{-j}$, where $\varkappa$ is the small parameter from the original statement of Theorem \ref{th_main_cantor}.
\end{enumerate}
At every step, these requirements present finitely many conditions on the smallness of $t$ at every application of Lemma \ref{lemma_make_nonzero}, and the reader can easily check that they can be imposed. As a result, each sequence of eigenfunctions $\psi^{(j)}$ converges in $\ell^2$, and the operators $H^{(j)}$ converge in the norm topology. By inspecting the eigenvalue equation, one can easily check
$$
H^{(\infty)}\psi_\bn^{(\infty)}=E_{\bn}^{(\infty)}\psi_\bn^{(\infty)},
$$
where the $\infty$ index denotes the limiting objects. Clearly, $f_{\infty}$ is a small $C^1$-perturbation of $f$, and the limiting eigenfunctions do not vanish anywhere.\,\,\qed

\begin{thebibliography}{99}
\bibitem{velocity}Abdul-Rahman~H., Fillman~J., Fischbacher~C., Liu~W., {\it Sharp Polynomial Velocity Decay Bounds for Multidimensional Periodic Schr\"odinger Operators}, preprint (2025), \url{https://arxiv.org/abs/2509.04381}.
\bibitem{ABD}Avila~A., Bochi~J., Damanik~D., {\it Opening gaps in the spectrum of strictly ergodic Schr\"odinger operators}. J. Eur. Math. Soc. (JEMS) 14 (2012), 61 -- 106.
\bibitem{ten_martini}Avila~A., Jitomirskaya~S., {\it The Ten Martini Problem}, Ann. of Math. (2) 170 (2009), no. 1, 303 -- 342.
\bibitem{ten_martini_dry}Avila~A., You~J., Zhou~Q., {\it Dry Ten Martini Problem in the non-critical case}, preprint (2024), \url{https://arxiv.org/abs/2306.16254}.
\bibitem{beckus}Band~R., Beckus~S., Loewy~R., {\it The Dry Ten Martini Problem for Sturmian Hamiltonians}, preprint (2024), \url{https://arxiv.org/abs/2402.16703}.
\bibitem{Bellissard} Bellissard~J., Lima~R., Scoppola~E., {\it Localization in $\nu$-dimensional incommensurate structures}, Comm. Math. Phys. 88 (1983), no. 4, 465 -- 477.
\bibitem{Bloch}Bloch~C., {\it Sur la th\'eorie des perturbations des \'etats li\'es}, Nuclear Physics 6 (1958), 329 -- 347.
\bibitem{BF}Borgnia~D., Faust~M., {\it The Dry Ten Martini Problem at Criticality}, preprint (2026), \url{https://arxiv.org/abs/2112.06869v2}.
\bibitem{Shi}Cao~H., Shi~Y., Zhang~Z., {\it Localization for Lipschitz monotone quasi-periodic Schr\"odinger operators on $\mathbb Z^d$ via Rellich functions analysis}, Commun. Math. Phys. 406 (2025), article no. 102.
\bibitem{ChuSi}Chulaevsky~V., Sinai~Y., {\it Anderson localization for the 1-D discrete Schr\"odinger operator with two-frequency potential}. Commun. Math. Phys. 125 (1989), 91 -- 112.

\bibitem{Craig_pure_point}Craig~W., {\it Pure point spectrum for discrete almost periodic Schr\"odinger operators}, Comm. Math. Phys. 88 (1983), no. 1, 113 -- 131.
\bibitem{CS_ids}Craig, W., Simon, B.: {\it Log H\"older continuity of the integrated density of states for stochastic Jacobi matrices}. Comm. Math. Phys. 90 (1983), 207 -- 218.
\bibitem{DF_label}Damanik~D., Fillman~J., {\it Gap Labelling for Discrete One-Dimensional Ergodic Schr\"odinger Operators}. In: Brown, M., et al. From Complex Analysis to Operator Theory: A Panorama. Operator Theory: Advances and Applications, vol 291. Birkh\"auser, Cham.
\bibitem{DL}Damanik~D., Li~L., {\it Opening Gaps in the Spectrum of Strictly Ergodic Jacobi and CMV Matrices}, J. Funct. Anal. 289 (2025), no. 12, article 111182.
\bibitem{DFG}Damanik~D., Fillman~J., Gorodetski~A., {\it Multidimensional almost-periodic Schr\"odinger operators with Cantor spectrum}, Ann. Henri Poincar\'e 20 (2019), 1393 -- 1402.
\bibitem{faust_k}Faust~M., Kachkovskiy~I., {\it Absence of flat bands for discrete periodic graph operators with generic potentials}, preprint (2025), \url{https://arxiv.org/abs/2509.01927}.
\bibitem{GJY}Ge~L., Jitomirskaya~S., You~J., {\it Kotani theory, Puig's argument, and stability of The Ten Martini Problem}, preprint (2024), \url{https://arxiv.org/abs/2308.09321}.
\bibitem{Ge2}Ge~L., Jitomirskaya~S., You~J., {\it The Robust Ten Martini Problem}, preprint (2026), https://arxiv.org/pdf/2609.15812.
\bibitem{GSW}Goldstein~M., Schlag~W., Voda~M., {\it On the spectrum of multi-frequency quasi-periodic Schr\"odinger operators at large coupling}, Invent. Math. 217 (2019), 603--701.
\bibitem{HLQZ}Hellfer~B., Liu~Q., Qu~Y., Zhou~Q., {\it Cantor spectrum for multidimensional quasi-periodic Schr\"odinger operators}, Forum of Mathematics, Sigma, Volume 14 (2026), e90.
\bibitem{JK}Jitomirskaya~S., Kachkovskiy~I., {\it All couplings localization for quasiperiodic operators with Lipschitz monotone potentials}, J. Eur. Math. Soc. 21 (2019), no. 3, 777 -- 795.
\bibitem{JK2}Jitomirskaya~S., Kachkovskiy~I., {\it Sharp arithmetic localization for quasiperiodic operators with monotone potentials}, preprint (2024), \url{https://arxiv.org/abs/2407.00703}.
\bibitem{KPS}Kachkovskiy~I., Parnovski~L., Shterenberg~R., {\it Convergence of perturbation series for unbounded monotone quasiperiodic operators}, Adv. Math. 409 (2022), part B, 108647.
\bibitem{KPS2}Kachkovskiy~I., Parnovski~L., Shterenberg~R., {\it On gaps in the spectra of quasiperiodic Schr\"odinger operators with discontinuous monotone potentials}, Int. Math. Res. Notes vol. 2025 (2025), Issue 7.
\bibitem{KPS3}Kachkovskiy~I., Parnovski~L., Shterenberg~R., {\it Perturbative diagonalization and spectral gaps of quasiperiodic operators on $\ell^2(\mathbb Z^d)$ with monotone potentials}, Comm. Math. Phys vol. 406 (2025), article number 125.
\bibitem{KKPS}Kachkovskiy~I., Krymski~S., Parnovski~L., Shterenberg~R., {\it Perturbative diagonalization for Maryland-type quasiperiodic operators with flat pieces}, J. Math. Phys. 62 (2021), 063509.
\bibitem{Cantor}Kachkovskiy~I., Parnovski~L., Shterenberg~R., {\it On gaps in the spectra of quasiperiodic Schr\"odinger operators with discontinuous monotone potentials}, IMRN, Vol. 2025, Issue 7, April 2025.
\bibitem{JK1}Jitomirskaya~S., Kachkovskiy~I., {\it All couplings localization for quasiperiodic operators with Lipschitz monotone potentials}, J. Eur. Math. Soc. 21 (2019), no. 3, 777 -- 795.
\bibitem{Kato}Kato~T., Perturbation Theory for Linear Operators, Springer, 1995.
\bibitem{Li}Li~X., Xu~D., Zhou~Q., {\it Monotonicity, global symplectification and the stability of Dry Ten Martini Problem}, preprint (2026), \url{https://arxiv.org/abs/2601.02222}.
\bibitem{Lippmann}Lippmann, B. A.; Schwinger, J. {\it Variational Principles for Scattering Processes. I}, Phys. Rev. Lett. 79 (1950), no. 3, 469 -- 480.
\bibitem{Sinai}Sinai~Y., {\it Anderson localization for one-dimensional difference Schr\"odinger operator with quasiperiodic potential}, J. Stat. Phys. 46 (1987), nos. 5/6, 861 -- 909.
\end{thebibliography}
\end{document}